\newtheorem{theorem}{Theorem}
\newtheorem{claim}[theorem]{Claim}
\newtheorem{conjecture}[theorem]{Conjecture}
\newtheorem{corollary}[theorem]{Corollary}
\newtheorem{example}[theorem]{Example}
\newtheorem{lemma}[theorem]{Lemma}
\newtheorem{problem}[theorem]{Problem}
\newtheorem{remark}[theorem]{Remark}
\begin{document}

\title{Polynomial Method in Tilings}
\author{Peter Horak$^{1}$ \ \ Dongryul Kim$^{2}$ \\
\normalsize$^{1}$ School of Interdisciplinary Arts \& Sciences \\
\normalsize University of Washington, Tacoma, WA \\
\normalsize e-mail: horak@uw.edu \\
\normalsize $^{2}$ Harvard University, Cambridge, MA \\
\normalsize e-mail: dkim04@college.harvard.edu }
\maketitle

\begin{abstract}
\noindent In this paper we introduce a new algebraic method in tilings. Combining this method with Hilbert's Nullstellensatz we obtain a necessary condition for tiling $n$-space by translates of a cluster of cubes. Further, the polynomial method will enable us to show that if there exists a tiling of $n$-space by translates of a cluster $V$ of prime size then there is a lattice tiling by $V$ as well. Finally, we provide supporting evidence for a conjecture that each tiling by translates of a prime size cluster $V$ is lattice if $V$ generates $n$-space.
\end{abstract}

\section{Introduction}

A cluster in $\mathbb{R}^{n}$ is the union of unit cubes centered at integer points with their sides parallel to coordinate axis; we note that a cluster does not have to be connected. This paper is devoted to tilings of $\mathbb{R}^{n}$ by translates of a cluster.\bigskip 

An interest in tilings of $\mathbb{R}^{n}$ by cubes goes back to a conjecture raised by Minkowski \cite{Minkowski} in 1904; the conjecture stemmed from his work on geometry of numbers and quadratic forms.

\begin{conjecture}[Minkowski] 
Each lattice tiling of $\mathbb{R}^{n}$ by cubes contains twins, a pair of cubes that share whole $n-1$ dimensional face. 
\end{conjecture}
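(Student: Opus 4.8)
The plan is to convert this geometric statement into a purely algebraic one about factorizations of finite abelian groups, which is the route taken by Haj\'os. First I would record the standard dictionary: a lattice tiling of $\mathbb{R}^n$ by unit cubes is given by a lattice $\Lambda$ with $\det \Lambda = 1$ for which $[0,1)^n + \Lambda = \mathbb{R}^n$ is a partition, and two cubes $[0,1)^n + \lambda$ and $[0,1)^n + \mu$ form a pair of twins exactly when $\lambda - \mu = \pm e_i$ for some coordinate vector $e_i$. Since $\Lambda$ is a group, the existence of twins is equivalent to $e_i \in \Lambda$ for some $i$, so the conjecture becomes the assertion that every cube-tiling lattice $\Lambda$ contains one of the standard basis vectors.

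Next I would normalize $\Lambda$. Applying unimodular changes of lattice basis to bring a generating matrix of $\Lambda$ into triangular form, and using the no-overlap condition (for every nonzero $\lambda \in \Lambda$ some coordinate satisfies $|\lambda_i| \ge 1$) together with $\det \Lambda = 1$, one reduces to the case in which $\Lambda$ is generated by vectors with rational entries. Clearing denominators by a common $m$ and reducing modulo $m\mathbb{Z}^n$ then turns the tiling condition into a \emph{direct factorization} of the finite abelian group $G = (\mathbb{Z}/m\mathbb{Z})^n$ (or a suitable quotient) as $G = A_1 A_2 \cdots A_n$, where each factor is a \emph{simplex} $A_i = \{0, a_i, 2a_i, \ldots, (q_i-1)a_i\}$ consisting of consecutive multiples of a single element. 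Here \emph{direct} means every $g \in G$ has a unique expression $\sum_i j_i a_i$ with $0 \le j_i < q_i$, and a twin in the original tiling corresponds precisely to a factor $A_i$ being a genuine subgroup, i.e. to $q_i a_i = 0$.

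With this dictionary in hand, the conjecture reduces to the factorization theorem of Haj\'os: \emph{if a finite abelian group admits a direct factorization into simplices, then at least one simplex is a subgroup.} I would prove this by passing to the group ring $\mathbb{C}[G]$, where the factorization becomes the identity $\sum_{g\in G} g = \prod_{i=1}^n \bigl(1 + a_i + a_i^2 + \cdots + a_i^{q_i-1}\bigr)$, and then analyzing this relation through the characters of $G$ together with an induction on $|G|$ that first reduces to the case of a $p$-group and then exploits the cyclotomic structure of the individual factors. Translating a subgroup factor back through the normalization yields $e_i \in \Lambda$, producing the required pair of twins.

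The main obstacle is the factorization theorem itself: the reduction steps are essentially bookkeeping (with the passage to rational lattices requiring a short compactness argument, since the tiling condition is closed), but showing that some simplex must collapse to a subgroup is where the genuine difficulty of Minkowski's conjecture resides. A naive character computation shows that each cyclotomic-type factor forces a vanishing, yet organizing these vanishings so that they pin down $q_i a_i = 0$ for a \emph{single} index $i$ — rather than merely some joint degeneracy of the factors — is exactly what demands the careful $p$-group induction (or R\'edei's refinement). Keeping the normalization honest, so that the algebraic conclusion really recovers a standard basis vector and not just an abstract subgroup factor, is the remaining delicate point of the plan.
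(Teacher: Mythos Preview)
The paper does not contain a proof of this statement. Minkowski's conjecture is quoted in the introduction purely as historical background; the paper simply records that it was settled by Haj\'os in 1941 via the ``splitting of groups'' method and then moves on to other matters. There is therefore no in-paper argument to compare your proposal against.

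As for the proposal itself: what you have written is an accurate outline of the classical Haj\'os route (lattice tiling $\Rightarrow$ rational lattice $\Rightarrow$ direct factorization of a finite abelian group into cyclic ``simplices'' $\Rightarrow$ one simplex is a subgroup $\Rightarrow$ some $e_i\in\Lambda$), and you correctly identify that the entire weight sits on the Haj\'os factorization theorem. But your text is a plan, not a proof: the decisive step --- showing that in any direct factorization $G=A_1\cdots A_n$ by simplices some $A_i$ is a subgroup --- is only gestured at (``characters of $G$ together with an induction on $|G|$''), and the reduction to a rational lattice is likewise waved through with a one-line compactness remark. Both of these are genuine pieces of work; in particular the $p$-group induction in Haj\'os's argument is intricate and is precisely the content that earned the 1941 paper its reputation. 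So your proposal names the right strategy but does not supply the substance; if the intent was to give a self-contained proof, the factorization theorem must actually be proved, and if the intent was only to reduce to a citation, then you should state Haj\'os's theorem precisely and cite it rather than sketch it.
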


\noindent Minkowski's conjecture was settled in the affirmative in 1941 by Haj\'{o}s \cite{Hajos} who introduced in that paper a powerful algebraic method called ``\emph{splitting of groups}." We note that although a cluster is a very special type of a tile, it provides a simplest known counterexample to part (b) of the 18th problem of Hilbert:

\begin{problem}
If congruent copies of a polyhedron $P$ tile $\mathbb{R}^{3}$, is there a group of motions so that copies of $P$ under this group tile $\mathbb{R}^{3}$?
\end{problem}

\noindent In other words, the second part of the problem asks whether there exists a polyhedron, which tiles 3-dimensional Euclidean space but does not admit an isohedral (tile-transitive) tiling. It is shown in \cite{BE} that there is a periodic tiling of $\mathbb{R}^{2}$ by a cluster depicted in Fig.1, but no isohedral tiling of $\mathbb{R}^{2}$ exists.

\begin{figure}[h]
\centering
\begin{tikzpicture}[scale=0.8]
\draw (0,0) -- (2,0) -- (2,1) -- (3,1) -- (3,0) -- (5,0) -- (5,1) -- (4,1) -- (4,2) -- (1,2) -- (1,1) -- (0,1) -- cycle;
\draw (1,0) -- (1,1) -- (2,1) -- (2,2);
\draw (3,2) -- (3,1) -- (4,1) -- (4,0);
\end{tikzpicture} \\
\vspace{1em}
Fig.1.
\end{figure}
%

In this paper we deal only with face-to-face (=regular) tilings of $\mathbb{R}^{n}$ by a cluster $C$. It is not difficult to see that such a tilings can be seen as a tiling of $\mathbb{Z}^{n}$ by translates of a subset $V$ comprising centers of cubes in $C$. Thus, from now on, by a tile we will mean a set $V\subset \mathbb{Z}^{n}$. Throughout the paper we assume that $0 \in V$, and we deal exclusively with tilings $\mathcal{T}$ of $\mathbb{Z}^{n}$ by \emph{translates} of $V$; i.e., 
\begin{equation*}
\mathcal{T}=\{V+l ; l \in \mathcal{L}\}.
\end{equation*}
\noindent As $0 \in V$, we will identify each tile $V + l$ in $\mathcal{T}$ with $l$. A tiling $\mathcal{T}$ is termed periodic (lattice), if $\mathcal{L}$ is periodic (lattice). Since $\mathbb{Z}^{n}$ is a group, the fact that $V$ tiles $\mathbb{Z}^{n}$ can be expressed as 
\begin{equation*}
\mathbb{Z}^{n}=V+\mathcal{L},
\end{equation*}
meaning that each element of $\mathbb{Z}^{n}$ can be written in a unique way as the sum of an element in $V$ and an element of $\mathcal{L}$, and also as
\begin{equation*}
\lvert (-V+x) \cap \mathcal{L} \rvert = 1
\end{equation*}
for each $x \in \mathbb{Z}^{n}$. In the area of tilings of $\mathbb{Z}^{n}$ by translates of a set $V$ most research is oriented towards solving several long-standing conjectures.

\begin{conjecture}[Lagarias-Wang 1996, \cite{LW}]
If $V$ tiles $\mathbb{Z}^{n}$, then $V$ admits a periodic tiling.
\end{conjecture}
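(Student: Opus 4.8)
The plan is to convert the purely combinatorial tiling relation $\mathbb{Z}^{n}=V+\mathcal{L}$ into an algebraic identity and then hunt for structure inside it. To each finite $V$ I would attach its \emph{mask polynomial} $V(x)=\sum_{v\in V}x^{v}$, where $x=(x_{1},\dots ,x_{n})$ and $x^{v}=x_{1}^{v_{1}}\cdots x_{n}^{v_{n}}$, and encode the tiling set by the formal series $L(x)=\sum_{l\in\mathcal{L}}x^{l}$. The requirement that each $z\in\mathbb{Z}^{n}$ has a unique representation $z=v+l$, i.e. $\lvert(-V+z)\cap\mathcal{L}\rvert=1$, then becomes the single identity $V(x)\,L(x)=\sum_{z\in\mathbb{Z}^{n}}x^{z}$ in the ring of formal Laurent series. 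A \emph{periodic} tiling is one whose set $\mathcal{L}$ is invariant under a full-rank sublattice $\Lambda\subseteq\mathbb{Z}^{n}$, so the target is to produce such a $\Lambda$; equivalently, one must show that $V$ tiles some finite quotient $G=\mathbb{Z}^{n}/\Lambda$, choosing $\Lambda$ large enough that $V$ embeds into $G$.

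I would next push the whole problem into a finite abelian group. A $\Lambda$-periodic tiling descends to a relation $\overline{V}\oplus\overline{\mathcal{L}}_{0}=G$ for the images in $G$, and taking characters turns this into $\overline{V}(\chi)\,\overline{\mathcal{L}}_{0}(\chi)=0$ for every nontrivial $\chi$, together with $\lvert V\rvert\cdot\lvert\overline{\mathcal{L}}_{0}\rvert=\lvert G\rvert$ at the trivial character. Thus a periodic tiling exists exactly when the characters of some finite $G$ can be partitioned into those killing $\overline{V}$ and those killing $\overline{\mathcal{L}}_{0}$. This is where the polynomial method and Hilbert's Nullstellensatz enter: from the existence of the infinite tiling one evaluates $V(x)$ at roots of unity to extract cyclotomic divisibility conditions on $V(x)$, and the Nullstellensatz certifies that the polynomials witnessing these conditions lie in the ideal generated by $V(x)$ together with the torus relations $x_{i}^{m}-1$.

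The construction step is where prime size does the real work. When $\lvert V\rvert=p$ is prime, a vanishing character sum $V(\chi)=\sum_{v\in V}\chi(v)=0$ is a vanishing sum of only $p$ roots of unity, and by the rigidity of such sums it must be (a rotation of) a full set of $p$-th roots of unity. Combining this rigidity across all characters, the tiling forces the vanishing locus of $\overline{V}$ to be the nontrivial part of an order-$p$ subgroup of $\widehat{G}$, and that subgroup pins down a sublattice $\Lambda$ outright. Reconstructing $\overline{\mathcal{L}}_{0}$ as the annihilator then produces not merely a periodic but a \emph{lattice} tiling, the stronger conclusion advertised in the abstract. For general $V$ I would run the same machine, using the cyclotomic factorization of $V(x)$ to propose candidate periods and then to seek a common refinement $\Lambda$.

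The hard part, and where I expect this to stall in full generality, is precisely that last refinement. The Nullstellensatz delivers ideal membership, not an explicit structured tiling, and when $\lvert V\rvert$ is composite the cyclotomic factors of $V(x)$ may be split among independent directions with no local constraint forcing a shared period. What actually excludes a genuinely aperiodic $\mathcal{L}$ is the global group structure of the tiling equation, and it is unclear that finitely many local character identities capture it; this is exactly the gap that the primality hypothesis closes. I therefore expect the polynomial method to settle the prime-size case cleanly, while the passage from the necessary cyclotomic conditions to an honest periodic tiling for arbitrary $V$ remains the genuine obstacle and would likely need a compactness argument on the space of all tilings rather than a single algebraic identity.
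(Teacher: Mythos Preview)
The statement you are addressing is a \emph{conjecture}, not a theorem, and the paper does not prove it. Immediately after stating it, the paper says: ``It is easy to see that the conjecture is true in the 1-dimensional case, but it is still open even for $n=2$.'' There is therefore no ``paper's own proof'' to compare your attempt against.

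Your write-up is not a proof either, and to your credit you recognize this: the final paragraph explicitly identifies the gap, namely that for composite $\lvert V\rvert$ the cyclotomic/character information extracted from the tiling equation need not coalesce into a single full-rank period lattice, and that ``a compactness argument on the space of all tilings'' would likely be needed. That is an honest assessment of a genuinely open problem, but it means what you have written is a research outline, not a proof proposal that can be evaluated for correctness.

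What the paper actually establishes in this direction is much narrower: for $\lvert V\rvert=q$ prime it shows (Theorem~\ref{period}) that every tiling is periodic with explicit periods $q(\mathbf{v}-\mathbf{w})$, recovering Szegedy's result via the polynomial map $T$, and (Theorem~\ref{T1}) that a tiling by a prime-size $V$ whose nonzero elements generate $\mathbb{Z}^{n}$ implies a lattice tiling. Your sketch of the prime case is in the same spirit---vanishing sums of $p$ roots of unity are rigid---but the paper's argument is more direct: it works with the linear functional $T$ and the congruence $T(MQ_{V}(x_{1}^{q},\dots,x_{n}^{q}))\equiv T(MQ_{V}^{q})\pmod q$ rather than passing through characters of a finite quotient. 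If you intended to address only the prime case, you should state that explicitly and tighten the argument; if you intended the full conjecture, no proof exists and your outline does not supply one.
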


\noindent It is easy to see that the conjecture is true in the 1-dimensional case, but it is still open even for $n=2$. It is known though that, for $n=2$, the conjecture is true for polyominoes, cf. \cite{BE}, i.e., if the corresponding cluster of cubes in $\mathbb{R}^{2}$ is connected. Moreover, Szegedy \cite{Szegedy} proved the conjecture in the case when $V$ is of a prime size. Further, Nivat \cite{NIVAT} conjectured, that if $V$ satisfies a complexity assumption, then each tiling of $\mathbb{Z}^{2}$ by $V$ is periodic. We note that the famous Keller's conjecture \cite{Keller} saying that each tiling of $\mathbb{R}^{n}$ by cubes contains a pair of twin cubes was proved to be false for all $n \ge 8$, but it is still open for $n=7$. \bigskip 

Our research has been motivated by two conjectures stated below. The first of them is likely the most famous conjecture in the area of error-correcting Lee codes:

\begin{conjecture}[Golomb-Welch 1969, \cite{GW}] The Lee sphere 
\begin{equation*}
S_{n,r} = \{ \mathbf{x} \in \mathbb{Z}^{n} : \lvert x_{1} \rvert + \dots +\lvert x_{n} \rvert \leq r \}
\end{equation*}
does not tile $\mathbb{Z}^{n}$ for $n \geq 3$ and $r \geq 2$.
\end{conjecture}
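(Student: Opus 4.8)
The plan is to work through the mask-polynomial encoding that underlies the polynomial method of this paper. After translating $S_{n,r}$ into the nonnegative orthant, associate to it the Laurent polynomial
\[
f(\mathbf{z}) \;=\; \sum_{\mathbf{v}\in S_{n,r}} \mathbf{z}^{\mathbf{v}}, \qquad \mathbf{z}^{\mathbf{v}} = z_1^{v_1}\cdots z_n^{v_n},
\]
and read the tiling condition spectrally: if $\mathbb{Z}^n = S_{n,r}\oplus\mathcal{L}$, then the transform $f$ and the complementary transform $g(\mathbf{z})=\sum_{\ell\in\mathcal{L}}\mathbf{z}^{\ell}$ satisfy $f(\chi)\,g(\chi)=0$ at every nontrivial character $\chi$ on the algebraic torus $(\mathbb{C}^{\ast})^{n}$. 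My first move is to interpret the Nullstellensatz-based necessary condition of this paper as exactly such a vanishing requirement: a tiling forces $f$ to vanish at every (torsion) character not annihilated by $g$, so it suffices to show that the zero locus of $f$ is too small or too rigid to supply that many zeros, for all $n\ge 3$ and $r\ge 2$.

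The computational heart is evaluating $f$ at a character $\chi_{\boldsymbol\theta}(\mathbf{v}) = \exp\!\big(2\pi i\,\langle\boldsymbol\theta,\mathbf{v}\rangle\big)$. Because the $\ell_1$-ball couples the coordinates, $f$ does not factor over the axes; instead I would begin from the full generating function
\[
\sum_{\mathbf{x}\in\mathbb{Z}^n} t^{\,\|\mathbf{x}\|_1}\,\mathbf{z}^{\mathbf{x}} \;=\; \prod_{j=1}^{n}\frac{1-t^2}{(1-t z_j)(1-t z_j^{-1})},
\]
and extract the truncation $\|\mathbf{x}\|_1\le r$ as a single contour (residue) integral in the variable $t$. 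This presents $f(\chi_{\boldsymbol\theta})$ as a one-variable residue whose vanishing set I can analyze directly, and the target is to show that this set cannot meet all of the characters that any admissible tiling, with any period lattice, would require.

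The hard part --- and the reason the Golomb--Welch conjecture has stood open since 1969 --- is uniformity in the two parameters, compounded by the fact that periodicity of the tiling is itself only conjectural (Lagarias--Wang), so the passage to a finite character group $\mathbb{Z}^n/M$ is not free. For any fixed small $(n,r)$ the common-zero analysis is a finite, in principle decidable computation; but the truncation operator $[t^{\le r}]$ that carves $S_{n,r}$ out of the $\ell_1$ generating function destroys the product structure, so the residue expression for $f(\chi_{\boldsymbol\theta})$ admits no evident factorization and its zero locus grows in complexity with both $n$ and $r$. A single algebraic certificate valid simultaneously for the whole infinite family therefore seems out of reach by this route, and I would aim realistically at what the paper's necessary condition can actually deliver: an obstruction that excludes the Lee sphere for infinite families of $(n,r)$, leaving the full conjecture as the genuinely difficult residue.
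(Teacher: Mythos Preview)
This statement is the Golomb--Welch \emph{conjecture}: it is presented in the paper as an open problem, not as a theorem, and the paper contains no proof of it. Indeed, immediately after stating it the paper remarks that ``the conjecture is far from being solved,'' and later notes that applying the polynomial method to Golomb--Welch ``will require additional research.'' So there is no proof in the paper to compare your proposal against.

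Your proposal is not a proof either, and to your credit you essentially say so in the final paragraph: you concede that a uniform algebraic certificate for all $n\ge 3$, $r\ge 2$ ``seems out of reach by this route'' and that you would ``aim realistically'' only at excluding infinite subfamilies, ``leaving the full conjecture as the genuinely difficult residue.'' That is an honest assessment, but it means what you have written is a research outline, not a proof. The concrete gaps are: (i) you invoke the spectral condition $f(\chi)g(\chi)=0$ at nontrivial characters, which presupposes periodicity of the tiling --- itself open in dimension $\ge 2$ (the Lagarias--Wang conjecture you cite); (ii) even granting periodicity, you give no argument that the zero locus of $f$ actually fails to contain the required characters for any specific $(n,r)$, let alone all of them; the residue representation is set up but never exploited. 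So the proposal names the right objects but does not carry out any step that would constrain tilings of $\mathbb{Z}^n$ by $S_{n,r}$.
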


\noindent Although there is a sizable literature on the topic, the
conjecture is far from being solved. \bigskip

The $n$-cross is a cluster in $\mathbb{R}^{n}$ comprising $2n+1$ cubes, a central one and its reflections in all faces. Thus, $\{0,\pm e_{1}, \dots ,\pm e_{n}\}$ is the set of centers of cubes in the $n$-cross in $\mathbb{Z}^{n}$. It is known, see \cite{H3}, that if $2n+1$ is not a prime then there are uncountably many non-congruent tilings of $\mathbb{Z}^{n}$ by the $n$-cross. It was conjectured there that:

\begin{conjecture}
\label{Cross}If $2n+1$ is a prime then, up to a congruence, there is only
one tiling of $\mathbb{Z}^{n}$ by $n$-cross.
\end{conjecture}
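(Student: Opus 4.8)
The plan is to split the statement into two parts: a classification of all \emph{lattice} tilings by the $n$-cross, which is elementary, and a reduction of the general case to the lattice case, which is the real difficulty. First I would record the relevant notion of congruence. Since an isometry of $\mathbb{Z}^{n}$ carries a translate of the cross $V=\{0,\pm e_{1},\dots,\pm e_{n}\}$ to another translate of $V$ exactly when its linear part is a signed coordinate permutation, the congruence group in question is the hyperoctahedral group $B_{n}$ acting by permuting the coordinates and flipping signs. Thus ``unique up to congruence'' means precisely ``all tilings lie in a single $B_{n}$-orbit.''

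Next I would classify the lattice tilings. If $\mathbb{Z}^{n}=V+\mathcal{L}$ is a lattice tiling then $[\mathbb{Z}^{n}:\mathcal{L}]=\lvert V\rvert=2n+1$, so the quotient $G=\mathbb{Z}^{n}/\mathcal{L}$ has prime order and is therefore cyclic, $G\cong\mathbb{Z}_{2n+1}$. Writing $\phi:\mathbb{Z}^{n}\to\mathbb{Z}_{2n+1}$ for the quotient map, the tiling condition is exactly that $\phi$ restricts to a bijection $V\to\mathbb{Z}_{2n+1}$, equivalently that the values $a_{i}=\phi(e_{i})$ form a transversal of the $n$ pairs $\{x,-x\}$ that partition $\mathbb{Z}_{2n+1}^{*}$. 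Flipping the sign of $e_{i}$ replaces $a_{i}$ by $-a_{i}$ and permuting coordinates permutes the $a_{i}$; as these generate $B_{n}$ and every transversal can be moved to the standard one $\{1,2,\dots,n\}$ by a choice of representatives followed by a reordering, $B_{n}$ acts transitively on the lattice tilings. Hence every lattice tiling is congruent to the standard perfect Lee-code tiling $\phi(e_{i})=i$, and uniqueness holds within this class. Primality is essential at exactly this point: it is what forces $G$ to be cyclic, whereas for composite $2n+1$ the quotient can be non-cyclic and yields non-congruent lattice tilings, consistent with \cite{H3}.

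The crux, and the step I expect to be the main obstacle, is to show that \emph{every} tiling of $\mathbb{Z}^{n}$ by the cross is a lattice tiling. The prime-size result quoted above already guarantees that \emph{some} lattice tiling exists, but uniqueness demands ruling out non-lattice, and even non-periodic, tilings. Here I would bring in the polynomial method. One encodes the tile by its mask polynomial $V(\mathbf{x})=1+\sum_{i=1}^{n}\bigl(x_{i}+x_{i}^{-1}\bigr)$, and the tiling relation by the vanishing of the associated character sums; for a cyclic quotient these read, with $\zeta$ a primitive $(2n+1)$-th root of unity,
\begin{equation*}
\sum_{v\in V}\chi(v)=1+\sum_{i=1}^{n}\left(\zeta^{a_{i}}+\zeta^{-a_{i}}\right)=0
\end{equation*}
on every nontrivial character. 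The plan is to feed these vanishing conditions into the Nullstellensatz machinery of the preceding sections, using the hypothesis that $V$ generates $\mathbb{Z}^{n}$ (which the cross does, since it contains $e_{1},\dots,e_{n}$) to pin down the roots at which $V(\mathbf{x})$ must vanish, and the primality of $2n+1$ to force this vanishing pattern to be that of a single cyclic quotient, thereby forcing $\mathcal{L}$ to be a full-rank sublattice rather than an aperiodic tiling set. I expect this last implication---upgrading ``a lattice tiling exists'' to ``every tiling is a lattice tiling'' for the cross---to be the genuinely hard part, since it is exactly the general prime-size lattice conjecture specialized to $V$, for which at present only supporting evidence is available.
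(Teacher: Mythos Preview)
The statement you are addressing is a \emph{conjecture} in the paper, not a theorem; the paper gives no proof of it in general and only cites the cases $n=2,3,5$ as previously settled. So there is no ``paper's own proof'' to compare against, and your proposal cannot be judged as a correct proof of the conjecture, because you do not give one either --- as you yourself acknowledge in your final paragraph.

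Your first two steps are fine and match what the paper does implicitly. The identification of the congruence group as $B_{n}$ is correct, and your classification of lattice tilings via the quotient homomorphism $\phi:\mathbb{Z}^{n}\to\mathbb{Z}_{2n+1}$ is exactly the content of Theorem~\ref{Z} specialized to prime order: primality forces the quotient to be cyclic, the tiling condition forces $\{\pm\phi(e_{i})\}$ to be all of $\mathbb{Z}_{2n+1}^{*}$, and $B_{n}$ acts transitively on such data. This is the easy half.

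The gap is precisely where you locate it. Showing that \emph{every} tiling by the cross is lattice is the specialization of the paper's Conjecture~\ref{Lattice} to $V=\{0,\pm e_{1},\dots,\pm e_{n}\}$, and the paper explicitly leaves this open, proving it only for $q=\lvert V\rvert\le 7$ (Theorem~\ref{Con}). Your sketch of ``feeding vanishing conditions into the Nullstellensatz machinery'' does not do this: the Nullstellensatz argument in Theorem~\ref{D} and Theorem~\ref{T1} only shows that \emph{some} lattice tiling exists whenever \emph{some} tiling exists, not that an arbitrary tiling is itself lattice. Nothing in the polynomial method as developed here constrains the set $\mathcal{L}$ beyond periodicity (Theorem~\ref{period}), and periodicity is far weaker than being a lattice.

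It is also worth noting that the paper's route to the partial results is not the one you outline. Rather than arguing directly with the cross, the paper first reduces (Theorem~\ref{Semicross}) any prime-size generating tile to the semicross $\{0,e_{1},\dots,e_{q-1}\}\subset\mathbb{Z}^{q-1}$ via a surjective homomorphism, and then carries out a long ad hoc combinatorial analysis of semicross tilings for $q=3,5,7$ using the counting formula of Theorem~\ref{type}. Your character-sum idea does not appear there, and no general argument of the type you hope for is known.
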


\noindent We believe, if true, the conjecture goes against our intuition that says: The higher the dimension, the more freedom we get. The conjecture has been proved for $n=2,3$ in \cite{H3} and for $n=5$ in \cite{HH}. Thus, there is a unique tiling of $\mathbb{Z}^{n}$ by crosses for $n=2,3$, there are uncountably many tilings of $\mathbb{Z}^{4}$ by crosses, but in $\mathbb{Z}^{5}$ there is again a unique tiling by crosses. \bigskip 

To attack these two conjecture we first describe a new algebraic method, so-called ``polynomial method" that will enable us to prove some general results on tiling $\mathbb{Z}^{n}$ by translates of a cluster. We note that a similar method has been independently developed and used in \cite{KS}, where the authors focus on Nivat's conjecture. Szegedy \cite{Szegedy} proved, using a new algebraic technique based on quasigroups, that if a tile $V$ is of a prime size then each tiling of $\mathbb{Z}^{n}$ by translates of $V$ is periodic. The polynomial method provides a different proof of this result:

\begin{theorem}
Let $V \subset \mathbb{Z}^{n}$, and $\mathcal{T}$ be a tiling of $\mathbb{Z}^{n}$ by translates of $V$. If $\lvert V \rvert =q$ is prime, then $q(\mathbf{v}-\mathbf{w})$ is a period of $\mathcal{T}$ for any $\mathbf{v}, \mathbf{w}\in V$. 
\end{theorem}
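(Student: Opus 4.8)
The plan is to translate the tiling into a single identity of multivariate generating functions and then read off a divisibility that, thanks to the primality of $q$, forces an all-or-nothing behaviour of $\mathcal{L}$.

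First I would encode every set $A\subseteq\mathbb{Z}^{n}$ by the formal sum $A(\mathbf{x})=\sum_{\mathbf{a}\in A}\mathbf{x}^{\mathbf{a}}$, where $\mathbf{x}^{\mathbf{a}}=x_{1}^{a_{1}}\cdots x_{n}^{a_{n}}$, working in the module of all formal $\mathbb{Z}$-linear combinations of monomials $\mathbf{x}^{\mathbf{z}}$, $\mathbf{z}\in\mathbb{Z}^{n}$; multiplication by the \emph{finite} Laurent polynomial $V(\mathbf{x})$ is well defined there. Writing $\mathbf{Z}(\mathbf{x})=\sum_{\mathbf{z}\in\mathbb{Z}^{n}}\mathbf{x}^{\mathbf{z}}$, the tiling $\mathbb{Z}^{n}=V+\mathcal{L}$ becomes $V(\mathbf{x})\,\mathcal{L}(\mathbf{x})=\mathbf{Z}(\mathbf{x})$. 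Multiplying by $V(\mathbf{x})^{q-1}$ and using that $P(\mathbf{x})\,\mathbf{Z}(\mathbf{x})=P(\mathbf{1})\,\mathbf{Z}(\mathbf{x})$ for every Laurent polynomial $P$ (each monomial merely translates $\mathbf{Z}$ into itself), I get $V(\mathbf{x})^{q}\,\mathcal{L}(\mathbf{x})=V(\mathbf{1})^{q-1}\mathbf{Z}(\mathbf{x})=q^{\,q-1}\mathbf{Z}(\mathbf{x})$, since $V(\mathbf{1})=\lvert V\rvert=q$. Because $q$ is prime, the Frobenius (``freshman's dream'') identity gives $V(\mathbf{x})^{q}\equiv V(\mathbf{x}^{q})\pmod{q}$, where $\mathbf{x}^{q}=(x_{1}^{q},\dots,x_{n}^{q})$, so reducing all coefficients modulo $q$ yields $V(\mathbf{x}^{q})\,\mathcal{L}(\mathbf{x})\equiv 0 \pmod{q}$.

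The step I expect to carry the real weight is reading this congruence coefficientwise. The coefficient of $\mathbf{x}^{\mathbf{z}}$ in $V(\mathbf{x}^{q})\mathcal{L}(\mathbf{x})=\sum_{\mathbf{u}\in V}\mathbf{x}^{q\mathbf{u}}\mathcal{L}(\mathbf{x})$ is exactly $c(\mathbf{z}):=\lvert\{\mathbf{u}\in V:\mathbf{z}-q\mathbf{u}\in\mathcal{L}\}\rvert$, and the congruence says $c(\mathbf{z})\equiv 0\pmod{q}$. Here primality does the work: since $0\le c(\mathbf{z})\le\lvert V\rvert=q$, the only admissible values are $c(\mathbf{z})\in\{0,q\}$. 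Thus for every $\mathbf{z}$ either all of the points $\mathbf{z}-q\mathbf{u}$ ($\mathbf{u}\in V$) lie in $\mathcal{L}$, or none of them do.

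Finally I would cash this dichotomy in using $0\in V$. Fixing $\mathbf{u}\in V$: if $\mathbf{z}\in\mathcal{L}$ then $c(\mathbf{z})\ge 1$ (witnessed by $\mathbf{u}=\mathbf{0}$), forcing $c(\mathbf{z})=q$ and hence $\mathbf{z}-q\mathbf{u}\in\mathcal{L}$; conversely, if $\mathbf{z}-q\mathbf{u}\in\mathcal{L}$ then again $c(\mathbf{z})=q$, so $\mathbf{z}=\mathbf{z}-q\cdot\mathbf{0}\in\mathcal{L}$. Therefore $\mathbf{z}\in\mathcal{L}\iff\mathbf{z}-q\mathbf{u}\in\mathcal{L}$, i.e. $\mathcal{L}-q\mathbf{u}=\mathcal{L}$, so $q\mathbf{u}$ is a period of $\mathcal{T}$ for every $\mathbf{u}\in V$. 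Since the periods of $\mathcal{T}$ form a subgroup of $\mathbb{Z}^{n}$, $q(\mathbf{v}-\mathbf{w})=q\mathbf{v}-q\mathbf{w}$ is a period for all $\mathbf{v},\mathbf{w}\in V$. The only points needing care are bookkeeping ones: that multiplying the infinite series $\mathcal{L}(\mathbf{x})$ by the finite polynomial $V(\mathbf{x})^{q-1}$ is legitimate and that reducing its coefficients modulo $q$ is valid (both hold because every coefficient in sight is a finite sum); the mathematical content is entirely in the pigeonhole step $c(\mathbf{z})\in\{0,q\}$.
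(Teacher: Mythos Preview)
Your proof is correct and follows essentially the same approach as the paper: the paper's linear map $T$ is precisely ``read off the coefficient in $\mathcal{L}(\mathbf{x})$,'' and its computation $T(MQ_{V}(x_{1}^{q},\dots,x_{n}^{q}))\equiv T(MQ_{V}^{q})=q^{q-1}\equiv 0\pmod q$ is your Frobenius step $V(\mathbf{x})^{q}\mathcal{L}(\mathbf{x})=q^{q-1}\mathbf{Z}(\mathbf{x})$ in different notation, followed by the same all-or-nothing pigeonhole on the $q$ indicator terms. The only cosmetic difference is that the paper concludes directly that $q(\mathbf{v}-\mathbf{w})$ is a period, whereas you first get that each $q\mathbf{u}$ (using $0\in V$) is a period and then pass to differences via the subgroup of periods.
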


\noindent Further, applying Hilbert Nullstellensatz, we provide a necessary condition for the existence of a tiling $\mathbb{Z}^{n}$ by translates of a generic (arbitrary) set $V$. With this in hand we prove that if $V = \{ 0, v_{1} , \dots , v_{q-1} \}$ is of a prime size $q$ and $\{v_{1} , \dots ,v_{q-1}\}$ generate $\mathbb{Z}^{n}$ then there is a tiling of $\mathbb{Z}^{n}$ by translates of $V$ if and only if there is a lattice tiling of $\mathbb{Z}^{n}$ by $V$. We conjecture a much stronger result: 

\begin{conjecture}
\label{Lattice} Let $V=\{0,\mathbf{v}_{1},\dots ,\mathbf{v}_{q-1}\}\subset \mathbb{Z}^{n}$ of a prime size $q$ tiles $\mathbb{Z}^{n}$ by translates, and $\{\mathbf{v}_{1},\dots ,\mathbf{v}_{q-1}\}$ generate $\mathbb{Z}^{n}$. Then there is a unique tiling, up to a congruency, of $\mathbb{Z}^{n}$ by $V$ and this tiling is lattice.
\end{conjecture}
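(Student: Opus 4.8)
The plan is to reduce Conjecture~\ref{Lattice} to a purely finite problem about factorizations of the vector space $\mathbb{F}_q^n$, and then to analyze that problem with the polynomial/Fourier machinery underlying the periodicity theorem above. First I would exploit periodicity. Applying that theorem with $\mathbf{w}=\mathbf{0}$ shows that $q\mathbf{v}_i$ is a period of any tiling $\mathcal{T}$ for every $i$; since $\{\mathbf{v}_1,\dots,\mathbf{v}_{q-1}\}$ generates $\mathbb{Z}^n$, the period lattice $\Lambda$ of $\mathcal{T}$ contains $q\mathbb{Z}^n$. Hence $\mathcal{T}$ descends to the finite group $G=\mathbb{Z}^n/q\mathbb{Z}^n\cong\mathbb{F}_q^n$, giving a factorization $G=\bar V+\bar{\mathcal{L}}$, where $\bar V,\bar{\mathcal{L}}$ are the images of $V,\mathcal{L}$. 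Because a factorization forces $\lvert\bar V\rvert\cdot\lvert\bar{\mathcal{L}}\rvert=q^n$, while $\lvert\bar V\rvert\le q$ divides $q^n=p^n$, one automatically gets $\lvert\bar V\rvert=q$; moreover $\bar V$ spans $G$ over $\mathbb{F}_q$. Since $\mathcal{L}$ is the full preimage $\pi^{-1}(\bar{\mathcal{L}})$ under $\pi:\mathbb{Z}^n\to G$, the tiling $\mathcal{T}$ is lattice if and only if $\bar{\mathcal{L}}$ is a subgroup of $G$, necessarily a hyperplane of index $q$. Thus the ``lattice'' half of the conjecture becomes: every complement $\bar{\mathcal{L}}$ of a spanning $q$-element set $\bar V\ni\mathbf{0}$ in $\mathbb{F}_q^n$ is a hyperplane.

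Next I would recast this as a statement about vanishing sums of roots of unity, the finite-group incarnation of the polynomial method. Writing characters of $G$ as $\chi_{\mathbf{a}}(\mathbf{x})=\omega^{\mathbf{a}\cdot\mathbf{x}}$ with $\omega=e^{2\pi i/q}$, the factorization is equivalent to $\widehat{1_{\bar V}}(\mathbf{a})\,\widehat{1_{\bar{\mathcal{L}}}}(\mathbf{a})=0$ for all $\mathbf{a}\ne\mathbf{0}$. Now $\widehat{1_{\bar V}}(\mathbf{a})=\sum_{\mathbf{v}\in\bar V}\omega^{\mathbf{a}\cdot\mathbf{v}}$ is a sum of exactly $q$ roots of unity of order $q$; since $q$ is prime, irreducibility of the $q$-th cyclotomic polynomial forces such a sum to vanish precisely when the exponents $\mathbf{a}\cdot\mathbf{v}$ are pairwise distinct modulo $q$, i.e. when $\mathbf{x}\mapsto\mathbf{a}\cdot\mathbf{x}$ is injective on $\bar V$. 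Call such $\mathbf{a}$ a splitting direction. The non-splitting directions are then exactly the union of the $\binom{q}{2}$ hyperplanes $(\mathbf{v}_i-\mathbf{v}_j)^{\perp}$, and the factorization condition says $\widehat{1_{\bar{\mathcal{L}}}}$ must vanish on every such hyperplane off the origin. A splitting direction $\mathbf{a}$ is precisely a functional whose kernel is a hyperplane of which $\bar V$ is a transversal, so splitting directions correspond, up to scaling, to lattice tilings; the lattice-existence result established above guarantees that at least one splitting direction exists.

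The heart of the argument, and what I expect to be the main obstacle, is to upgrade the factorization to the assertion that $\bar{\mathcal{L}}$ is \emph{itself} a hyperplane. I would combine two ingredients: the spectral constraint that the support of $\widehat{1_{\bar{\mathcal{L}}}}$ lies in the splitting set $S\cup\{\mathbf{0}\}$, together with the Parseval identity $\sum_{\mathbf{a}\ne\mathbf{0}}\lvert\widehat{1_{\bar{\mathcal{L}}}}(\mathbf{a})\rvert^{2}=q^{2n-2}(q-1)$; and Rédei-type periodicity theorems for factorizations of elementary abelian $p$-groups, which assert that a factor must be a union of cosets of a nontrivial subgroup. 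The aim is to show that these constraints, reinforced by the spanning hypothesis (which gives $n\le q-1$ and rigidifies the arrangement of the hyperplanes $(\mathbf{v}_i-\mathbf{v}_j)^{\perp}$), pin the spectrum of $\bar{\mathcal{L}}$ down to a single line, forcing $\bar{\mathcal{L}}$ to be a hyperplane. An inductive approach, quotienting $G$ by a period of $\bar{\mathcal{L}}$ and descending to a lower-dimensional factorization, looks natural, but the quotient need not preserve the prime-size and spanning structure of $\bar V$; this is exactly the step I expect to resist a general argument, and it is already the crux of Conjecture~\ref{Cross}, the $n$-cross case, which remains open for $n\ge 4$ apart from $n=5$.

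Finally, for the uniqueness clause, I would argue that once every tiling is known to be a lattice tiling attached to a splitting direction, congruence classes of tilings correspond to orbits of splitting directions, modulo scaling, under the symmetry group of $V$ (a subgroup of the group of signed coordinate permutations fixing $V$ up to translation). Uniqueness up to congruence then amounts to transitivity of this action on $S/\mathbb{F}_q^{\ast}$, a rigidity statement I would attack by using the spanning condition to constrain the admissible transversal functionals. For the $n$-cross this transitivity is precisely Conjecture~\ref{Cross}, so I do not anticipate a uniform proof of the uniqueness part; rather, I would aim to establish the lattice part in general and to settle uniqueness in structured families such as the $n$-cross, where the symmetry group is large enough to act transitively.
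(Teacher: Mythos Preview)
The statement you are attempting to prove is Conjecture~\ref{Lattice}, and it is \emph{stated as an open conjecture in the paper}; the paper does not contain a proof of it. What the paper does prove is only the partial case $q\le 7$ (Theorem~\ref{Con}), together with the weaker Theorem~\ref{T1} that a tiling implies the existence of some lattice tiling. You seem to be aware of this, since you explicitly flag the key step as one you ``expect to resist a general argument'' and note that the uniqueness clause already contains Conjecture~\ref{Cross} as a special case. So your proposal is not a proof but an outline of a program; in that sense it cannot be ``correct'' as a proof, because no proof is known.

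That said, your reduction is sound and is worth comparing with the paper's. You pass to the quotient $\mathbb{F}_q^n$ via periodicity and recast the problem as: in any factorization $\mathbb{F}_q^n=\bar V+\bar{\mathcal{L}}$ with $\bar V$ a spanning $q$-set containing $0$, the complement $\bar{\mathcal{L}}$ must be a hyperplane. The paper takes a different, more combinatorial route: Theorem~\ref{Semicross} reduces the full conjecture to the single tile $V_{q-1}=\{0,e_1,\dots,e_{q-1}\}$ in $\mathbb{Z}^{q-1}$, and then Theorem~\ref{Con} attacks this semi-cross case for $q=2,3,5,7$ by counting codewords of prescribed types (Theorem~\ref{type} and its corollaries) and doing an explicit case analysis. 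Your spectral/Rédei framework is more conceptual and could in principle handle all $q$ at once, whereas the paper's method is tailored to small $q$ but actually yields results. The gap in your plan is exactly where you locate it: the assertion that the Fourier support of $\bar{\mathcal{L}}$ collapses to a single line does not follow from the Parseval identity and Rédei-type periodicity alone, and an inductive descent by quotienting out a period of $\bar{\mathcal{L}}$ destroys both the prime-size and the spanning hypotheses on $\bar V$, so the induction does not close. This is the genuine obstruction, and the paper does not overcome it either.
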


\noindent Clearly, if true, the above conjecture would imply Conjecture~\ref{Cross}. To provide supporting evidence we prove the above conjecture for all primes $\leq 7$. \bigskip 


\section{Polynomial Method}

First we describe \emph{Polynomial Method} that represents our main tool when tackling various tilings problems. Then we state results that, in our opinion, are of interest on their own, but also constitute an important ingredient in the proofs of main theorems of this paper. \bigskip 

Let $\mathcal{T}=\{V+l;l\in \mathcal{L\}}$ be a tiling of $\mathbb{Z}^{n}$ by translates of $V$. We define a linear map $T_{\mathcal{T}} : \mathbb{Z}[x_{1}^{\pm 1}, \dots , x_{n}^{\pm 1}]\rightarrow \mathbb{Z}$, where $\mathbb{Z}[x_{1}^{\pm 1}, \dots ,x_{n}^{\pm 1}]$ is the commutative ring of Laurent polynomials generated by $x_{1}^{\pm 1}, \dots , x_{n}^{\pm 1}$, such that, for every $(a_{1}, \dots ,a_{n})\in \mathbb{Z}^{n}$, 

\begin{equation*}
T_{\mathcal{T}} (x_{1}^{a_{1}}\cdots x_{n}^{a_{n}}) =%
\begin{cases}
1 & \text{if }(a_{1},\cdots ,a_{n})\in \mathcal{L} \\ 
0 & \text{otherwise.}%
\end{cases}%
\end{equation*}

\noindent If the tiling $\mathcal{T}$ will be clear from the context we will drop the subscript and write simply $T$. We note that $T$ is uniquely determined as the monomials $x_{1}^{a_{1}}\cdots x_{n}^{a_{n}}$ form a basis of the ring. Let $Q_{V}\in \mathbb{Z}[x_{1}^{\pm 1}, \dots , x_{n}^{\pm 1}]$ be a polynomial associated with $V$, where 
\begin{equation*}
Q_{V}(x_{1}, \dots ,x_{n})=\sum_{(a_{1}, \dots ,a_{n}) \in (-V)} x_{1}^{a_{1}} \cdots x_{n}^{a_{n}}.
\end{equation*}
Then for any monomial $x_1^{m_1} \cdots x_n^{m_n}$, 
\begin{align*}
T(x_1^{m_1} \cdots x_n^{m_n} Q_V) &= \sum_{(a_1, \dots, a_n) \in (-V)} \lvert \{(a_1 + m_1, \dots, a_n + m_n)\} \cap \mathcal{L} \rvert \\
&= \lvert (-V + (m_1, \dots, m_n)) \cap \mathcal{L} \rvert = 1.
\end{align*}
Since the map $T$ is linear and any polynomial is a linear combination of monomials, we can immediately extend this equality to
\[ T( P Q_V) = P(1, \dots, 1) \]
for any polynomial $P \in \mathbb{Z}[x_1^{\pm 1}, \dots, x_n^{\pm 1}]$. \bigskip

In what follows we will present results on tilings of $\mathbb{Z}^{n}$ by translates of a set $V\subset \mathbb{Z}^{n}$. Most of these results will be proved by utilizing properties of the linear map $T$ and the polynomial $Q_{V}$. We have termed this approach \emph{Polynomial Method}. \bigskip

We start with a technical statement:

\begin{theorem} \label{C} 
Let $\mathcal{T}$ be a tiling of $\mathbb{Z}^{n}$ by translates of $V$, and let $a$ be an integer relatively prime to $\lvert V \rvert$. Then, for any polynomial $P \in \mathbb{Z}[x_{1}^{\pm 1},\dots, x_{n}^{\pm 1}]$, we have 
\begin{equation*}
T(PQ_{V}(x_{1}^{a},\dots ,x_{n}^{a}))=P(1,\dots ,1).
\end{equation*}
\end{theorem}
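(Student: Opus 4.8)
The plan is to peel the statement down to a purely combinatorial assertion about the dilate of $V$. Since the identity is linear in $P$ and every Laurent polynomial is a $\mathbb{Z}$-combination of monomials, it suffices to treat $P = x_1^{m_1}\cdots x_n^{m_n}$, for which $P(1,\dots,1)=1$. Unwinding the definition of $Q_V(x_1^a,\dots,x_n^a)=\sum_{\mathbf u \in (-V)} x^{a\mathbf u}$ and using linearity of $T$,
\[
T\big(x_1^{m_1}\cdots x_n^{m_n}\, Q_V(x_1^a,\dots,x_n^a)\big)=\sum_{\mathbf v\in V} T\big(x^{\mathbf m - a\mathbf v}\big)=\lvert (\mathbf m - aV)\cap \mathcal{L}\rvert .
\]
Thus the theorem is \emph{equivalent} to the statement that the dilate $aV$ tiles $\mathbb{Z}^n$ by the same translation set $\mathcal{L}$, i.e. $aV\oplus\mathcal{L}=\mathbb{Z}^n$. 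This is a $\mathbb{Z}^n$-analogue of the multiplicative property of factorizations of finite abelian groups (Sands--Tijdeman), and the polynomial method is precisely the tool that replaces the finite Fourier argument.

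Next I would reduce to the case where $a=p$ is prime. Because $\gcd(a,\lvert V\rvert)=1$, every prime factor $p$ of $a$ satisfies $p\nmid \lvert V\rvert$, and dilation preserves cardinality ($\lvert aV\rvert=\lvert V\rvert$ for $a\neq 0$). Hence it is enough to prove: if $W\oplus\mathcal{L}=\mathbb{Z}^n$ with $\lvert W\rvert=q$ and $p\nmid q$ is prime, then $pW\oplus\mathcal{L}=\mathbb{Z}^n$; composing this along the prime factorization of $\lvert a\rvert$ then yields the claim for all positive $a$, and the sign $a<0$ is dealt with by a short separate argument.

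The prime step combines Frobenius with the fundamental identity $T(PQ_V)=P(1,\dots,1)$. Since $Q_V$ has $0/1$ coefficients, the freshman's dream modulo $p$ gives $Q_V(x^p)\equiv Q_V(x)^p \pmod p$, so $Q_V(x^p)=Q_V(x)^p+pR$ for some $R\in\mathbb{Z}[x_1^{\pm1},\dots,x_n^{\pm1}]$. Applying the identity with $P=x^{\mathbf m}Q_V^{\,p-1}$ gives $T(x^{\mathbf m}Q_V(x)^p)=\big(x^{\mathbf m}Q_V^{\,p-1}\big)(1,\dots,1)=q^{p-1}$, whence
\[
\lvert (\mathbf m - pV)\cap\mathcal{L}\rvert = q^{p-1}+p\,T(x^{\mathbf m}R)\equiv q^{p-1}\equiv 1 \pmod p
\]
by Fermat's little theorem ($p\nmid q$). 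Being a nonnegative integer congruent to $1$ modulo $p$, this count is at least $1$ for every $\mathbf m$; that is, $pV$ \emph{covers} $\mathbb{Z}^n$ via $\mathcal{L}$.

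The crux is upgrading this covering to an exact tiling. Summing the covering count over a large cube $B_N$ and comparing with the original tiling shows $\sum_{\mathbf m\in B_N}\lvert (\mathbf m-pV)\cap\mathcal{L}\rvert=\lvert B_N\rvert+O(N^{n-1})$, since each point of $\mathcal{L}$ is used $q=\lvert pV\rvert$ times and $\mathcal{L}$ inherits density $1/q$ from $V\oplus\mathcal{L}=\mathbb{Z}^n$; as every summand is $\ge 1$, the over-covered points form a set of density zero. I expect \emph{eliminating these exceptions entirely} to be the main obstacle: because $\mathcal{L}$ need not be periodic (Lagarias--Wang being open), the $O(N^{n-1})$ boundary error genuinely blocks a pointwise conclusion from a density count alone. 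The delicate heart of the argument is therefore to exploit the rigidity of the polynomial identity itself—rather than mere density—to force the count down to exactly $1$ at \emph{every} point, thereby recovering the full identity $T(PQ_V(x_1^a,\dots,x_n^a))=P(1,\dots,1)$.
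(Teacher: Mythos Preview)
Your reduction to monomials and then to prime dilations is correct, and the Frobenius step yielding $T(MQ_V(x_1^p,\dots,x_n^p))\equiv 1\pmod p$, hence $\ge 1$, matches the paper exactly. The genuine gap is precisely where you flag it: upgrading this covering to an exact tiling. Your density argument over large cubes cannot close this, as you yourself observe, and you stop short of supplying any replacement mechanism.

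The missing idea is short and purely algebraic---no density, no averaging, no periodicity assumption. Multiply by one further copy of $Q_V$ and apply the base identity $T(RQ_V)=R(1,\dots,1)$ with $R=MQ_V(x_1^p,\dots,x_n^p)$:
\[
T\big(MQ_V(x_1^p,\dots,x_n^p)\,Q_V\big)=\big(MQ_V(x_1^p,\dots,x_n^p)\big)(1,\dots,1)=\lvert V\rvert.
\]
On the other hand, expanding the outer $Q_V=\sum_{\mathbf v\in V}x^{-\mathbf v}$ and using your lower bound termwise,
\[
T\big(MQ_V(x_1^p,\dots,x_n^p)\,Q_V\big)=\sum_{\mathbf v\in V} T\big(Mx^{-\mathbf v}\,Q_V(x_1^p,\dots,x_n^p)\big)\ \ge\ \lvert V\rvert.
\]
Equality forces every summand to be exactly $1$; since $M$ is arbitrary, $T(M'Q_V(x_1^p,\dots,x_n^p))=1$ for every monomial $M'$. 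The case $a=-1$, which you defer, is handled in the paper by a separate short argument: first show $T(MQ_V(x_1^{-1},\dots,x_n^{-1}))\le 1$ directly (two distinct contributions would produce an overlap in the original tiling), then run the same multiply-by-$Q_V$ trick with the inequality reversed. So the ``rigidity of the polynomial identity'' you gesture at is, concretely, just one more application of that identity.
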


\begin{proof}
This statement follows directly from the two lemmas given below since $a$
can be represented as a product of primes not dividing $\lvert
V\rvert $ and possibly $-1$.
\end{proof}

\begin{lemma} \label{A}
Let $p=1$, or $p$ be a prime which does not divide $\lvert V \rvert$. Then 
\begin{equation*}
T(PQ_{V}(x_{1}^{p},\dots ,x_{n}^{p}))=P(1,\dots ,1)
\end{equation*}
for any polynomial $P\in \mathbb{Z}[x_{1}^{\pm 1},\dots ,x_{n}^{\pm 1}]$.
\end{lemma}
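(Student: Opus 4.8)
The plan is to prove the statement in its equivalent generating-function form, which also makes transparent why it holds. Set $W(x)=\sum_{v\in V}x^{v}$ (so that $W(x^{-1})=Q_V(x)$), let $\Lambda(x)=\sum_{l\in\mathcal L}x^{l}$ be the formal indicator series of $\mathcal L$, and let $S(x)=\sum_{m\in\mathbb Z^{n}}x^{m}$ be the all-ones series. The hypothesis that $\mathcal T$ is a tiling is exactly the identity $W(x)\Lambda(x)=S(x)$, since each $m$ is uniquely $v+l$. A short computation shows that $T\bigl(x^{m}Q_V(x^{p})\bigr)$ equals $N_m:=\lvert\{(v,l)\in V\times\mathcal L:pv+l=m\}\rvert\ge 0$, which is precisely the coefficient of $x^{m}$ in $W(x^{p})\Lambda(x)$ (here $W(x^p)$ means substituting $x_i\mapsto x_i^p$). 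Hence it suffices to prove every $N_m=1$; by linearity this gives $T(PQ_V(x^{p}))=P(1,\dots,1)$. The case $p=1$ is the already-established identity, so assume $p$ is a prime with $p\nmid\lvert V\rvert$; the target is then that the dilated tile $pV$ tiles $\mathbb Z^n$ with the \emph{same} translation set $\mathcal L$.

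First I would reduce modulo $p$ using Frobenius. Since $W$ has $0/1$ coefficients, $W(x)^{p}\equiv W(x^{p})\pmod p$ coefficientwise, so $W(x^{p})=W(x)^{p}-pR(x)$ for some $R\in\mathbb Z[x^{\pm1}]$. Multiplying by $\Lambda$ and repeatedly using $W\Lambda=S$ together with the absorbing identity $fS=f(1)S$ (valid for any Laurent polynomial $f$, because $x^{a}S=S$),
\[
W(x^{p})\Lambda = W^{p-1}(W\Lambda)-pR\Lambda = W^{p-1}S-pR\Lambda = \lvert V\rvert^{p-1}S-pR\Lambda .
\]
Since $p\nmid\lvert V\rvert$, Fermat gives $\lvert V\rvert^{p-1}=1+ps$ with $s\in\mathbb Z$, so $W(x^{p})\Lambda=S+pG$ where $G:=sS-R\Lambda=\sum_{m}t_{m}x^{m}$ has integer coefficients. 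Comparing coefficients yields $N_m=1+pt_m$, and since $N_m\ge 0$ we get $t_m\ge 0$. This already proves $N_m\equiv 1\pmod p$.

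The hard part is upgrading this congruence to the exact equality $N_m=1$, i.e.\ $t_m=0$: a congruence alone cannot exclude $N_m\in\{1,p+1,2p+1,\dots\}$, and a density/averaging argument only shows the exceptional set has density zero, not that it is empty. The trick I would use is that $0\in V$ combined with positivity forces $G=0$. Multiplying $G$ by $W$ and again using $W\Lambda=S$ and $fS=f(1)S$,
\[
W\cdot G = s\,W(1)S-R\,(W\Lambda) = s\lvert V\rvert S-R(1)S = \bigl(s\lvert V\rvert-R(1)\bigr)S ,
\]
while $R(1)=\frac1p\bigl(W(1)^{p}-W(1)\bigr)=\frac1p(\lvert V\rvert^{p}-\lvert V\rvert)=\lvert V\rvert s$, so $W\cdot G=0$. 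Reading off the coefficient of $x^{m}$ gives $\sum_{v\in V}t_{m-v}=0$ for every $m$; as each $t_{m-v}\ge 0$, every summand vanishes, and taking $v=0\in V$ forces $t_m=0$. Hence $N_m=1$ for all $m$, which is the claim.

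The one place where rigor is easy to lose is the legitimacy of manipulating the infinite series $\Lambda$, $S$, $R\Lambda$, $G$; but each is locally finite (multiplying a formal series by the \emph{finite} Laurent polynomials $W$ and $R$ produces well-defined coefficients), so every coefficient comparison above is valid term by term. This is the only subtlety, and it is routine.
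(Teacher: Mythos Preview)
Your proof is correct and follows essentially the same two-step argument as the paper: first use the Frobenius congruence $W(x)^{p}\equiv W(x^{p})\pmod p$ together with the tiling identity to get $N_m\equiv 1\pmod p$ (hence $N_m\ge 1$), then multiply once more by $W$ (equivalently $Q_V$) and use $W\Lambda=S$ to obtain the averaging identity $\sum_{v\in V}t_{m-v}=0$, which by nonnegativity forces equality. The only difference is packaging---you work with the formal series $W,\Lambda,S$ and the auxiliary series $G$, while the paper phrases the identical computations directly via the linear functional $T$ and the relation $T(RQ_V)=R(1,\dots,1)$.
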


\begin{proof}
Since the map $T$ is linear, it is sufficient to prove that $T(MQ(x_{1}^{p},\dots ,x_{n}^{p}))=1$ for any monomial $M$. We have
\begin{align*}
T(MQ_{V}(x_{1}^{p},\dots ,x_{n}^{p})) & \equiv T(MQ_{V}^{p})=T(MQ_{V}^{p-1}Q_{V})\\
&=(Q_{V}(1, \dots ,1))^{p-1}=\lvert V \rvert
^{p-1}\equiv 1 \pmod{p}
\end{align*}
\noindent since $T(RQ_{V})=R(1, \dots, 1)$ for any polynomial $R$. Thus $T(MQ_{V}(x_{1}^{p},\dots ,x_{n}^{p}))\geq 1$ for all monomials $M$. \bigskip 

We also have 
\begin{align}
T(MQ_{V}(x_{1}^{p},\dots ,x_{n}^{p})Q_{V}) & = \sum_{\mathbf{v}\in V} T (M \cdot x_{1}^{v_{1}}\cdots x_{n}^{v_{n}} \cdot Q_{V}(x_{1}^{p},\dots ,x_{n}^{p})) \notag \\
& \geq \sum_{\mathbf{v}\in V} 1 = \lvert V \rvert  \label{eqn:ineq1}
\end{align}
while on the other hand, 
\begin{equation*}
T(MQ_{V}(x_{1}^{p},\dots ,x_{n}^{p})Q_{V}) = Q_{V}(1^{p},\dots ,1^{p})=\lvert V \rvert.
\end{equation*}
It follows that the equality holds for every term in (\ref{eqn:ineq1}). For some fixed $\mathbf{v}\in V$, we have $T(M\cdot x_{1}^{v_{1}} \cdots x_{n}^{v_{n}}\cdot Q(x_{1}^{p},\dots ,x_{n}^{p}))=1$ for every monomial $M$. Therefore $T(MQ(x_{1}^{p},\dots ,x_{n}^{p}))=1$ for every monomial $M$.
\end{proof}

\begin{lemma} \label{B} 
\begin{equation*}
T(PQ_{V}(x_{1}^{-1},\dots ,x_{n}^{-1})) = P(1,\dots ,1)
\end{equation*}
for any polynomial $P\in \mathbb{Z}[x_{1}^{\pm 1},\dots ,x_{n}^{\pm 1}]$.
\end{lemma}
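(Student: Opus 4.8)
The plan is to mimic the skeleton of the proof of Lemma~\ref{A}: reduce to monomials, read the value of $T$ off as a nonnegative integer counting function, and pin that count to $1$ by forcing a global sum to be tight. By linearity of $T$ it suffices to prove $T(M\,Q_V(x_1^{-1},\dots,x_n^{-1}))=1$ for every monomial $M=x_1^{m_1}\cdots x_n^{m_n}$, with exponent vector $\mathbf m=(m_1,\dots,m_n)$. Since $Q_V(x_1^{-1},\dots,x_n^{-1})=\sum_{\mathbf v\in V}x_1^{v_1}\cdots x_n^{v_n}$, applying $T$ term by term gives $T(M\,Q_V(x^{-1}))=\lvert\{\mathbf v\in V:\mathbf m+\mathbf v\in\mathcal L\}\rvert=\lvert(\mathbf m+V)\cap\mathcal L\rvert$. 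I will write $f(\mathbf m)$ for this nonnegative integer; the goal becomes $f\equiv 1$.

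First I would establish the upper bound $f(\mathbf m)\le 1$. This is exactly the packing half of the tiling hypothesis: if $\mathbf v,\mathbf v'\in V$ both satisfy $\mathbf m+\mathbf v,\mathbf m+\mathbf v'\in\mathcal L$, then $\mathbf v-\mathbf v'\in(V-V)\cap(\mathcal L-\mathcal L)$, and uniqueness of the decomposition $\mathbb Z^n=V+\mathcal L$ forces $(V-V)\cap(\mathcal L-\mathcal L)=\{\mathbf 0\}$, so $\mathbf v=\mathbf v'$. Hence $f(\mathbf m)\in\{0,1\}$ for every $\mathbf m$.

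Next I would produce a global counting identity. Expanding the \emph{outer} factor $Q_V=\sum_{\mathbf u\in V}x_1^{-u_1}\cdots x_n^{-u_n}$ and using linearity,
\[
T\bigl(M\,Q_V(x^{-1})\,Q_V\bigr)=\sum_{\mathbf u\in V}T\bigl(M x_1^{-u_1}\cdots x_n^{-u_n}\,Q_V(x^{-1})\bigr)=\sum_{\mathbf u\in V}f(\mathbf m-\mathbf u).
\]
On the other hand, the fundamental identity $T(PQ_V)=P(1,\dots,1)$ applied with $P=M\,Q_V(x^{-1})$ evaluates the same quantity as $M(1,\dots,1)\,Q_V(1,\dots,1)=|V|$. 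Thus $\sum_{\mathbf u\in V}f(\mathbf m-\mathbf u)=|V|$: a sum of $|V|$ terms, each in $\{0,1\}$ by the previous step, equal to $|V|$. Every summand must therefore equal $1$; taking $\mathbf u=\mathbf 0\in V$ yields $f(\mathbf m)=1$, the desired equality.

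The step I expect to be the crux is the lower bound $f(\mathbf m)\ge 1$, i.e.\ that $-V$ together with $\mathcal L$ still \emph{covers} $\mathbb Z^n$. Viewed in isolation this looks as though it ought to require a density/amenability argument (a packing of the correct density is a tiling), since covering is not a priori symmetric under $V\mapsto -V$ for an infinite, possibly non-periodic $\mathcal L$. The whole point of the argument is that this analytic step is sidestepped: once the easy upper bound $f\le 1$ is available, the exact global count $\sum_{\mathbf u\in V}f(\mathbf m-\mathbf u)=|V|$ supplied by the fundamental identity forces covering for free. I would only double-check that $\mathbf 0\in V$ is genuinely used (to extract $f(\mathbf m)$ itself rather than merely some translate), and that $M(1,\dots,1)=1$ together with $Q_V(1,\dots,1)=|V|$, both of which are immediate.
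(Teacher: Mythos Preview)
Your proof is correct and follows essentially the same approach as the paper's: first establish the upper bound $T(MQ_V(x^{-1}))\le 1$ from the packing property of the tiling, then expand $T(MQ_V(x^{-1})Q_V)$ two ways to force every summand to equal $1$. The only difference is cosmetic---you phrase the packing step via $(V-V)\cap(\mathcal L-\mathcal L)=\{0\}$, while the paper writes out an explicit monomial $M'$ that would be double-covered; and you extract $f(\mathbf m)$ using $\mathbf 0\in V$, whereas the paper (implicitly) uses that $M x_1^{-v_1}\cdots x_n^{-v_n}$ ranges over all monomials as $M$ does, so $\mathbf 0\in V$ is not actually needed.
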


\begin{proof}
Again, it is sufficient to prove it for monomials. We first prove 
\begin{equation*}
T(MQ_{V}(x_{1}^{-1},\dots ,x_{n}^{-1})) \leq 1
\end{equation*}
for any monomial $M$. Suppose that 
\begin{equation*}
T(Mx_{1}^{-v_{1}}\cdots x_{n}^{-v_{n}})=T(Mx_{1}^{-u_{1}} \cdots x_{n}^{-u_{n}})=1
\end{equation*}
for some distinct $\mathbf{v},\mathbf{u}\in (-V)$. Then letting $M^{\prime}=Mx_{1}^{-v_{1}-u_{1}}\cdots x_{n}^{-v_{n}-u_{n}}$, we get 
\begin{equation*}
T(M^{\prime }Q_{V})\geq T(M^{\prime }x_{1}^{v_{1}} \cdots x_{n}^{v_{n}})+T(M^{\prime }x_{1}^{u_{1}}\cdots x_{n}^{u_{n}}) = 2
\end{equation*}
which contradicts the original property of $Q_{V}$. Thus $T(MQ_{V}(x_{1}^{-1},\dots ,x_{n}^{-1}))\leq 1$ for all $M$. \bigskip

Consider $MQ_{V}(x_{1}^{-1},\dots ,x_{n}^{-1})Q_{V}$. Because $T(MQ_{V}(x_{1}^{-1},\dots ,x_{n}^{-1})Q_{V})=Q_{V}(1,\dots ,1)=\lvert V \rvert$ and 
\begin{equation*}
T(MQ_{V}(x_{1}^{-1},\dots ,x_{n}^{-1})Q_{V})\leq \sum_{\mathbf{v}\in V} 1 = \lvert V \rvert,
\end{equation*}
all terms must attain equality. It follows that $T(MQ_{V}(x_{1}^{-1},\dots, x_{n}^{-1}))=1$ for any monomial $M$.
\end{proof}

\bigskip

\begin{corollary} \label{blowout}
Let $\mathcal{T} = \{V+l;l\in \mathcal{L\}}$ be a tiling of $\mathbb{Z}^{n}$ by translates of $V$, and let $a$ be an integer relatively prime to $\lvert V \rvert $ or $a=-1$. Then $\mathcal{T}_{a} = \{aV+l; l \in \mathcal{L\}}$ is a tiling of $\mathbb{Z}^{n}$ by translates of a "blow-up" tile $aV=\{av;v\in V\}$.
\end{corollary}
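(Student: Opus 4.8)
The plan is to reduce everything to Theorem~\ref{C} by recognizing the blow-up operation as the substitution $x_i \mapsto x_i^{a}$ at the level of the associated polynomials. First I would check that this substitution carries $Q_V$ to the polynomial of the blown-up tile. Since $a \neq 0$ (both when $a$ is prime to $\lvert V \rvert$ and when $a=-1$), the map $\mathbf{v} \mapsto a\mathbf{v}$ is injective on $\mathbb{Z}^n$, so $aV$ is a genuine tile with $\lvert aV \rvert = \lvert V \rvert$ and $0 \in aV$. Reindexing the sum over $-V$ by $\mathbf{b} = a\mathbf{a}$ then gives
\[
Q_V(x_1^{a}, \dots, x_n^{a}) = \sum_{(a_1,\dots,a_n)\in(-V)} x_1^{a a_1}\cdots x_n^{a a_n} = \sum_{(b_1,\dots,b_n)\in(-aV)} x_1^{b_1}\cdots x_n^{b_n} = Q_{aV}(x_1,\dots,x_n),
\]
so the polynomial appearing in Theorem~\ref{C} is exactly $Q_{aV}$.

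Next I would evaluate the same linear map $T = T_{\mathcal{T}}$, built from the original translate set $\mathcal{L}$, against $Q_{aV}$. For an arbitrary monomial $M = x_1^{m_1}\cdots x_n^{m_n}$, expanding termwise and using the defining values of $T$ exactly as in the computation preceding Theorem~\ref{C} yields
\[
T(M\, Q_{aV}) = \sum_{(b_1,\dots,b_n)\in(-aV)} T\!\left(x_1^{b_1+m_1}\cdots x_n^{b_n+m_n}\right) = \bigl\lvert (-aV + (m_1,\dots,m_n)) \cap \mathcal{L} \bigr\rvert .
\]
This is precisely the number of tiles of the candidate family $\mathcal{T}_a = \{aV+l : l\in\mathcal{L}\}$ that cover the point $(m_1,\dots,m_n)$, by the same covering-count interpretation used in the introduction (the sign in the definition of $Q_V$ is chosen so that this works).

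Finally, I would apply Theorem~\ref{C} with $P = M$ (permissible precisely because $a$ is relatively prime to $\lvert V \rvert$ or $a=-1$), which forces $T(M\,Q_{aV}) = M(1,\dots,1) = 1$. Hence $\lvert (-aV + x) \cap \mathcal{L} \rvert = 1$ for every $x \in \mathbb{Z}^n$, and since this exact-cover identity is the stated characterization of a tiling, it follows that $\mathcal{T}_a$ tiles $\mathbb{Z}^n$. I do not expect a genuine obstacle here: the real work has already been done in Theorem~\ref{C}, and the only points requiring care are the bookkeeping identity $Q_V(x_1^{a},\dots,x_n^{a}) = Q_{aV}$ (which needs $a\neq 0$ so that the monomials of $Q_{aV}$ stay distinct) and the observation that $\mathcal{T}_a$ reuses the very same map $T$, so that Theorem~\ref{C} applies verbatim rather than requiring a new tiling-dependent argument.
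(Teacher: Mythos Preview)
Your proposal is correct and follows essentially the same route as the paper: identify $Q_{aV}$ with $Q_V(x_1^{a},\dots,x_n^{a})$, apply Theorem~\ref{C} to a monomial $M$ to get $T(MQ_{aV})=1$, and interpret this as the exact-cover condition $\lvert(-aV+x)\cap\mathcal{L}\rvert=1$. The only difference is cosmetic---you add the observation that $a\neq 0$ ensures $\lvert aV\rvert=\lvert V\rvert$, which the paper leaves implicit.
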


\begin{proof}
Set $S = aV$. Then 
\begin{equation*}
Q_{S} (x_{1}, \dots ,x_{n}) = \sum_{(v_{1}, \dots ,v_{n}) \in (-V)} x_{1}^{av_{1}}\cdots x_{n}^{av_{n}} =Q_{V}(x_{1}^{a}, \dots ,x_{n}^{a}).
\end{equation*}
By the above theorem, 
\begin{equation*}
T(MQ_S) = T(MQ_{V}(x_{1}^{a},\dots ,x_{n}^{a}))=M(1,\dots ,1)=1
\end{equation*}
for any monomial $M$. Thus, for any $x \in \mathbb{Z}^{n}$, 
\begin{equation*}
\lvert (-S+x)\cap \mathcal{L}\rvert =1,
\end{equation*}
that is, $\mathcal{T}_{a} =\{aV+l;l\in \mathcal{L\}}$ is a tiling of $\mathbb{Z}^{n}$ by translates of $aV$.
\end{proof}

\bigskip

The following corollary can be found in \cite{Szegedy}. We provide
here a short proof of this result.

\begin{corollary} \label{CC}
Let $\mathcal{T} = \{V+l;l\in \mathcal{L}\}$ be a tiling of $\mathbb{Z}^{n}$ by translates of $V$, and let $a$ be an integer relatively prime to $\lvert V \rvert$. Then $l+a(v-w)\notin \mathcal{L}$ for each $l \in \mathcal{L}$ and $v,w \in V$.
\end{corollary}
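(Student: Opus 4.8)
The plan is to reduce the statement to the disjointness of tiles in the blown-up tiling produced by Corollary~\ref{blowout}. Since $a$ is relatively prime to $\lvert V \rvert$, that corollary guarantees that $\mathcal{T}_{a} = \{aV + l : l \in \mathcal{L}\}$ is a genuine tiling of $\mathbb{Z}^{n}$; in particular the tiles $aV + l$, $l \in \mathcal{L}$, are pairwise disjoint and cover every point exactly once. I would first observe that the claim is only meaningful (and clearly intended) for \emph{distinct} $v, w \in V$, since $v = w$ forces $l + a(v-w) = l \in \mathcal{L}$. For $\lvert V \rvert = 1$ there are no such pairs and the statement is vacuous, so I may assume $\lvert V \rvert \geq 2$, whence $a \neq 0$.

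Next I would argue by contradiction. Suppose that for some $l \in \mathcal{L}$ and distinct $v, w \in V$ one had $l' := l + a(v-w) \in \mathcal{L}$. Because $a \neq 0$ and $v \neq w$, we have $a(v-w) \neq 0$, so $l' \neq l$; that is, $aV + l$ and $aV + l'$ are two \emph{distinct} tiles of $\mathcal{T}_{a}$.

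The key step is then to exhibit a single point lying in both of these tiles. I would consider $x = av + l$, and a direct substitution gives
\[ aw + l' = aw + l + a(v-w) = av + l = x. \]
Since both $av$ and $aw$ belong to $aV$, the point $x$ lies simultaneously in $aV + l$ and in $aV + l'$. This contradicts the disjointness of the tiles of $\mathcal{T}_{a}$, completing the proof. The argument is essentially a one-line collision computation, so I do not anticipate a genuine obstacle; the only point requiring care is the implicit restriction to distinct $v, w$ (together with the attendant observation that $a \neq 0$ forces $av \neq aw$ and $l \neq l'$), after which everything follows from Corollary~\ref{blowout}.
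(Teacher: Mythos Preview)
Your argument is correct and is essentially identical to the paper's own proof: both invoke Corollary~\ref{blowout} to get the blown-up tiling $\mathcal{T}_a$ and then exhibit the point $av + l = aw + (l + a(v-w))$ as lying in two tiles. You add the explicit caveat that the claim is intended for distinct $v,w$ (and hence $a\neq 0$, $l\neq l'$), which the paper leaves implicit; otherwise the proofs match line for line.
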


\begin{proof}
By Corollary~\ref{blowout}, $\mathcal{T}_{a}$ $=\{aV+l;l\in \mathcal{L\}}$
is a tiling of $\mathbb{Z}^{n}$ by translates of $aV$, hence $\mathbb{Z}^{n} = aV + \mathcal{L}$. Assume that $l+a(v-w)\in \mathcal{L}$. Then 
\begin{align*}
l+av &= aw+[l+a(v-w)] \quad \text{but also} \\
l+av &= av+l;
\end{align*}
that is, $l+av \in \mathbb{Z}^{n}$ would be covered by two distinct tiles of $\mathcal{T}_{a}$.
\end{proof}

\section{A Necessary Condition for the Existence of a Tiling}

The main goal of this section is to present a necessary condition for the existence of a tiling of $\mathbb{Z}^{n}$ by translates of a generic (arbitrary) tile $V$. To the best of our knowledge this is the first condition of its type. We start by recalling a famous theorem of Hilbert \cite{Hilbert} that will be applied in the proof of this condition. 

\begin{theorem}[Nullstellensatz] 
Let $J$ be an ideal in $\mathbb{C}[x_{1},\dots ,x_{n}]$, and $S\subset \mathbb{C}^{n}$. Denote by $\mathcal{V}(J)$ the set of all common zeros of polynomials in $J$, and by $\mathcal{I}(S)$ the set of all polynomials in $\mathbb{C}[x_{1},\dots ,x_{n}]$ that vanish at all elements of $S$. Then 
\begin{equation*}
\mathcal{I}(\mathcal{V}(J)) = \sqrt{J} = \{f\in \mathbb{C}[x_{1},\dots,x_{n}] : f^{n} \in J \text{ for some } n \geq 1\}.
\end{equation*}
\end{theorem}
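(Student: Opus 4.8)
The inclusion $\sqrt{J} \subseteq \mathcal{I}(\mathcal{V}(J))$ is immediate: if $f^{k}\in J$ then $f^{k}$ vanishes on every common zero of $J$, and since $\mathbb{C}$ is an integral domain this forces $f$ itself to vanish there, so $f \in \mathcal{I}(\mathcal{V}(J))$. Hence the whole content of the theorem lies in the reverse inclusion $\mathcal{I}(\mathcal{V}(J)) \subseteq \sqrt{J}$, and my plan is to obtain it in two stages: first establish the \emph{weak} Nullstellensatz, namely that every proper ideal of $\mathbb{C}[x_{1},\dots,x_{n}]$ has a common zero in $\mathbb{C}^{n}$, and then bootstrap to the strong statement by the Rabinowitsch trick.

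For the weak form I would argue through the maximal ideals. Since $\mathbb{C}[x_{1},\dots,x_{n}]$ is Noetherian by the Hilbert basis theorem, a proper ideal $J$ is contained in some maximal ideal $\mathfrak{m}$, and it suffices to show $\mathcal{V}(\mathfrak{m}) \neq \emptyset$. Consider the field $K = \mathbb{C}[x_{1},\dots,x_{n}]/\mathfrak{m}$, which is finitely generated as a $\mathbb{C}$-algebra. The crucial input is Zariski's lemma: a field that is finitely generated as an algebra over a field $k$ is a finite algebraic extension of $k$. Applying it with $k = \mathbb{C}$, and using that $\mathbb{C}$ is algebraically closed, gives $K = \mathbb{C}$; the images $a_{i}$ of the $x_{i}$ under the quotient map $\mathbb{C}[x_{1},\dots,x_{n}] \to K = \mathbb{C}$ then satisfy $\mathfrak{m} = (x_{1} - a_{1}, \dots, x_{n} - a_{n})$, so $(a_{1},\dots,a_{n}) \in \mathcal{V}(\mathfrak{m})$.

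With the weak form in hand I would deduce the strong form as follows. Let $f \in \mathcal{I}(\mathcal{V}(J))$ and write $J = (g_{1},\dots,g_{m})$, possible since the ring is Noetherian. Introduce a fresh variable $y$ and form the ideal $J' = (g_{1},\dots,g_{m},\,1 - yf)$ in $\mathbb{C}[x_{1},\dots,x_{n},y]$. This ideal has no common zero: at any point where all the $g_{i}$ vanish, $f$ vanishes too (as $f \in \mathcal{I}(\mathcal{V}(J))$), whence $1 - yf = 1 \neq 0$. By the weak Nullstellensatz $J' = (1)$, giving an identity $1 = \sum_{i} h_{i} g_{i} + h_{0}(1 - yf)$ with $h_{i}, h_{0} \in \mathbb{C}[x_{1},\dots,x_{n},y]$. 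Substituting $y = 1/f$ and clearing denominators by multiplying through by a sufficiently high power $f^{N}$ yields $f^{N} \in J$, i.e. $f \in \sqrt{J}$.

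The genuinely hard step is Zariski's lemma, the engine behind the weak Nullstellensatz; everything else is formal. I would prove it either via Noether normalization, exhibiting algebraically independent elements over which the algebra is module-finite, or by the shorter Artin--Tate argument, and I would note that the passage from the weak to the strong form (the Rabinowitsch trick) and the easy inclusion require no further ideas.
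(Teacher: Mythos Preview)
Your proof is correct and is the standard modern argument for Hilbert's Nullstellensatz (weak form via Zariski's lemma, then the Rabinowitsch trick to upgrade to the strong form). However, there is nothing to compare it against: the paper does not prove this theorem at all. The Nullstellensatz is merely \emph{recalled} as a classical result of Hilbert, with a citation to \cite{Hilbert}, and is then used as a black box in the proof of Theorem~\ref{D}. So while your write-up is sound, it goes well beyond what the paper does for this statement.
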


\noindent The following statement is the main theorem of this section.

\begin{theorem} \label{D}
Let $V\subset \mathbb{Z}^{n}$ be a tile. Then there is a tiling of $\mathbb{Z}^{n}$ by translates of $V$ only if there exist $(x_{1},\dots,x_{n}) \in (\mathbb{C} \setminus \{0\})^{n}$ such that $Q_{V}(x_{1}^{a}, \dots, x_{n}^{a})=0$ simultaneously for all $a$ relatively prime to $\lvert V \rvert$. 
\end{theorem}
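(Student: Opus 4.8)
The plan is to prove the contrapositive: assuming $V$ admits a tiling $\mathcal{T}=\{V+l : l\in\mathcal{L}\}$, I will show that the Laurent polynomials $g_a := Q_V(x_1^a,\dots,x_n^a)$, as $a$ ranges over the integers coprime to $q:=\lvert V\rvert$, must have a common zero in $(\mathbb{C}\setminus\{0\})^n$. So suppose they do not. The guiding idea is that the absence of a common zero on the torus forces, via the Nullstellensatz, an algebraic identity $\sum_j P_j g_{a_j}=1$; I will then evaluate the single quantity $\sum_j P_j(1,\dots,1)$ in two different ways --- once through the tiling functional $T$ and once by plain substitution $x=(1,\dots,1)$ --- and the two answers will be incompatible.

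First I would reduce ``no common zero on the torus'' to a unit-ideal relation. Because the Nullstellensatz quoted above lives on affine space $\mathbb{C}^{n}$ while our condition excludes the coordinate hyperplanes, I will use the Rabinowitsch device: introduce variables $y_1,\dots,y_n$, let $\tilde g_a\in\mathbb{C}[x_1,\dots,x_n,y_1,\dots,y_n]$ be the honest polynomial obtained from $g_a$ by replacing each $x_i^{-1}$ with $y_i$, and form the ideal $J$ generated by all the $\tilde g_a$ together with the relations $x_iy_i-1$. A common zero of $J$ in $\mathbb{C}^{2n}$ is exactly a common zero of the $g_a$ on the torus, so our assumption gives $\mathcal{V}(J)=\emptyset$; the Nullstellensatz then yields $\sqrt J=(1)$, hence $1\in J$. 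Writing $1$ as a finite combination and substituting $y_i=x_i^{-1}$ (a ring homomorphism that kills every $x_iy_i-1$ and returns each $\tilde g_a$ to $g_a$) produces a finite Laurent-polynomial identity $\sum_{j=1}^{k} P_j\,g_{a_j}=1$ with $P_j\in\mathbb{C}[x_1^{\pm1},\dots,x_n^{\pm1}]$ and each $a_j$ coprime to $q$.

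Now I would extract the contradiction. Extend $T$ by $\mathbb{C}$-linearity; Theorem~\ref{C} then gives $T(P_j g_{a_j})=P_j(1,\dots,1)$, so applying $T$ to the identity yields $\sum_j P_j(1,\dots,1)=T(1)$, and $T(1)=T(x_1^0\cdots x_n^0)\in\{0,1\}$ is an integer. On the other hand, substituting $x=(1,\dots,1)$ into the same identity and using $g_{a_j}(1,\dots,1)=Q_V(1,\dots,1)=q$ gives $q\sum_j P_j(1,\dots,1)=1$, i.e.\ $\sum_j P_j(1,\dots,1)=1/q$. Comparing the two computations, $1/q=T(1)\in\mathbb{Z}$, which is impossible once $q\geq 2$. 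This contradiction establishes the required common zero, proving the theorem.

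The conceptual heart --- and the step I expect to be the one needing care --- is the interplay encoded in $T(PQ_V)=P(1,\dots,1)$: the functional $T$ ``sees'' each factor $g_{a_j}$ as contributing the value $1$ (one tile per lattice point), whereas naive evaluation at $(1,\dots,1)$ makes it contribute $Q_V(1,\dots,1)=q$. This discrepancy of a factor $q$ is precisely the tiling efficiency, and it is what turns an otherwise harmless unit-ideal relation into the absurdity $1/q\in\mathbb{Z}$. Technically the only delicate point is the passage to the torus version of the Nullstellensatz, handled by the auxiliary variables $y_i$; everything afterward is forced. (The hypothesis $q\geq 2$ is genuinely needed: for $V=\{0\}$ one has $Q_V\equiv 1$, which never vanishes, yet $V$ tiles trivially, so the degenerate case $\lvert V\rvert=1$ must be excluded.)
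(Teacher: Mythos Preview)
Your proof is correct and follows essentially the same strategy as the paper: derive a Laurent Nullstellensatz identity $\sum_j P_j\,Q_V(x^{a_j})=1$, then compare the value $\sum_j P_j(1,\dots,1)$ computed via the tiling functional $T$ (yielding an integer) with the value obtained by direct substitution $x=(1,\dots,1)$ (yielding $1/\lvert V\rvert$). The only notable difference is the technical device for the Laurent Nullstellensatz step --- you use the Rabinowitsch trick with auxiliary inverse variables $y_i$, whereas the paper clears denominators by multiplying each $f_i$ by a power of $x_1\cdots x_n$ and then observes that the resulting affine variety is exactly the union of coordinate hyperplanes; both are standard and equally valid.
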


\begin{proof}
To prove the theorem we show that if there is no $(x_{1},\dots ,x_{n}) \in (\mathbb{C}\setminus \{0\})^{n}$ such that $Q_{V}(x_{1}^{a}, \dots, x_{n}^{a}) = 0$ simultaneously for all $a$ relatively prime to $\lvert V \rvert$ then there is no tiling of $\mathbb{Z}^{n}$ by translates of $V$. \bigskip 

We start with an auxiliary statement:

\begin{quote} 
$(\ast)$ Let $\{f_{i}\}_{i \in I} \subset \mathbb{C} [x_{1}^{\pm 1},\dots,x_{n}^{\pm 1}]$ be a set of Laurent polynomials such that there exists no $(x_{1},\dots ,x_{n})\in (\mathbb{C}\setminus \{0\})^{n}$ with $f_{i}(x_{1},\dots ,x_{n})=0$ simultaneously for $i\in I$. Then there exist Laurent polynomials $p_{1},\dots ,p_{k}$ and indices $i_{1},\dots ,i_{k} \in I $ such that 
\begin{equation*}
f_{i_{1}}p_{1} + \dots + f_{i_{k}}p_{k}=1.
\end{equation*}
\end{quote}
Indeed, for each $i \in I$, consider a sufficiently large integer $n_{i}$ which makes $(x_{1}\cdots x_{n})^{n_{i}-1}f_{i}\in \mathbb{C}[x_{1},\dots,x_{n}]$; if $f_{i} \in \mathbb{C}[x_{1},\dots, x_{n}]$, then we simply set $g_{i}=(x_{1}\cdots x_{n})^{1} f_{i}$. Then $g_{i}=(x_{1}\cdots x_{n})^{n_{i}} f_{i}$ is not only a polynomial, but also a multiple of $x_{1}\cdots x_{n}$. Consider the ideal $J\subset \mathbb{C}[x_{1},\dots ,x_{n}]$ generated by the polynomials $g_{i}$. By the condition, there is no $x\in (\mathbb{C}\setminus \{0\})^{n}$ that makes $g_{i}(x)=0$ for all $i\in I$. On the other hand, $g_{i}(x)=0$ if any one of $x_{1},\dots ,x_{n}$ is zero since the polynomial is a multiple of $x_{1}\cdots x_{n}$. Thus it follows that 
\begin{equation*}
\mathcal{V}(J) = \{(x_{1},\dots ,x_{n})\in \mathbb{C}^{n} : x_{1} x_{2} \cdots x_{n}=0\}
\end{equation*}
and therefore, by Hilbert's Nullstellensatz, $x_{1}\cdots x_{n}\in \mathcal{I}(\mathcal{V}(J))=\sqrt{J}$; i.e., there exists a positive integer $m$ for which $(x_{1}\cdots x_{n})^{m}\in J$.\bigskip

Let $q_{1},\dots ,q_{k}$ and $i_{1},\dots ,i_{k}$ be the polynomials and indices which make 
\begin{align*}
(x_{1}\cdots x_{n})^{m} & = g_{i_{1}}q_{1}+\dots +g_{i_{k}}q_{k} \\
& = (x_{1}\cdots x_{n})^{n_{i_{1}}}f_{i_{1}}q_{1} + \dots + (x_{1} \cdots x_{n})^{n_{i_{k}}}f_{i_{k}}q_{k}. 
\end{align*}
Then dividing both sides by $(x_{1}\cdots x_{n})^{m}$, we get 
\begin{equation*}
1=f_{i_{1}}\frac{q_{1}}{(x_{1}\cdots x_{n})^{m-{n_{i_{1}}}}}+\dots +f_{i_{k}} \frac{q_{k}}{(x_{1} \cdots x_{n})^{m-{n_{i_{k}}}}}.
\end{equation*}
The proof of $(\ast)$ is complete. \bigskip

We are ready to prove the theorem. Assume that there is no $(x_{1},\dots ,x_{n})\in (\mathbb{C}\setminus \{0\})^{n}$ such that $Q_{V}(x_{1}^{a},\dots ,x_{n}^{a})=0$ simultaneously for all $a$ relatively prime to $\lvert V \rvert$. By $(\ast )$, we obtain
Laurent polynomials $P_{1},\dots ,P_{t}$ and integers $a_{1},\dots ,a_{t}$
relatively prime with $\lvert V \rvert$ for which 
\begin{equation}
P_{1}Q(x_{1}^{a_{1}},\dots ,x_{n}^{a_{1}})+\dots +P_{t}Q(x_{1}^{a_{t}},\dots,x_{n}^{a_{t}})=1.  \label{aa}
\end{equation}
Replacing all $x_{1},\dots ,x_{n}$ with $1$, we get 
\begin{equation}
P_{1}(1,\dots ,1)+\dots +P_{t}(1,\dots ,1) = 1 / \lvert V \rvert.  \label{a}
\end{equation}
Suppose that there exists a tiling of $\mathbb{Z}^{n}$ be translates of $V$. By (\ref{aa}), we have, for any monomial $M$, 
\begin{align*}
T(M)& = T(M(P_{1}Q(x_{1}^{a_{1}},\dots ,x_{n}^{a_{1}}) + \dots + P_{t}Q(x_{1}^{a_{t}},\dots ,x_{n}^{a_{t}}))) \\
& = T(MP_{1}Q(x_{1}^{a_{1}},\dots ,x_{n}^{a_{1}})) + \dots + T(MP_{t}Q(x_{1}^{a_{t}},\dots ,x_{n}^{a_{t}})) \\
& = P_{1}(1,\dots ,1)+\dots +P_{t}(1,\dots ,1)=1/\lvert V \rvert, \quad \text{(by Theorem~\ref{C})} 
\end{align*}
with respect to (\ref{a}). Because this differs from $0$ and $1$, we arrive at a
contradiction.
\end{proof}

\begin{remark}
To demonstrate that the above condition is only a necessary one, consider a tile $V$ given in Fig.2. We have $Q_{V}(x,y)=1+x+y+x^{2}y$, and $x=1,y=-1$ is a common root of $Q_{V}(x,y)$ and of $Q_{V}(x^{3},y^{3})$. That is, there is a non-zero common root of $Q_{V}(x^{a},y^{a})$ for each $a$ relatively prime to $4$, although there is no tiling of $\mathbb{Z}^{2}$ by $V$. However, we will prove in the next section that this  condition is a necessary and sufficient condition for tiles of a prime size.
\end{remark}

\begin{figure}[h]
\centering
\begin{tikzpicture}[scale=1]
\draw (0,0) -- (1,0) -- (1,1) -- (2,1) -- (2,0) -- (3,0) -- (3,2) -- (1,2) -- (1,1) -- (0,1) -- cycle;
\draw (2,2) -- (2,1) -- (3,1);
\end{tikzpicture} \\
\vspace{1em}
Fig.2.
\end{figure}

\noindent One of the main strength of the above theorem is that it is not limited by a special size or by a special shape of the tile. On the other hand, it is very difficult to see whether the system has a common root if the size of the tile is composite. Therefore, it will require additional research to enable one to apply this theorem toward the Golomb-Welch conjuncture. On the other hand, this theorem enables us to prove, see the next section, that there is a tiling of $\mathbb{Z}^{n}$ by translates of a prime size tile $V$ if and only if there is a lattice tiling by $V$. 

\section{Tiles of a Prime Size}

Using Polynomial Method, we show that if $V$ is a tile of a prime size then each tiling of $\mathbb{Z}^{n}$ by translates of $V$ is periodic, and that the existence of a tiling of $\mathbb{Z}^{n}$ by $V$ guarantees the existence of a lattice tiling.

\begin{theorem} \label{period}
Let $V\subset \mathbb{Z}^{n}$ be a tile, and $\mathcal{T}$ be a tiling of $\mathbb{Z}^{n}$ by translates of $V$. If $\lvert V \rvert=q$ is prime, then $q(\mathbf{v}-\mathbf{w})$ is a period of $\mathcal{T}$ for any $\mathbf{v}, \mathbf{w} \in V$. 
\end{theorem}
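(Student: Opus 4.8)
The plan is to recast the periodicity of $\mathcal{T}$ in terms of the map $T$. Writing $x^{\mathbf{a}}=x_1^{a_1}\cdots x_n^{a_n}$ and $\mathbf{d}=\mathbf{v}-\mathbf{w}$ (the case $\mathbf{v}=\mathbf{w}$ being trivial, since then $q(\mathbf{v}-\mathbf{w})=\mathbf{0}$), the vector $q\mathbf{d}$ is a period exactly when $T(x^{\mathbf{m}})=T(x^{\mathbf{m}+q\mathbf{d}})$ for every monomial, i.e. when $T\big(x^{\mathbf{m}}(x^{q\mathbf{d}}-1)\big)=0$ for all $\mathbf{m}$. The device I would use is the telescoping factorization $x^{q\mathbf{d}}-1=(x^{\mathbf{d}}-1)\,S$, where $S=\sum_{j=0}^{q-1}x^{j\mathbf{d}}$. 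Since $x^{\mathbf{m}}(x^{\mathbf{d}}-1)S=x^{\mathbf{m}+\mathbf{d}}S-x^{\mathbf{m}}S$, it suffices to show that $T(x^{\mathbf{m}}S)$ does not depend on $\mathbf{m}$; I will in fact prove $T(x^{\mathbf{m}}S)=1$ for every monomial.

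Next I would establish the upper bound $T(x^{\mathbf{m}}S)\le 1$. By definition $T(x^{\mathbf{m}}S)$ counts how many of the $q$ points $\mathbf{m},\mathbf{m}+\mathbf{d},\dots,\mathbf{m}+(q-1)\mathbf{d}$ lie in $\mathcal{L}$. Because $q$ is prime, every $a\in\{1,\dots,q-1\}$ is relatively prime to $q=\lvert V\rvert$, so Corollary~\ref{CC} forbids two of these points from both lying in $\mathcal{L}$: if $\mathbf{m}+j_1\mathbf{d}$ and $\mathbf{m}+j_2\mathbf{d}$ were both in $\mathcal{L}$ with $j_1<j_2$, then with $l=\mathbf{m}+j_1\mathbf{d}\in\mathcal{L}$ and $a=j_2-j_1$ we would have $l+a(\mathbf{v}-\mathbf{w})\in\mathcal{L}$, contradicting the corollary. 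Hence at most one of the $q$ points lies in $\mathcal{L}$, giving $T(x^{\mathbf{m}}S)\le 1$.

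The key step, and the one I expect to carry the real content, is upgrading this inequality to an equality. Here I would exploit the defining identity $T(PQ_V)=P(1,\dots,1)$. Taking $P=x^{\mathbf{m}}S$ gives $T(x^{\mathbf{m}}SQ_V)=(x^{\mathbf{m}}S)(1,\dots,1)=q$, since $S(1,\dots,1)=q$. On the other hand, expanding $Q_V=\sum_{\mathbf{u}\in(-V)}x^{\mathbf{u}}$ yields $T(x^{\mathbf{m}}SQ_V)=\sum_{\mathbf{u}\in(-V)}T(x^{\mathbf{m}+\mathbf{u}}S)$, a sum of $q$ terms each at most $1$ by the previous step. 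A total of $q$ forces every term to equal $1$; in particular, since $0\in V$, the term $\mathbf{u}=\mathbf{0}$ gives $T(x^{\mathbf{m}}S)=1$. This squeezing of an a priori bound against an exact total is exactly the mechanism already used in Lemmas~\ref{A} and~\ref{B}, and identifying the right multiplier (the geometric sum $S$, rather than $Q_V$ alone) is the crux of the argument.

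Finally, with $T(x^{\mathbf{m}}S)=1$ for all $\mathbf{m}$, I would conclude $T\big(x^{\mathbf{m}}(x^{q\mathbf{d}}-1)\big)=T(x^{\mathbf{m}+\mathbf{d}}S)-T(x^{\mathbf{m}}S)=1-1=0$, so $T(x^{\mathbf{m}})=T(x^{\mathbf{m}+q\mathbf{d}})$ for every monomial and $q(\mathbf{v}-\mathbf{w})$ is a period of $\mathcal{T}$. The only subtlety to double-check is the bookkeeping of which index $a=j_2-j_1$ falls in the range where Corollary~\ref{CC} applies, which is precisely where the primality of $q$ enters.
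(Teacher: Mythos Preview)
Your argument is correct, but it takes a genuinely different route from the paper's. You fix a single direction $\mathbf{d}=\mathbf{v}-\mathbf{w}$, form the geometric sum $S=\sum_{j=0}^{q-1}x^{j\mathbf{d}}$, use Corollary~\ref{CC} to get $T(x^{\mathbf{m}}S)\le 1$, and then squeeze against $T(x^{\mathbf{m}}SQ_V)=q$ exactly as in Lemmas~\ref{A} and~\ref{B}. The paper instead works with the blown-up polynomial $Q_V(x_1^{q},\dots,x_n^{q})$ and a Frobenius-type congruence: since $Q_V(x_1^{q},\dots,x_n^{q})\equiv Q_V^{\,q}\pmod q$, one gets $T(MQ_V(x_1^{q},\dots,x_n^{q}))\equiv q^{q-1}\equiv 0\pmod q$; but this quantity is a sum of $q$ values each $0$ or $1$, so they are all equal, giving $T(Mx^{-q\mathbf{v}})=T(Mx^{-q\mathbf{w}})$ simultaneously for all $\mathbf{v},\mathbf{w}\in V$. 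The paper's proof is self-contained (it does not invoke Corollary~\ref{CC}) and handles all pairs $(\mathbf{v},\mathbf{w})$ at once; your proof, on the other hand, has a pleasant geometric reading---every length-$q$ window of the arithmetic progression $\mathbf{m}+j\mathbf{d}$ meets $\mathcal{L}$ exactly once---at the cost of importing the earlier corollary.
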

%

\begin{remark}
As mentioned in the introduction, Szegedy \cite{Szegedy} proved the statement by using a new technique based on loops.  Another  proof of the above statement, using similar ideas, can be found in \cite{KS}. 
\end{remark}

\begin{proof}

Consider any monomial $M$. We have
\begin{align*}
T(MQ_{V}(x_{1}^{q},\dots ,x_{n}^{q})) & \equiv T(MQ_{V}^{q})=T(MQ_{V}^{q-1}Q_{V})\\
&=(Q_{V}(1, \dots ,1))^{q-1}= q^{q-1} \equiv 0 \pmod{q}
\end{align*}
since $T(RQ_{V})=R(1, \dots, 1)$ for any polynomial $R$. On the other hand, by definition
\begin{equation*}
T(MQ_V(x_1^q, \dots, x_n^q)) = \sum_{(a_1, \dots, a_n) \in (V)} T(M x_1^{- q a_1} \cdots x_n^{- q a_n}). 
\end{equation*}
Since the sum of $\lvert V \rvert = q$ terms, each of which are either $0$ or $1$, is a multiple of $q$, we conclude that every term must be either simultaneously $0$ or simultaneously $1$. Hence for any $\mathbf{v} = (v_1, \dots, v_n)$ and $\mathbf{w} = (w_1, \dots, w_n)$ in $V$,
\begin{equation*}
T(M x_1^{- q v_1} \cdots x_n^{- q v_n}) = T(M x_1^{-q w_1} \cdots x_n^{-q w_n}) = 0 \text{ or } 1.
\end{equation*}
It follows that for any $\mathbf{x} \in \mathbb{Z}^n$, the point $\mathbf{x}$ is in $\mathcal{L}$ if and only if $\mathbf{x} + q(\mathbf{v} - \mathbf{w})$ is in $\mathcal{L}$. Therefore $q(\mathbf{v} - \mathbf{w})$ is a period of $\mathcal{T}$. \bigskip
\end{proof}

To prove a main result of this section we first state a necessary and sufficient condition, in terms of a homomorphism, for the existence of a \emph{lattice} tiling of $\mathbb{Z}^{n}$ by translates of $V$. We will use this condition in the proof of the following theorem.

\begin{theorem}[\cite{HB}] \label{Z}
Let $V$ be a subset of $\mathbb{Z}^{n}$. Then there is a lattice tiling $\mathcal{T}$ of $\mathbb{Z}^{n}$ by $V$ if and only if there is an Abelian group $G$ of order $\lvert V\rvert $ and a homomorphism $\phi : \mathbb{Z}^{n}\rightarrow G$ so that the restriction of $\phi $ to $V$ is a bijection. 
\end{theorem}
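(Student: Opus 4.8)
The plan is to prove the two implications separately, using the standard dictionary between finite-index sublattices of $\mathbb{Z}^{n}$ and finite Abelian quotient groups. The key observation is that, for a subgroup $\mathcal{L}\subseteq\mathbb{Z}^{n}$, the statement ``$V$ tiles $\mathbb{Z}^{n}$ by the translates $\{V+l : l\in\mathcal{L}\}$'' is exactly the statement ``$V$ is a complete set of coset representatives (a transversal) for $\mathcal{L}$ in $\mathbb{Z}^{n}$''; indeed, $\mathbb{Z}^{n}=V+\mathcal{L}$ with unique representation says precisely that every coset $x+\mathcal{L}$ meets $V$ in exactly one point. Once this reformulation is in place, the group $G$ in the statement is nothing but the quotient $\mathbb{Z}^{n}/\mathcal{L}$, and the homomorphism $\phi$ is the canonical projection.

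For the forward direction, suppose $\mathcal{T}=\{V+l : l\in\mathcal{L}\}$ is a lattice tiling. I would set $G=\mathbb{Z}^{n}/\mathcal{L}$ and let $\phi:\mathbb{Z}^{n}\to G$ be the quotient homomorphism. Since the tiling gives a unique representation $x=v+l$ with $v\in V$ and $l\in\mathcal{L}$, each coset of $\mathcal{L}$ contains exactly one element of $V$; equivalently, $\phi|_{V}$ is both injective and surjective, hence a bijection. In particular $|G|=[\mathbb{Z}^{n}:\mathcal{L}]=|V|$, so $G$ is a finite Abelian group of the required order, the tiling condition itself forcing $\mathcal{L}$ to have finite index so that $G$ is genuinely finite.

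For the reverse direction, suppose $G$ is Abelian of order $|V|$ and $\phi:\mathbb{Z}^{n}\to G$ is a homomorphism whose restriction $\phi|_{V}$ is a bijection onto $G$. I would take $\mathcal{L}=\ker\phi$, a subgroup of $\mathbb{Z}^{n}$. Because $\phi|_{V}$ is already surjective, $\phi$ itself is onto $G$, so $\mathbb{Z}^{n}/\mathcal{L}\cong G$ has order $|V|$; thus $\mathcal{L}$ has finite index $|V|$ and is a full-rank sublattice of $\mathbb{Z}^{n}$. To verify the tiling, given $x\in\mathbb{Z}^{n}$, bijectivity of $\phi|_{V}$ yields a unique $v\in V$ with $\phi(v)=\phi(x)$, whence $x-v\in\ker\phi=\mathcal{L}$ and $x=v+l$ for some $l\in\mathcal{L}$; uniqueness of $v$ follows from injectivity of $\phi|_{V}$, and then $l$ is determined. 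Hence $\{V+l : l\in\mathcal{L}\}$ is a lattice tiling.

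I do not expect a serious obstacle here: once the tiling condition is rephrased as ``$V$ is a transversal of $\mathcal{L}$,'' both directions reduce to the first isomorphism theorem applied to the projection $\mathbb{Z}^{n}\to\mathbb{Z}^{n}/\mathcal{L}$. The only point needing a little care is matching the finiteness and index conditions: one must check that a lattice tiling forces $[\mathbb{Z}^{n}:\mathcal{L}]=|V|<\infty$, so that $G$ is finite, and conversely that $\phi|_{V}$ being a bijection forces $\phi$ to be surjective with $\mathbb{Z}^{n}/\ker\phi$ of order $|V|$, so that $\mathcal{L}=\ker\phi$ is a genuine full-rank lattice rather than a subgroup of smaller rank.
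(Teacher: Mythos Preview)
Your argument is correct and is the standard proof of this well-known characterization. Note, however, that the paper does not actually prove this theorem: it is quoted from \cite{HB} and stated without proof, serving only as an input to the proof of Theorem~\ref{T1}. There is therefore no proof in the paper to compare against, but your write-up would serve perfectly well as a self-contained justification were one desired.
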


\noindent Now we are ready to show that the existence of a tiling guarantees
the existence of a lattice one. We point out that the same statement in the language of 
Abelian groups, is given, with only a hint on the proof, in \cite{Szegedy}.

\begin{theorem} \label{T1}
Let $V=\{0,v_{1}, \dots ,v_{q-1}\} \subset \mathbb{Z}^{n}$ be a prime size tile, and suppose that $\mathbf{\{v_{1}, \dots ,v_{q-1}\}}$ generate $\mathbb{Z}^{n}$. Then there exists a tiling of $\mathbb{Z}^{n}$ by translates of $V$ if and only if there is a lattice tiling of $\mathbb{Z}^{n}$ by translates of $V$. 
\end{theorem}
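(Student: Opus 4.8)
The backward implication is immediate, since a lattice tiling is in particular a tiling. So the plan is to produce a lattice tiling out of an arbitrary one, and the natural route is to manufacture the homomorphism demanded by Theorem~\ref{Z}. First I would invoke Theorem~\ref{D}: since a tiling exists, there is a point $x=(x_1,\dots,x_n)\in(\mathbb{C}\setminus\{0\})^n$ with $Q_V(x_1^a,\dots,x_n^a)=0$ for every $a$ coprime to $q$. Writing $t_v=x^{-v}:=x_1^{-v_1}\cdots x_n^{-v_n}$ for $v\in V$, the identity $Q_V(x_1^a,\dots,x_n^a)=\sum_{v\in V}t_v^{\,a}$ turns these vanishing conditions into the statement that the power sums $p_a=\sum_{v\in V}t_v^{\,a}$ vanish for all $a$ coprime to $q$; in particular, because $q$ is prime, $p_1=p_2=\dots=p_{q-1}=0$.

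Next I would feed this into Newton's identities. With $q$ quantities $t_v$ and $p_1=\dots=p_{q-1}=0$, Newton's identities give $k\,e_k=0$, hence $e_k=0$ (we work in characteristic zero), for each $1\le k\le q-1$, where $e_k$ is the $k$-th elementary symmetric function of the $t_v$. Consequently
\[
\prod_{v\in V}(T-t_v)=T^q-c,\qquad c=\prod_{v\in V}t_v,
\]
so each $t_v$ is a $q$-th root of $c$. Since $0\in V$ forces $t_0=x^{0}=1$, we get $c=1$, and therefore the multiset $\{t_v:v\in V\}$ is exactly the set of $q$ distinct $q$-th roots of unity. In particular the $t_v$ are pairwise distinct elements of the cyclic group $\mu_q$ of $q$-th roots of unity.

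Finally I would package this as a homomorphism. The assignment $u\mapsto x^{-u}$ is a group homomorphism $\Phi:\mathbb{Z}^n\to\mathbb{C}^{\ast}$ with $\Phi(v_i)=t_{v_i}\in\mu_q$. Because $\{v_1,\dots,v_{q-1}\}$ generates $\mathbb{Z}^n$, the image $\Phi(\mathbb{Z}^n)$ is generated by the $\Phi(v_i)$ and hence lies in $\mu_q$. Composing $\Phi$ with the isomorphism $\mu_q\cong\mathbb{Z}/q\mathbb{Z}$ yields a homomorphism $\phi:\mathbb{Z}^n\to\mathbb{Z}/q\mathbb{Z}$ onto an Abelian group of order $q=\lvert V\rvert$ whose restriction to $V$ is injective (the $t_v$ are distinct) and hence bijective. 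Theorem~\ref{Z} then delivers a lattice tiling, completing the forward direction.

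The crux — the step I expect to carry the real content — is the passage from the vanishing power sums to the conclusion that the $t_v$ are precisely the $q$-th roots of unity; primality of $q$ is exactly what makes every $a\in\{1,\dots,q-1\}$ coprime to $q$, furnishing enough vanishing power sums to pin down all the $e_k$. The generation hypothesis then plays its single essential role: it is what forces $\Phi$ to take values in the finite group $\mu_q$ rather than in some infinite subgroup of $\mathbb{C}^{\ast}$, so that $\phi$ can be taken with target a group of order exactly $\lvert V\rvert$.
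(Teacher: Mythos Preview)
Your proof is correct and follows essentially the same route as the paper: invoke Theorem~\ref{D} to obtain a common root, use Newton's identities (the paper just says ``it can be inductively deduced'') to show the monomial values are exactly the $q$-th roots of unity, then use the generation hypothesis to land the evaluation homomorphism in $\mu_q\cong\mathbb{Z}/q\mathbb{Z}$ and apply Theorem~\ref{Z}. Your packaging via the homomorphism $\Phi(u)=x^{-u}$ is marginally cleaner than the paper's coordinate-by-coordinate argument $x_i=\zeta^{a_i}$, but the content is identical.
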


\begin{proof}
We assume that there exists a tiling and prove that there exists a lattice tiling. From Theorem~\ref{D}, we see that there exists a common nonzero solution to $Q_{V}(x_{1}^{a}, \dots, x_{n}^{a}) = 0$, where $a$ ranges over all integers not divisible by $q$. Let the terms of $Q_{V}$ be the monomials $m_{1},\dots ,m_{q}$, where $m_{1}=1$. If $(x_{1},\dots ,x_{n})\in \mathbb{C}^{n}$ is a common root, then for the corresponding values of $m_{1},\dots ,m_{q} \in \mathbb{C}$, we have 
\begin{equation*}
m_{1}^{a}+\dots +m_{q}^{a}=0
\end{equation*}
for all $a=1,2,\dots ,q-1$. \bigskip

Because it can be inductively deduced that the elementary symmetric polynomials 
\begin{equation*}
\sum_{i_{1}<\dots <i_{t}}m_{i_{1}}\cdots m_{i_{t}}=0
\end{equation*}
for $1\leq t<q$, we get that $m_{1},\dots ,m_{q}\in \mathbb{C}$ are roots of a polynomial which is of the form 
\[ (X - m_1) \cdots (X - m_q) = \sum_{t=0}^q (-1)^t X^{q-t} \sum_{i_{1}<\dots <i_{t}}m_{i_{1}}\cdots m_{i_{t}} = X^q - c. \]
Because $m_{1} = 1$, the constant is $c=1$, and thus $m_{1},\dots ,m_{q}$ is a permutation of $1,\zeta ,\dots ,\zeta ^{q-1}$ where $\zeta =e^{2\pi i/q}$.\bigskip

Note that since $\mathbf{\{v_{1}, \dots ,v_{q-1}\}}$ generate $\mathbb{Z}^{n}$, each of $x_{1},\dots ,x_{n}$ can be represented as a product of powers of $m_{1},\dots ,m_{q}$. This implies that the values $x_{1},\dots,x_{n}\in \mathbb{C}$ are also powers of $\zeta $. Let $x_{i}=\zeta ^{a_{i}}$ for each $i$. From the fact that the values of $m_{1},\dots ,m_{q}\in \mathbb{C}$ is a  permutation of $1,\zeta ,\dots ,\zeta ^{q-1}$, it follows that the restriction of the homomorphism $\phi :\mathbb{Z}^{n}\rightarrow \mathbb{Z} / q \mathbb{Z}$ defined by 
\begin{equation*}
(k_{1},\dots ,k_{n}) \mapsto a_{1}k_{1}+\dots +a_{n}k_{n}
\end{equation*}
restricted to $V$ is a bijection. Applying Theorem~\ref{Z} finishes the proof.
\end{proof}

\section{A Conjecture on Lattice Tilings}

It was proved in the previous section that the existence of a tiling of $\mathbb{Z}^{n}$ by a prime size tile $V$ guarantees the existence of a lattice tiling of $\mathbb{Z}^{n}$. In this section we focus on with Conjecture~\ref{Lattice} which claims that a much stronger statement is true.

\begin{conjecture} \label{Conj}
If $V = \{0,v_{1}, \dots ,v_{q-1}\} \subset \mathbb{Z}^{n}$ is of a prime size $q$ and $\{\mathbf{v}_{1}, \dots ,\mathbf{v}_{q-1}\}$ generate $\mathbb{Z}^{n}$ then each tiling of $\mathbb{Z}^{n}$ by $V$ is a lattice tiling.
\end{conjecture}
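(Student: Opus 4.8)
The plan is to reduce this infinite problem to a factorization problem in the finite group $G = \mathbb{Z}^n/q\mathbb{Z}^n \cong \mathbb{F}_q^n$ and then to attack that factorization with the character/polynomial machinery. First I would note that by Theorem~\ref{period} every $q(\mathbf v - \mathbf w)$ with $\mathbf v, \mathbf w \in V$ is a period of $\mathcal T$; since $\{v_1,\dots,v_{q-1}\}$ generate $\mathbb Z^n$, so do the differences, and therefore the subgroup of periods of $\mathcal T$ contains all of $q\mathbb Z^n$. Hence $\mathcal L$ is invariant under translation by $q\mathbb Z^n$ and descends to a subset $\bar{\mathcal L} = \pi(\mathcal L)$ of $G$, where $\pi : \mathbb Z^n \to G$ is the quotient map. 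Corollary~\ref{CC} (with $a=1$) shows $\pi$ is injective on $V$: if $\mathbf v - \mathbf w \in q\mathbb Z^n$ for distinct $\mathbf v, \mathbf w \in V$, then translating any $\mathbf l \in \mathcal L$ by this period keeps it in $\mathcal L$, contradicting $\mathbf l + (\mathbf v - \mathbf w) \notin \mathcal L$. Thus $\bar V = \pi(V)$ has exactly $q$ elements, $|\bar{\mathcal L}| = q^{n-1}$, and the tiling descends to an exact factorization $G = \bar V \oplus \bar{\mathcal L}$. Since $\mathcal L = \pi^{-1}(\bar{\mathcal L})$, the original tiling $\mathcal T$ is a lattice tiling if and only if $\bar{\mathcal L}$ is a subgroup of $G$, and this is the statement I would aim to prove.

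Next I would reformulate the factorization with characters, which links it directly to $Q_V$. Writing $\widehat{\mathbf 1}_A(\chi) = \sum_{a \in A}\chi(a)$ for $A \subseteq G$, the factorization $G = \bar V \oplus \bar{\mathcal L}$ is equivalent to $\widehat{\mathbf 1}_{\bar V}(\chi)\,\widehat{\mathbf 1}_{\bar{\mathcal L}}(\chi) = 0$ for every nontrivial character $\chi$. Identifying the characters of $\mathbb F_q^n$ with $\mathbf a \in \mathbb F_q^n$ via $\chi_{\mathbf a}(\mathbf x) = \zeta^{\,\mathbf a\cdot \mathbf x}$, where $\zeta = e^{2\pi i/q}$, the value $\widehat{\mathbf 1}_{\bar V}(\chi_{\mathbf a})$ is precisely $Q_V$ evaluated at a point whose coordinates are powers of $\zeta$. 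The root-of-unity argument already used in the proof of Theorem~\ref{T1} then describes the zero set $Z = \{\mathbf a \neq 0 : \widehat{\mathbf 1}_{\bar V}(\chi_{\mathbf a}) = 0\}$ cleanly: since $q$ is prime, a vanishing sum of $q$ powers of $\zeta$ forces the exponents to form a complete residue system modulo $q$, so $\mathbf a \in Z$ exactly when $\mathbf x \mapsto \mathbf a \cdot \mathbf x$ is a bijection of $\bar V$ onto $\mathbb F_q$, equivalently when $\mathbf a \cdot (\mathbf v - \mathbf w)\neq 0$ for all distinct $\mathbf v, \mathbf w \in \bar V$. In particular $Z$ is closed under nonzero scalar multiplication, so $Z \cup \{0\}$ is a union of lines through the origin, and the support of $\widehat{\mathbf 1}_{\bar{\mathcal L}}$ lies inside this cone. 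The goal, that $\bar{\mathcal L}$ be a subgroup of index $q$, is then equivalent to its Fourier support being a single line through the origin, which via Theorem~\ref{Z} corresponds to a linear functional that bijects $\bar V$ onto $\mathbb F_q$.

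The hard part will be exactly this last step: forcing the support of $\widehat{\mathbf 1}_{\bar{\mathcal L}}$ to collapse onto one line. This is a genuinely combinatorial-geometric problem over $\mathbb F_q$, amounting to showing that a spanning factor $\bar V$ of prime order leaves no room for its complement to be a non-subgroup, and it is closely related to Haj\'os/R\'edei-type theorems on when a factor of a finite abelian group must be a subgroup. The spanning hypothesis on $\{v_1,\dots,v_{q-1}\}$ is essential here: it is what makes every coordinate a power of $\zeta$ in the proof of Theorem~\ref{T1}, pinning down the character values, and without it the conclusion fails (for instance a prime-size tile confined to a proper sublattice admits uncountably many non-lattice tilings). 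For small primes I would dispatch the problem by classifying, up to the linear equivalences furnished by Corollary~\ref{blowout}, the spanning $q$-element sets $\bar V \subseteq \mathbb F_q^n$ and verifying in each case that the cone $Z \cup \{0\}$ is too thin to support any non-subgroup factor; this finite analysis is what currently yields the result for all primes $q \le 7$. A general proof would require a uniform structural argument, such as a bound on the number of lines in $Z \cup \{0\}$ or a direct demonstration that $\bar{\mathcal L}$ must be periodic with full period group, and I expect this to be the crux of any complete resolution.
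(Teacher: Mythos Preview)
Your reduction is sound and your honesty about where it stalls is appropriate: the statement is an open conjecture in the paper as well, and the paper likewise proves it only for $q\le 7$. Your passage to the finite quotient $G=\mathbb{F}_q^{\,n}$ via Theorem~\ref{period}, the injectivity of $\pi|_V$ via Corollary~\ref{CC}, and the translation of ``$\mathcal L$ is a lattice'' into ``the Fourier support of $\mathbf 1_{\bar{\mathcal L}}$ is a single line'' are all correct and give a clean reformulation that plugs directly into the Haj\'os--R\'edei circle of ideas.

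The paper, however, takes a genuinely different reduction. Rather than quotienting by $q\mathbb Z^n$ and keeping the tile $V$ arbitrary, it pulls back along the surjection $\mathbb Z^{q-1}\to\mathbb Z^n$, $e_i\mapsto v_i$, and shows (Theorem~\ref{Semicross}) that \emph{every} instance of the conjecture for a given prime $q$ is implied by the single instance $V=\{0,e_1,\dots,e_{q-1}\}\subset\mathbb Z^{q-1}$. This buys a lot: instead of classifying all spanning $q$-subsets of $\mathbb F_q^{\,n}$ for every $n\le q-1$, one analyzes one concrete, highly symmetric tile, and the paper then develops a bespoke combinatorial calculus for it (the ``type'' counts of Theorem~\ref{type} and its corollaries) to push through $q=3,5,7$ by hand. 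Your Fourier framework is more conceptual and makes the connection to factorization theory transparent, but as written it would force a separate case analysis in each dimension; combining the two---first reducing to the semi-cross, then passing to $\mathbb F_q^{\,q-1}$ and using characters---would likely be the cleanest route. Note finally that your sentence ``this finite analysis is what currently yields the result for all primes $q\le 7$'' is asserted, not carried out; the paper's $q=5,7$ arguments are several pages of nontrivial casework, and any alternative verification would need comparable detail.
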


\noindent The following example exhibits that the condition: ``$\{\mathbf{v}_{1}, \dots ,\mathbf{v}_{q-1}\}$ generates $\mathbb{Z}^{n}$" cannot be replaced by a weaker assumption that $V$ is an $n$-dimensional tile.

\begin{example}
If $V=\{\mathbf{0},\mathbf{e}_{1},\dots ,\mathbf{e}_{q-2},2\mathbf{e}_{q-1}\}\subset \mathbb{Z}^{q-1}$, then the tiling 
\begin{align*}
\mathcal{T}=\{\mathbf{x} : \; &2\mid x_{q-1}\text{ and }q\mid \mathbf{x}\cdot
(1,2,\dots ,q-1); \\
\text{ or } & 2\nmid x_{q-1}\text{ and }q \mid \mathbf{x}\cdot (q-1,\dots
,2,1)\}
\end{align*}
is not lattice. 
\end{example}

First we show a rather surprising results that to prove this conjecture one can confine himself/herself to a specific tile. Later, to provide a supporting evidence, we show that the conjecture is true for all primes $q \leq 7$.

\begin{theorem} \label{Semicross}
Let $q$ be a prime. If each tiling of $\mathbb{Z}^{q-1}$ by the semi-cross $V_{q-1}=\{\mathbf{0},\mathbf{e}_{1},\dots ,\mathbf{e}_{q-1}\} $ is lattice, then each tiling of $\mathbb{Z}^{n}$ by a tile $V=\{\mathbf{0}, \mathbf{v}_{1}, \dots ,\mathbf{v}_{q-1}\}$, \ where $\{\mathbf{v}_{1}, \dots ,\mathbf{v}_{q-1}\}$ generate $\mathbb{Z}^{n}$, is a lattice tiling as well.
\end{theorem}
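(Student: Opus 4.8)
The plan is to use the semi-cross $V_{q-1}$ as a universal model for all generating prime-size tiles: every such $V$ is the image of $V_{q-1}$ under a surjective homomorphism, so I will \emph{pull back} a tiling by $V$ to a tiling by $V_{q-1}$, invoke the hypothesis to learn that the pulled-back tiling is lattice, and then \emph{push the subgroup structure forward} to $\mathcal{L}$. First I would define the homomorphism $\psi : \mathbb{Z}^{q-1} \to \mathbb{Z}^{n}$ by $\mathbf{e}_{i} \mapsto \mathbf{v}_{i}$ for $1 \le i \le q-1$. Since $\{\mathbf{v}_{1}, \dots, \mathbf{v}_{q-1}\}$ generates $\mathbb{Z}^{n}$ by assumption, $\psi$ is surjective. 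Moreover, $\psi$ carries the semi-cross $V_{q-1} = \{\mathbf{0}, \mathbf{e}_{1}, \dots, \mathbf{e}_{q-1}\}$ bijectively onto $V = \{\mathbf{0}, \mathbf{v}_{1}, \dots, \mathbf{v}_{q-1}\}$, because both sets consist of $q$ distinct elements and $\psi(\mathbf{0}) = \mathbf{0}$, $\psi(\mathbf{e}_{i}) = \mathbf{v}_{i}$.

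Next, starting from a tiling $\mathcal{T} = \{V + l ; l \in \mathcal{L}\}$ of $\mathbb{Z}^{n}$, I would form the preimage $\widetilde{\mathcal{L}} = \psi^{-1}(\mathcal{L}) \subseteq \mathbb{Z}^{q-1}$ and claim that $\{V_{q-1} + \tilde{l} ; \tilde{l} \in \widetilde{\mathcal{L}}\}$ tiles $\mathbb{Z}^{q-1}$. To verify this, take any $\mathbf{y} \in \mathbb{Z}^{q-1}$ and write $\psi(\mathbf{y}) = \mathbf{v}^{*} + l^{*}$ uniquely with $\mathbf{v}^{*} \in V$ and $l^{*} \in \mathcal{L}$, which is possible since $\mathbb{Z}^{n} = V + \mathcal{L}$ with unique representation. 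Letting $\mathbf{w} \in V_{q-1}$ be the unique preimage of $\mathbf{v}^{*}$ under the bijection $\psi|_{V_{q-1}}$, the element $\tilde{l} = \mathbf{y} - \mathbf{w}$ satisfies $\psi(\tilde{l}) = l^{*} \in \mathcal{L}$, hence $\tilde{l} \in \widetilde{\mathcal{L}}$ and $\mathbf{y} = \mathbf{w} + \tilde{l}$. Uniqueness follows by applying $\psi$ to any representation $\mathbf{y} = \mathbf{w}' + \tilde{l}'$, invoking the uniqueness of the decomposition in $\mathbb{Z}^{n}$ to force $\psi(\mathbf{w}') = \mathbf{v}^{*}$, and then using the injectivity of $\psi|_{V_{q-1}}$ to recover $\mathbf{w}' = \mathbf{w}$. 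Thus $\widetilde{\mathcal{L}}$ yields a tiling of $\mathbb{Z}^{q-1}$ by the semi-cross $V_{q-1}$.

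Finally, by hypothesis this tiling is lattice, so $\widetilde{\mathcal{L}}$ is a subgroup of $\mathbb{Z}^{q-1}$. Since $\psi$ is surjective and $\widetilde{\mathcal{L}} = \psi^{-1}(\mathcal{L})$, we have $\mathcal{L} = \psi(\widetilde{\mathcal{L}})$, which is the homomorphic image of a subgroup and therefore itself a subgroup of $\mathbb{Z}^{n}$. Hence $\mathcal{T}$ is a lattice tiling, as desired.

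I expect the main obstacle to be the careful verification of the tiling property of the pullback $\widetilde{\mathcal{L}}$: both existence and uniqueness of the decomposition $\mathbf{y} = \mathbf{w} + \tilde{l}$ must be established, and these hinge, respectively, on the surjectivity of $\psi$ (so that $l^{*}$ is actually hit) and on $\psi$ restricting to a bijection on the tiles (so that $\mathbf{w}$, and hence $\tilde{l}$, is forced). The generating hypothesis on $\{\mathbf{v}_{1}, \dots, \mathbf{v}_{q-1}\}$ is exactly what supplies surjectivity, and it is also what guarantees the identity $\mathcal{L} = \psi(\widetilde{\mathcal{L}})$ used at the end. I would also note that primality of $q$ plays no role in this reduction itself; it enters only through the hypothesis, which is of interest precisely because semi-cross tilings are expected to be lattice in the prime case.
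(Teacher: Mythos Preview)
Your proof is correct and follows essentially the same route as the paper: define the surjective homomorphism $\mathbb{Z}^{q-1}\to\mathbb{Z}^{n}$ sending $\mathbf{e}_i\mapsto\mathbf{v}_i$, pull back $\mathcal{L}$ to obtain a semi-cross tiling of $\mathbb{Z}^{q-1}$, invoke the hypothesis to get a subgroup, and push forward. Your verification of the pullback tiling property is more explicit than the paper's one-line argument, but the strategy and all the key steps coincide.
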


\begin{proof}
Let $\mathcal{T} = \{V+ l, l\in \mathcal{L}\}$ be a tiling of $\mathbb{Z}^{n}$ by a tile $V=\{\mathbf{v}_{0},\mathbf{v}_{1},\dots ,\mathbf{v}_{q-1}\}\subset \mathbb{Z}^{n}$ of a prime size $q$ such that $\{\mathbf{v}_{1}, \dots ,\mathbf{v}_{q-1}\}$ generate $\mathbb{Z}^{n}$. We show that $\mathcal{T}$ induces a tiling $\mathcal{T}_{0}$ of $\mathbb{Z}^{q-1}$ by the semi-cross $V_{q-1}$. \bigskip

Let $\phi :\mathbb{Z}^{q-1}\rightarrow \mathbb{Z}^{n}$ be a homomorphism defined by 
\begin{equation*}
(x_{1},\dots ,x_{q-1}) \mapsto \sum_{i=1}^{q-1} x_{i} \mathbf{v}_{i}.
\end{equation*}
Because of the condition that $\{\mathbf{v}_{1}, \dots ,\mathbf{v}_{q-1}\}$ generate $\mathbb{Z}^{n}$, the homomorphism $\phi $ is surjective. \bigskip

Let 
\begin{equation*}
\mathcal{T}_{0}=\phi ^{-1}(\mathcal{T})=\bigg\{ (x_{1},\dots ,x_{q-1}) \in \mathbb{Z}^{q-1}:\sum_{i=1}^{q-1}x_{i}\mathbf{v}_{i}\in \mathcal{T} \bigg\} .
\end{equation*}
Since exactly one of $\mathbf{x},\mathbf{x}+\mathbf{v}_{1},\dots ,\mathbf{x}+\mathbf{v}_{q-1}$ is contained in $\mathcal{T}$ for each $\mathbf{x}\in \mathbb{Z}^{n}$, exactly one of $\mathbf{x},\mathbf{x}+\mathbf{e}_{1},\dots ,\mathbf{x}+\mathbf{e}_{q-1}$ is contained in $\mathcal{T}_{0}$ for each $\mathbf{x}\in \mathbb{Z}^{q-1}$. Thus $\mathcal{T}_{0}$ is actually a tiling of $\mathbb{Z}^{q-1}$ by $V_{0}=\{\mathbf{0},\mathbf{e}_{1},\dots ,\mathbf{e}_{q-1}\}$.\bigskip

Because $\phi$ is surjective, the image is $\phi (\mathcal{T}_{0})=\mathcal{T}$. If $\mathcal{T}_{0}$ is a subgroup of $\mathbb{Z}^{q-1}$, then $\mathcal{T}$ also becomes a subgroup of $\mathbb{Z}^{n}$. Thus, it is sufficient to show that $\mathcal{T}_{0}$ is always a lattice
tiling. \bigskip 
\end{proof}
\bigskip

The following conjecture is equivalent to Conjecture~\ref{Conj}.

\begin{conjecture}
Let $V=\{0,\mathbf{v}_{1},\dots ,\mathbf{v}_{q-1}\}\subset \mathbb{Z}^{n}$ of a prime size $q$ tiles $\mathbb{Z}^{n}$ by translates, and $\{\mathbf{v}_{1},\dots ,\mathbf{v}_{q-1}\}$ generate $\mathbb{Z}^{n}$. Then there is a unique tiling, up to a congruency, of $\mathbb{Z}^{n}$ by $V$ and this tiling is lattice.
\end{conjecture}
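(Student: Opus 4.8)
The plan is to split the displayed statement into its two assertions — that every tiling of $\mathbb{Z}^{n}$ by $V$ is lattice, and that any two such tilings are congruent — and to show that the second is cheap once the first is in hand, so that the whole problem collapses to Conjecture~\ref{Conj}. The implication from the displayed statement back to Conjecture~\ref{Conj} is immediate, since an isometric image of a lattice tiling is again lattice. For the converse I would argue as follows. Assume every tiling of $\mathbb{Z}^{n}$ by $V$ is lattice. By Theorem~\ref{Z} each such tiling is the kernel coset of a surjective homomorphism $\phi:\mathbb{Z}^{n}\rightarrow G$ with $\lvert G\rvert=q$; as $q$ is prime, $G\cong\mathbb{Z}/q\mathbb{Z}$, the tiling is determined by $\ker\phi$, and bijectivity of $\phi|_{V}$ forces $(\phi(\mathbf{v}_{1}),\dots,\phi(\mathbf{v}_{q-1}))$ to be a permutation of the nonzero residues $1,\dots,q-1$. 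Thus distinct tilings correspond to distinct such permutations modulo the scaling $\phi\mapsto k\phi$, $k\in(\mathbb{Z}/q\mathbb{Z})^{\ast}$, which fixes $\ker\phi$. To conclude that any two are congruent I would transport the question, via the construction in Theorem~\ref{Semicross}, to the semi-cross, where the coordinate permutations visibly act transitively on these permutations (shown below).

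It therefore suffices to prove Conjecture~\ref{Conj}, and by Theorem~\ref{Semicross} it suffices to treat the semi-cross $V_{q-1}=\{\mathbf{0},\mathbf{e}_{1},\dots,\mathbf{e}_{q-1}\}\subset\mathbb{Z}^{q-1}$. Here Theorem~\ref{period} already does the geometric work: taking $\mathbf{w}=\mathbf{0}$ shows each $q\mathbf{e}_{i}$ is a period, so the tiling set $\mathcal{L}$ is invariant under $q\mathbb{Z}^{q-1}$ and descends to a tiling $\overline{\mathcal{L}}$ of the finite group $(\mathbb{Z}/q\mathbb{Z})^{q-1}$ by the image of $V_{q-1}$. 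The conjecture for the semi-cross is thus equivalent to the purely finite statement that \emph{every} tiling of $(\mathbb{Z}/q\mathbb{Z})^{q-1}$ by $\{\mathbf{0},\mathbf{e}_{1},\dots,\mathbf{e}_{q-1}\}$ is a coset of a subgroup.

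To attack this finite problem I would pass to characters, the finite-group avatar of the polynomial method above: writing $\omega=e^{2\pi i/q}$, the tiling condition $\chi_{\overline{\mathcal{L}}}\ast\chi_{V_{q-1}}\equiv 1$ becomes, after Fourier transform on $(\mathbb{Z}/q\mathbb{Z})^{q-1}$,
\begin{equation*}
\widehat{\chi}_{\overline{\mathcal{L}}}(\xi)\,\widehat{\chi}_{V_{q-1}}(\xi)=0\qquad\text{for every nonzero character }\xi=(c_{1},\dots,c_{q-1}).
\end{equation*}
Since $\widehat{\chi}_{V_{q-1}}(\xi)=1+\omega^{c_{1}}+\dots+\omega^{c_{q-1}}$ and, for prime $q$, the only nonnegative integer relation among $1,\omega,\dots,\omega^{q-1}$ is the full cyclotomic one $1+\omega+\dots+\omega^{q-1}=0$, this factor vanishes exactly when $(c_{1},\dots,c_{q-1})$ is a permutation of $(1,\dots,q-1)$ — precisely the computation already made in the proof of Theorem~\ref{T1}. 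Hence $\widehat{\chi}_{\overline{\mathcal{L}}}$ is supported on $\{0\}$ together with the set $\mathcal{G}$ of these ``permutation characters.'' For a lattice tiling $\ker\psi$ this support is the whole cyclic line $\{0,\psi,2\psi,\dots,(q-1)\psi\}$, every nonzero multiple of a permutation character is again a permutation character, and the coordinate permutations $S_{q-1}$ act simply transitively on $\mathcal{G}$; this gives at once both uniqueness up to congruence and the transparent case promised in the first paragraph.

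The crux — and the step I expect to be the genuine obstacle — is the converse rigidity: showing that a $0$--$1$ tiling function whose Fourier support is confined to $\{0\}\cup\mathcal{G}$ must be the indicator of a single coset, i.e. that its spectrum is forced to collapse onto one cyclic line rather than spreading over $\mathcal{G}$. Beyond the periods supplied by Theorem~\ref{period} there is no further periodicity or convexity to exploit, and the admissible spectral set $\mathcal{G}$ has size $(q-1)!$, so this is a problem of genuine spectral rigidity on a set growing super-exponentially in $q$. For small primes the finite group is small enough that the claim can be settled by a structured exhaustion over the admissible transversals — which is how I would expect the cases $q\le 7$ to be verified — but a proof valid for all primes would seem to require a new structural input, for instance an independence or dimension bound forcing the characters in the support of any tiling to span at most a line. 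Producing such an input is, in my view, where the real difficulty of the conjecture lies.
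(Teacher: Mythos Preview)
Your proposal is sound, and you correctly recognise that the statement is a \emph{conjecture}: the paper does not prove it either, and only argues (in two sentences immediately following the displayed conjecture) that it is equivalent to Conjecture~\ref{Conj}. The paper's equivalence argument is essentially your first paragraph in compressed form: assuming Conjecture~\ref{Conj}, pull two lattice tilings of $\mathbb{Z}^{n}$ back via Theorem~\ref{Semicross} to lattice tilings of $\mathbb{Z}^{q-1}$ by the semi-cross, and observe (via Theorem~\ref{Z}) that any two lattice semi-cross tilings are congruent because the associated homomorphisms $\mathbb{Z}^{q-1}\to\mathbb{Z}/q\mathbb{Z}$ differ by a coordinate permutation and a unit scaling. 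Your version spells this out more carefully, including the observation that $(\phi(\mathbf{e}_{1}),\dots,\phi(\mathbf{e}_{q-1}))$ must be a permutation of the nonzero residues, which the paper leaves implicit.

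Where you genuinely diverge from the paper is in your second through fourth paragraphs. The paper never passes to the finite quotient $(\mathbb{Z}/q\mathbb{Z})^{q-1}$ nor to characters; instead it attacks the semi-cross case for $q\le 7$ by a direct combinatorial enumeration of codeword types (Theorem~\ref{type} and the long case analysis in the proof of Theorem~\ref{Con}). Your Fourier reformulation --- that the tiling indicator on $(\mathbb{Z}/q\mathbb{Z})^{q-1}$ has spectrum contained in $\{0\}\cup\mathcal{G}$ where $\mathcal{G}$ consists of the permutation characters, and that the conjecture amounts to forcing this spectrum onto a single line through the origin --- is a clean and genuinely different restatement. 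It buys a transparent proof of uniqueness in the lattice case (the $S_{q-1}$ action on $\mathcal{G}$), and it isolates the obstruction precisely as a spectral-rigidity question; the paper's combinatorial route, by contrast, gives concrete traction for small $q$ but no visible path to general $q$. Your honest assessment that the rigidity step is the real difficulty, and that for small primes it collapses to a finite check, matches exactly what the paper achieves and leaves open.
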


Indeed, if there were two non-congruent lattice tilings of $%
\mathbb{Z}
^{n\text{ }}$ by $V,$ then the induced tilings of $%
\mathbb{Z}
^{q-1\text{ }}$by semi-crosses would be non-congruent as well. However, by
Theorem 18, all lattice tilings of $%
\mathbb{Z}
^{q-1\text{ }}$ by semi-cross are congruent.
\bigskip

To provide supporting evidence we show that:

\begin{theorem} \label{Con}
Let $V = \{0,v_{1}, \dots ,v_{q-1}\}$ be a tile of a prime size $q \leq 7$ such that $\{\mathbf{v}_{1}, \dots ,\mathbf{v}_{q-1}\}$ generate $\mathbb{Z}^{n}$. Then each tiling of $\mathbb{Z}^{n}$ by $V$ is lattice.
\end{theorem}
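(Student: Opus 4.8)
The plan is to use Theorem~\ref{Semicross} to reduce the statement to a single, highly structured family of tiles. By that theorem it suffices to prove, for each prime $q\le 7$ (that is, $q\in\{2,3,5,7\}$), that every tiling of $\mathbb{Z}^{q-1}$ by the semi-cross $V_{q-1}=\{\mathbf 0,\mathbf e_1,\dots,\mathbf e_{q-1}\}$ is lattice. Fix such a $q$ and a tiling $\mathcal T=\{V_{q-1}+l:l\in\mathcal L\}$; after a translation we may assume $\mathbf 0\in\mathcal L$. First I would pass to a finite torus. Since $q(\mathbf e_i-\mathbf 0)=q\mathbf e_i$ is a period of $\mathcal T$ for every $i$ by Theorem~\ref{period}, the set $\mathcal L$ is invariant under $q\mathbb{Z}^{q-1}$, so $\mathcal T$ descends to a tiling of the group $H=(\mathbb{Z}/q\mathbb{Z})^{q-1}$ by the image $\overline V$ of $V_{q-1}$ (the $q$ points $\mathbf 0,\mathbf e_1,\dots,\mathbf e_{q-1}$ remain distinct mod $q$). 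Writing $\pi:\mathbb{Z}^{q-1}\to H$ and $\overline{\mathcal L}=\pi(\mathcal L)$, one has $\mathcal L=\pi^{-1}(\overline{\mathcal L})$, so $\mathcal L$ is a subgroup of $\mathbb{Z}^{q-1}$ if and only if $\overline{\mathcal L}$ is a subgroup of $H$. As $|\overline{\mathcal L}|=q^{q-2}=|H|/q$, a subgroup of that order is exactly a hyperplane, so the goal becomes: $\overline{\mathcal L}$ is an affine hyperplane $\{x:\sum c_ix_i=d\}$.

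Next I would extract the rigidity forced by Corollary~\ref{CC}. For every $l\in\mathcal L$, every pair $\mathbf v,\mathbf w\in V_{q-1}$, and every $a\in\{1,\dots,q-1\}$ (all coprime to the prime $q$), we have $l+a(\mathbf v-\mathbf w)\notin\mathcal L$; taking $(\mathbf v,\mathbf w)=(\mathbf e_i,\mathbf 0)$ and $(\mathbf e_i,\mathbf e_j)$ and reducing mod $q$ shows that $\overline{\mathcal L}$ meets each line of $H$ in direction $\mathbf e_i$, and each line in direction $\mathbf e_i-\mathbf e_j$, in at most one point. Since there are exactly $q^{q-2}$ lines in each such direction and $|\overline{\mathcal L}|=q^{q-2}$, equality forces \emph{exactly} one point of $\overline{\mathcal L}$ on every line in every difference direction $\mathbf e_i$ and $\mathbf e_i-\mathbf e_j$. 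It is convenient to symmetrize: embedding $H$ as the hyperplane $\{\sum_{i=0}^{q-1}x_i=0\}\subset(\mathbb{Z}/q\mathbb{Z})^{q}$ via $x_0=-(x_1+\dots+x_{q-1})$ turns the direction $\mathbf e_i$ into $\mathbf e_i-\mathbf e_0$, so $\overline{\mathcal L}$ becomes a subset with exactly one point on every line in each root direction $\mathbf e_i-\mathbf e_j$, $0\le i\ne j\le q-1$, a configuration carrying the full symmetric-group symmetry permuting the $q$ coordinates. A short computation (in the spirit of the proof of Theorem~\ref{T1}) shows that the hyperplane solutions are precisely the $\{\sum c_ix_i=d\}$ with $(c_1,\dots,c_{q-1})$ a permutation of the nonzero residues, matching the lattice tilings produced by Theorem~\ref{T1}.

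The remaining and hardest step is to show that these are the \emph{only} such configurations, i.e.\ that every perfect packing with one point per line in all the difference directions is an affine hyperplane. Equivalently, one must show $\overline{\mathcal L}$ is closed under the affine combination $x-y+z$, which (as $\overline{\mathcal L}\ne\emptyset$) makes it a coset of a subgroup and hence, by its cardinality, a hyperplane. Using the one-point-per-line property, $\overline{\mathcal L}$ is the graph of a function in which every single variable, and every difference of two variables, occurs bijectively; I would analyse these functional constraints together with the $S_q$-symmetry to force linearity. For $q=2,3$ this is an immediate finite check: for $q=3$ the configurations are exactly the permutation matrices that also meet every anti-diagonal once, which are precisely the three cosets of the diagonal, all lattice.

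For $q=5$ and $q=7$ the number of a priori configurations grows rapidly, and the classification requires a more delicate combinatorial argument that uses the symmetry to prune cases and the overlapping line constraints to propagate linearity from a few chosen points to all of $\overline{\mathcal L}$. I expect this final classification to be the main obstacle: the rigid structure narrows the possibilities dramatically but does not by itself yield linearity, and the combinatorial explosion in the number of candidate non-linear perfect packings is precisely what confines the present argument to $q\le 7$. Ruling out every such non-linear packing is the crux of the proof.
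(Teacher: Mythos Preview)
Your reduction via Theorem~\ref{Semicross} matches the paper, and passing to the torus $(\mathbb{Z}/q\mathbb{Z})^{q-1}$ via Theorem~\ref{period} is a clean way to make everything finite; the one-point-per-line property you extract from Corollary~\ref{CC} is correct. For $q=2,3$ your argument is essentially the paper's.

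However, there is a genuine gap: you do not actually prove the cases $q=5$ and $q=7$. You set up a framework and then say that ``ruling out every such non-linear packing is the crux of the proof,'' which is precisely the content of the theorem for those primes. Moreover, the rigidity you have isolated may be too coarse: a subset of $(\mathbb{Z}/q\mathbb{Z})^{q-1}$ meeting every line in each direction $\mathbf e_i$ and $\mathbf e_i-\mathbf e_j$ exactly once need not arise from a semi-cross tiling, so in trying to classify all such ``perfect packings'' you may be attacking a strictly larger family of objects than necessary, with no guarantee that the extra $S_q$-symmetry is enough to force linearity.

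The paper's argument for $q=5,7$ is different in kind. It stays in $\mathbb{Z}^{q-1}$ and relies on a tool you do not mention, Theorem~\ref{type}, which gives the \emph{exact} number of codewords of type $[1^k]$ relative to any fixed codeword (for instance, for $q=7$ there are exactly $3$ of type $[1^2]$, $2$ of type $[1^3]$, $3$ of type $[1^4]$, and so on). These exact counts, together with Corollaries~\ref{C0}--\ref{C3}, drive a sequence of claims that pin down the codewords of each small type up to the cyclic shift, identify explicit generators, and verify via a determinant that the lattice they span already has index $q$. That counting machinery is what actually closes the argument; without an analogue of it, your torus/packing approach is a promising reformulation but not yet a proof.
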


\noindent To facilitate our discussion we introduce new notions and notation, and state several auxiliary results. Let $\mathcal{T}=\{V_{q-1}+l;l\in \mathcal{L}\}$ be a tiling of $\mathbb{Z}^{q-1}$ by semi-crosses. We use terminology of coding theory; that is, the elements of $\mathbb{Z}^{q-1}$ will be called words and the elements of $\mathcal{L}$, the centers of semi-crosses in $\mathcal{T}$, will be called codewords. \bigskip 

By a word of type $[m_{1}^{\alpha _{1}}, \dots ,m_{s}^{\alpha _{s}}]$ we mean a word having $\alpha _{1}$ coordinates equal to $m_{1}$, \dots, $\alpha _{s}$ coordinates equal to $m_{s}$, the other coordinates equal to zero. Let $W, Z$ be words, and the word $Z-W$ is of type $[m_{1}^{\alpha_{1}}, \dots ,m_{s}^{\alpha _{s}}]$. Then $Z$ will be called a word of type $[m_{1}^{\alpha_{1}}, \dots ,m_{s}^{\alpha _{s}}]$ with respect to $W$. For $W=O$, we simplify the language and call $Z$ shortly a word of type $[m_{1}^{\alpha _{1}}, \dots ,m_{s}^{\alpha _{s}}]$. Further, we will say that a word $V$ is covered by a codeword $W$ if $V$ belongs to the semi-cross centered at $W$. Finally, two words $A,B$ coincide in $t$ coordinates, if they have the same value in $t$ \underline{\textit{non-zero}} coordinates.\bigskip 

The following theorem constitutes a crucial tool for proving the main result of this section.

\begin{theorem} \label{type}
Let $\mathcal{T}$ be a tiling of $\mathbb{Z}^{p-1}$ by semi-crosses. Then, for a prime $p$ and any $k<p$, we have 
\begin{equation*}
T\bigg(\sum_{i_{1}<\dots <i_{k}}x_{i_{1}}x_{i_{2}}\cdots x_{i_{k}}\bigg)=\frac{\binom{p-1}{k}-(-1)^{k}}{p}+(-1)^{k}T(1).
\end{equation*}
In other words, if $O$ is a codeword then there are $\frac{1}{p}(\binom{p-1}{k}+(p-1)(-1)^{k})$ codewords of type $[1^{k}]$, otherwise there are $\frac{1}{p}(\binom{p-1}{k}-(-1)^{k})$ codewords of type $[1^{k}]$.
\end{theorem}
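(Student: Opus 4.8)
The plan is to turn the statement into a scalar recursion for $t_k := T(\sigma_k)$, where $\sigma_k = \sum_{i_1 < \dots < i_k} x_{i_1}\cdots x_{i_k}$ denotes the $k$-th elementary symmetric polynomial in $x_1,\dots,x_{p-1}$, and then solve that recursion in closed form. First I would record the combinatorial reading of $t_k$: each monomial $x_{i_1}\cdots x_{i_k}$ of $\sigma_k$ is the word $\mathbf{e}_{i_1}+\dots+\mathbf{e}_{i_k}$ of type $[1^k]$, so $t_k$ counts precisely the codewords of type $[1^k]$, while $t_0 = T(1)$ equals $1$ or $0$ according as $O$ is or is not a codeword. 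Hence the closed form I must prove, namely
\[ t_k = \frac{1}{p}\Big(\tbinom{p-1}{k} - (-1)^k\Big) + (-1)^k\, t_0, \]
specializes to the two counts stated in the theorem's second sentence.

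The engine is Theorem~\ref{C} applied to the semi-cross, whose associated polynomial is $Q_{V_{p-1}} = 1 + x_1^{-1} + \dots + x_{p-1}^{-1}$. For $1 \le j < p$ (so $-j$ is coprime to $p = \lvert V_{p-1}\rvert$) this gives $Q_{V_{p-1}}(x_1^{-j},\dots,x_{p-1}^{-j}) = 1 + \rho_j$, where $\rho_j := x_1^j + \dots + x_{p-1}^j$ is the $j$-th power sum. Theorem~\ref{C} with $a=-j$ then yields $T\big(P(1+\rho_j)\big) = P(1,\dots,1)$ for every Laurent polynomial $P$, i.e.
\[ T(P\rho_j) = P(1,\dots,1) - T(P). \]
This is the key identity: although $T$ is only linear, it lets me evaluate $T$ on any product one of whose factors is a power sum $\rho_j$ with $j$ coprime to $p$.

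Next I would invoke Newton's identity $k\,\sigma_k = \sum_{j=1}^{k}(-1)^{j-1}\rho_j\,\sigma_{k-j}$, valid since there are $p-1 \ge k$ variables. Applying $T$, using the displayed identity with $P=\sigma_{k-j}$ (legitimate because $1\le j \le k < p$), and noting $\sigma_{k-j}(1,\dots,1)=\binom{p-1}{k-j}$, I obtain
\[ k\,t_k = \sum_{j=1}^{k}(-1)^{j-1}\Big[\tbinom{p-1}{k-j} - t_{k-j}\Big]. \]
It then remains to verify by induction on $k$ that the asserted closed form satisfies this recursion, the base case being $t_0=T(1)$. This is the only place where computation is needed: after reindexing, both sums collapse through the standard binomial identities $\sum_{i=0}^{k-1}(-1)^i\binom{p-1}{i} = (-1)^{k-1}\binom{p-2}{k-1}$ and $(p-1)\binom{p-2}{k-1}=k\binom{p-1}{k}$, leaving exactly $k$ times the claimed value of $t_k$; dividing by $k$ (possible since $0<k<p$) closes the induction.

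I expect the main obstacle to be conceptual rather than computational: one must recognize that the power-sum factor appearing in Newton's identity is exactly the object that Theorem~\ref{C} permits stripping off under $T$. Once $T(P\rho_j)=P(1,\dots,1)-T(P)$ is available, linearity converts Newton's identity into the scalar recursion above and the remainder is bookkeeping with binomial coefficients. I would take care to flag the two uses of the hypothesis $k<p$ — ensuring each $j\le k$ is coprime to $p$ so that Theorem~\ref{C} applies, and permitting the final division by $k$.
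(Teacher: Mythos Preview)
Your proposal is correct and follows essentially the same route as the paper: both apply $T$ to Newton's identity, use Theorem~\ref{C} (with $a=-(k-j)$, equivalently your $a=-j$ after reindexing) to obtain $T(e_j\rho_{k-j})=\binom{p-1}{j}-T(e_j)$, and finish by induction together with the telescoping identity $\sum_{i=0}^{k-1}(-1)^i\binom{p-1}{i}=(-1)^{k-1}\binom{p-2}{k-1}$. The only cosmetic differences are your choice of base case $k=0$ rather than $k=1$ and your indexing of Newton's identity as $\rho_j\sigma_{k-j}$ rather than $e_j\rho_{k-j}$; one small remark is that the final division by $k$ needs only $k\neq 0$, not $k<p$, since you are verifying a rational identity.
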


\begin{proof}
For convenience, we let 
\begin{equation*}
e_{k}=\sum_{i_{1}<\dots <i_{k}}x_{i_{1}}x_{i_{2}}\cdots x_{i_{k}}
\end{equation*}
denote the elementary symmetric polynomials. We use induction on $k$. 

For $k=1$ we get 
\begin{align*}
T(e_{1}) &= T(x_{1}+ \dots +x_{q-1}+1-1) = T(1+x_{1}+ \dots +x_{q-1})-T(1) \\
& =1-T(1) = \frac{\binom{p-1}{1}-(-1)^{1}}{p}+(-1)^{1}T(1). 
\end{align*}

Suppose now that the identity $T(e_{j})=\frac{1}{p}(\binom{p-1}{j}-(-1)^{j})+(-1)^{j}T(1)$ is true for all $1\leq j < k$. Consider the identity 
\begin{equation*}
\big(\sum x_{i}^{k}\big)-e_{1}\big(\sum x_{i}^{k-1}\big) + \dots + (-1)^{k-1}e_{k-1}\big(\sum x_{i}\big)+(-1)^{k}ke_{k}=0.
\end{equation*}
Note that it is true since all terms of the form $x_{i_{1}} \cdots x_{i_{j}}x_{i_{j+1}}^{k-j}$ are added and subtracted exactly once. It follows from this identity that 
\begin{align*}
0 & =T\big(\sum x_{i}^{k}\big)-T\big(e_{1}\big(\sum x_{i}^{k-1}\big)\big) +\dots +(-1)^{k}kT(e_{k}) \\
& =\sum_{j=0}^{k-1}(-1)^{j}T\Big(e_{j}\Big(\sum x_{i}^{k-j}\Big)\Big)+(-1)^{k}kT(e_{k}) \\
& =\sum_{j=0}^{k-1}(-1)^{j}\bigg[\binom{p-1}{j}-T(e_{j})\bigg]+(-1)^{k}kT(e_{k}) \\
& =\sum_{j=0}^{k-1}\bigg[(-1)^{j}\bigg(\binom{p-1}{j}-\frac{1}{p}\binom{p-1}{j}\bigg)+\frac{1}{p}-T(1)\bigg]+(-1)^{k}kT(e_{k}) \\
& =\frac{p-1}{p}\sum_{j=0}^{k-1}(-1)^{j}\binom{p-1}{j}+k(\tfrac{1}{p}-T(1))+(-1)^{k}kT(e_{k}) \\
& =\frac{p-1}{p}\sum_{j=0}^{k-1}(-1)^{j}\Big(\binom{p-2}{j}+\binom{p-2}{j-1}\Big)+k\big(\tfrac{1}{p}-T(1)+(-1)^{k}T(e_{k})\big) \\
& =(-1)^{k-1}\frac{p-1}{p}\binom{p-2}{k-1}+k\big(\tfrac{1}{p}-T(1)+(-1)^{k}T(e_{k})\big)
\end{align*}
since 
\begin{align*}
T\big(e_{j}\big(\sum x_{i}^{k-j}\big)\big)& =T\big(e_{j}\big(1+\sum
x_{i}^{k-j}\big)\big)-T(e_{j}) \\
& =e_{j}(1,\dots ,1)-T(e_{j})=\binom{p-1}{j}-T(e_{j}).
\end{align*}
Hence we get 
\begin{align*}
T(e_{k})&=(-1)^{k+1}(\tfrac{1}{p}-T(1))+\frac{p-1}{kp}\binom{p-2}{k-1}\\
&=\frac{1}{p}\Big(\textstyle\binom{p-1}{k}-(-1)^{k}\Big)+(-1)^{k}T(1). \qedhere
\end{align*}
\end{proof}

\noindent It is possible to prove a much more general statement than the
above theorem; we skip the proof here as it is quite long and involved, and
we do not need the statement in what follows.

\begin{theorem}
Let $\mathcal{T}$ be a tiling of $\mathbb{Z}^{p-1}$ by semi-crosses, where $p$ is a prime. Then, for any $m_{1}, \dots ,m_{t}$ and $\alpha _{1}, \dots ,\alpha_{t}$, there are constants $C$ and $C^{\prime }$ depending only on $m_{i}$'s and $\alpha_{i}$'s such that the number of codewords of type $[m_{1}^{\alpha _{1}}, \dots ,m_{t}^{\alpha _{t}}]$ is $C$ if $O$ is a codeword, otherwise it is $C^{\prime}$.
\end{theorem}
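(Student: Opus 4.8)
The plan is to reinterpret the count as a value of $T$. A codeword of type $[m_{1}^{\alpha_{1}},\dots,m_{t}^{\alpha_{t}}]$ is exactly a word $w$ of that type lying in $\mathcal{L}$, and there are only finitely many words of a fixed type; hence the number we want equals $T(\Phi)$, where $\Phi=\sum_{w}x^{w}$ is the monomial symmetric polynomial whose terms run over the words $w$ of the given type (since $T(x^{w})=1$ iff $w\in\mathcal{L}$). So it suffices to prove $T(\Phi)=A+B\,T(1)$ with $A,B$ depending only on the type: then $O$ being a codeword means $T(1)=1$ and we take $C=A+B$, while $O\notin\mathcal{L}$ means $T(1)=0$ and $C'=A$. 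First I would record two facts about $T$. By Theorem~\ref{period} with $\mathbf{v}=\mathbf{e}_{i},\mathbf{w}=\mathbf{0}$ the vector $p\,\mathbf{e}_{i}$ is a period of $\mathcal{T}$, so $T(x_{i}^{p}P)=T(P)$ for every $P$; thus $T$ depends only on the residues of exponents modulo $p$. Second, writing $s_{m}=\sum_{i=1}^{p-1}x_{i}^{m}$ for the $m$-th power sum, one has $Q_{V}(x_{1}^{-m},\dots,x_{p-1}^{-m})=1+s_{m}$, so Theorem~\ref{C} with $a=-m$ (prime to $p$ when $p\nmid m$) yields $T(P(1+s_{m}))=P(1,\dots,1)$, i.e. the recursion $T(P\,s_{m})=P(1,\dots,1)-T(P)$; when $p\mid m$ periodicity gives $s_{m}\equiv p-1$ under $T$, hence $T(P\,s_{m})=(p-1)\,T(P)$. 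This is exactly the mechanism used in Theorem~\ref{type}, now with arbitrary power sums in place of the single family $\sum_{i}x_{i}^{k-j}$.

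Next I would pass from $\Phi$ to power sums. By the standard set-partition expansion (which is purely combinatorial and so remains valid for Laurent, possibly negative or repeated, exponents), every product $s_{n_{1}}\cdots s_{n_{\ell}}$ is a sum of augmented monomial symmetric polynomials indexed by the set partitions of $\{1,\dots,\ell\}$, the parts being the block sums of $(n_{1},\dots,n_{\ell})$; Möbius inversion over the partition lattice then writes each $\Phi$ as a rational linear combination, with coefficients depending only on the type, of products of power sums $s_{n}$ (whose indices $n$ are sums of parts of the type). Consequently it is enough to show that $T(s_{n_{1}}\cdots s_{n_{\ell}})$ is affine in $T(1)$ with coefficients depending only on the $n_{j}$. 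I would prove this by induction on the number $\ell$ of factors. The case $\ell=0$ is $T(1)$ itself. For the step, set $R=s_{n_{2}}\cdots s_{n_{\ell}}$, so $R(1,\dots,1)=(p-1)^{\ell-1}$, and peel off $s_{n_{1}}$: if $p\nmid n_{1}$ then $T(s_{n_{1}}R)=R(1,\dots,1)-T(R)$, and if $p\mid n_{1}$ then $T(s_{n_{1}}R)=(p-1)\,T(R)$. In both cases the right-hand side is affine in $T(1)$ by the inductive hypothesis. The reduction of each index $n_{j}$ modulo $p$ before applying a recursion is legitimate precisely because $s_{n}\equiv s_{\bar n}$ in the quotient by $(x_{i}^{p}-1)$, on which $T$ factors.

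Combining the two steps, $T(\Phi)$ is a fixed rational combination, determined solely by the type, of quantities each affine in $T(1)$, hence itself of the form $A+B\,T(1)$ with $A,B$ depending only on the type, as required. I expect the $T$-machinery to be routine given Theorems~\ref{C} and~\ref{period}; the genuinely delicate point will be the symmetric-function bookkeeping in the middle step, namely verifying that the expansion of $\Phi$ into products of power sums has coefficients that depend only on the type and applying it correctly when parts repeat, are negative, or have block sums divisible by $p$. Modulo that bookkeeping, the whole argument is a faithful generalization of the proof of Theorem~\ref{type}.
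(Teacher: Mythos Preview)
The paper does not actually prove this theorem; immediately after stating it the authors write ``we skip the proof here as it is quite long and involved, and we do not need the statement in what follows.'' So there is nothing to compare against, and your argument stands on its own.

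Your proof is correct. The reduction to computing $T(\Phi)$ for the monomial symmetric Laurent polynomial $\Phi$ attached to the type is immediate, and the two $T$-identities you record---periodicity in each variable with period $p$ from Theorem~\ref{period}, and $T(Ps_m)=P(1,\dots,1)-T(P)$ for $p\nmid m$ via Theorem~\ref{C} applied with $a=-m$ to $Q_V(x_1^{-m},\dots,x_{p-1}^{-m})=1+s_m$---are exactly what is needed. The symmetric-function step is also sound: the set-partition identity $\prod_j s_{\lambda_j}=\sum_{\pi}\tilde m_{\lambda(\pi)}$ M\"obius-inverts over the partition lattice to $\tilde m_{\lambda}=\sum_{\pi}\mu(\hat 0,\pi)\prod_{B\in\pi}s_{\sum_{j\in B}\lambda_j}$, and this holds verbatim for arbitrary integer (possibly negative or repeated) exponents since it is a purely combinatorial identity about functions $[\ell]\to[p-1]$ grouped by their kernel. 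When a block sum vanishes the corresponding factor is simply $s_0=p-1$, which your recursion already handles as the $p\mid m$ case; no separate treatment is required. The induction on the number of power-sum factors then gives each $T(\prod_j s_{n_j})$ as an explicit affine function of $T(1)$, and hence so is $T(\Phi)$.

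One cosmetic point: when you say ``coefficients depending only on the type'' you should read this as ``independent of the tiling $\mathcal T$''; the coefficients will in general also depend on $p$ (through the number $p-1$ of variables, for instance via $s_0=p-1$), but $p$ is fixed in the statement, so this is harmless. Modulo that, your argument is precisely the natural extension of the proof of Theorem~\ref{type} from the elementary symmetric polynomials $e_k$ to arbitrary monomial symmetric polynomials, and it delivers the stronger conclusion $T(\Phi)=A+B\,T(1)$ with explicit, computable $A,B$.
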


\noindent Now we state several corollaries of Theorem~\ref{type}.

\begin{corollary}
Let $W$ be a codeword. Then there are $\frac{1}{p}(\binom{p-1}{k}+(p-1)(-1)^{k})$ codewords of type $[1^{k}]$, and also of type $[-1^{k}]$ with respect to $W$.
\end{corollary}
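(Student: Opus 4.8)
The plan is to reduce to the case $W=O$ by translation, read off the type $[1^k]$ count directly from Theorem~\ref{type}, and obtain the type $[-1^k]$ count by re-running the proof of Theorem~\ref{type} with every variable $x_i$ replaced by $x_i^{-1}$.

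First I would translate the tiling so that the chosen codeword becomes the origin. If $\mathcal{T}=\{V_{p-1}+l;\,l\in\mathcal{L}\}$ and $W\in\mathcal{L}$, then $\mathcal{T}'=\{V_{p-1}+(l-W);\,l\in\mathcal{L}\}$ is again a tiling of $\mathbb{Z}^{p-1}$ by the same semi-cross, its codeword set is $\mathcal{L}-W$, and $O=W-W$ is now a codeword. A codeword $Z\in\mathcal{L}$ with $Z-W$ of type $[1^k]$ corresponds bijectively to the codeword $Z-W\in\mathcal{L}-W$ of type $[1^k]$ with respect to $O$, and likewise for type $[-1^k]$. Thus it suffices to count codewords of type $[1^k]$ and of type $[-1^k]$ with respect to $O$ in a tiling where $O$ is a codeword. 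For type $[1^k]$ this is exactly the ``$O$ is a codeword'' case of Theorem~\ref{type}, which gives $\frac{1}{p}\big(\binom{p-1}{k}+(p-1)(-1)^k\big)$ at once.

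For type $[-1^k]$ I would write the count as $T'(\tilde e_k)$, where $T'$ is the map attached to $\mathcal{T}'$ and $\tilde e_k=\sum_{i_1<\dots<i_k}x_{i_1}^{-1}\cdots x_{i_k}^{-1}$ is the $k$-th elementary symmetric polynomial in the variables $x_i^{-1}$. The claim is that the proof of Theorem~\ref{type} carries over verbatim under the substitution $x_i\mapsto x_i^{-1}$. The two facts that drive that proof both survive: the base identity $T'\big(1+x_1^{-1}+\dots+x_{p-1}^{-1}\big)=T'(Q_{V_{p-1}})=1$ is immediate from $T'(RQ_{V_{p-1}})=R(1,\dots,1)$ with $R=1$, since $Q_{V_{p-1}}=1+x_1^{-1}+\dots+x_{p-1}^{-1}$; and the inductive step needs $T'\big(R\cdot(1+x_1^{-a}+\dots+x_{p-1}^{-a})\big)=R(1,\dots,1)$ for $1\le a\le p-1$, which is precisely $T'\big(R\,Q_{V_{p-1}}(x_1^{a},\dots,x_{p-1}^{a})\big)=R(1,\dots,1)$ from Theorem~\ref{C} (each such $a<p$ is coprime to $p$). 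The Newton-type identity used in the proof is an identity in arbitrary commuting variables, so it holds for the $x_i^{-1}$ unchanged. Feeding these in, the identical computation yields $T'(\tilde e_k)=\frac{1}{p}\big(\binom{p-1}{k}-(-1)^k\big)+(-1)^kT'(1)$, and since $O$ is a codeword $T'(1)=1$, so this equals $\frac{1}{p}\big(\binom{p-1}{k}+(p-1)(-1)^k\big)$, the same value as the type $[1^k]$ count.

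The one point that needs care — and the only genuine obstacle — is the bookkeeping of conventions: one must confirm that $Q_{V_{p-1}}$ really is $1+\sum_i x_i^{-1}$, so that it is $\tilde e_k$ (and not $e_k$) that is paired with the covering condition detecting codewords of type $[-1^k]$, and that the blow-up identity of Theorem~\ref{C} supplies exactly the products $R\,Q_{V_{p-1}}(x^a)$ that appear after the substitution. Once the sign conventions are pinned down no new estimate is required; both counts collapse to the single value $\frac{1}{p}\big(\binom{p-1}{k}+(p-1)(-1)^k\big)$, obtained from Theorem~\ref{type} once directly and once through its mirror image.
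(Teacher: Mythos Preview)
Your proof is correct. The reduction to $W=O$ by translation and the $[1^k]$ count via Theorem~\ref{type} match the paper exactly. For the $[-1^k]$ count your argument---re-running the induction of Theorem~\ref{type} with each $x_i$ replaced by $x_i^{-1}$ and invoking Theorem~\ref{C} to justify $T'\big(R\,(1+\sum_i x_i^{-a})\big)=R(1,\dots,1)$---is sound; your check that $Q_{V_{p-1}}=1+\sum_i x_i^{-1}$ under the paper's convention is the right thing to verify and it is correct.

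The paper, however, handles the $[-1^k]$ case in one line rather than by repeating the induction: it simply passes to the reflected tiling $\{V_{q-1}+l:\,l\in-\mathcal L\}$. That $-\mathcal L$ is again a codeword set for the \emph{same} semi-cross follows from Corollary~\ref{blowout} with $a=-1$ (since $-V_{q-1}+\mathcal L=\mathbb Z^{q-1}$ gives $V_{q-1}+(-\mathcal L)=\mathbb Z^{q-1}$). In the reflected tiling $-W$ is a codeword, and codewords of type $[1^k]$ with respect to $-W$ there correspond bijectively to codewords of type $[-1^k]$ with respect to $W$ in the original tiling; the first part then gives the count. So both approaches ultimately rest on the $a=-1$ case of the blow-up machinery, but the paper packages it as a single symmetry of the tiling, while you unroll it inside the inductive computation. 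The paper's route is shorter; yours has the minor advantage of making explicit which identity (Theorem~\ref{C}) is doing the work at each step.
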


\begin{proof}
It suffices to consider the ``shifted" tiling $\mathcal{T}_{W}=\{V_{q-1}+l,l \in \mathcal{L}-W\}$, and $\{V_{q-1}+l,l\in -\mathcal{L\}}$, a reflection of tiling $\mathcal{T}\}$.
\end{proof}

\begin{corollary} \label{C0}
For any $m,k \leq q-1$, the number of codewords of type $[m^{k}]$ with respect to $W$, equals the number of codewords of type $[1^{k}]$ with respect to $W$.
\end{corollary}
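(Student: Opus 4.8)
The plan is to express each of the two counts as the value of the linear map $T$ on an elementary symmetric polynomial, and then to show that the two values coincide because they are produced by one and the same recurrence --- the one already established in the proof of Theorem~\ref{type}. First I would reduce to the case $W=O$. Exactly as in the preceding corollary, passing to the shifted tiling $\mathcal{T}_W=\{V_{q-1}+l : l\in \mathcal{L}-W\}$ turns codewords of type $[m^k]$ (resp. $[1^k]$) with respect to $W$ into codewords of the same type with respect to $O$, and $\mathcal{T}_W$ is again a semi-cross tiling in which $O$ is a codeword; so it suffices to prove the identity with respect to $O$. Writing $e_k$ for the $k$-th elementary symmetric polynomial in $x_1,\dots,x_{q-1}$, the number of codewords of type $[1^k]$ is $T(e_k)$, while the number of codewords of type $[m^k]$ is $T(E_k)$, where $E_k:=e_k(x_1^m,\dots,x_{q-1}^m)$, since $T(x_{i_1}^m\cdots x_{i_k}^m)$ is precisely the indicator that the word carrying $m$ in positions $i_1<\dots<i_k$ and $0$ elsewhere is a codeword. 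The goal is therefore $T(E_k)=T(e_k)$.

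Second, I would re-run the induction of Theorem~\ref{type} with each $x_i$ replaced by $x_i^m$. That computation draws on the tiling only through two facts: the value $T(1)$, which is unaffected by the substitution, and the evaluation $T(e_j\cdot Q_{V_{q-1}}(x^{c}))=e_j(1,\dots,1)=\binom{q-1}{j}$, obtained from Theorem~\ref{C} for the exponents $c=k-j$. Under the substitution, the Newton-type identity relating $E_k$ to the power sums $\sum_i (x_i^m)^{r}$ remains a polynomial identity (now in the variables $x_i^m$), and the only input that must be re-justified is $T(E_j\cdot Q_{V_{q-1}}(x^{m(k-j)}))=E_j(1,\dots,1)=\binom{q-1}{j}$. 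Granting this, the identical algebra yields $T(E_k)=\frac{1}{q}(\binom{q-1}{k}-(-1)^k)+(-1)^k T(1)=T(e_k)$, which is exactly what we want.

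The one point that genuinely uses the hypotheses is the applicability of Theorem~\ref{C} to these twisted exponents, and this is the step I expect to be the crux. Theorem~\ref{C} requires the exponent $m(k-j)$ to be relatively prime to $\lvert V_{q-1}\rvert=q$. Here primality of $q$ does all the work: since $1\le m\le q-1$ we have $\gcd(m,q)=1$, and since $0\le j\le k-1$ and $k\le q-1$ we have $1\le k-j\le q-1$, hence $\gcd(k-j,q)=1$ as well; the product $m(k-j)$ is therefore coprime to $q$, and Theorem~\ref{C} applies with $P=E_j$ and $a=m(k-j)$. (Conceptually this coprimality is the same phenomenon underlying Corollary~\ref{blowout}: the blow-up $mV_{q-1}$ tiles through the very same codeword set $\mathcal{L}$, so the $m$-coordinates ought to be governed by the same counting law as the $1$-coordinates.) Once this coprimality is in place, the closed form above and the reduction to $W=O$ from the first step complete the proof.
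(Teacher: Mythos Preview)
Your proposal is correct and follows essentially the same route as the paper: reduce to $W=O$, write the count of type-$[m^k]$ codewords as $T$ of the elementary symmetric polynomial in the $x_i^m$, and observe that the induction of Theorem~\ref{type} goes through verbatim after the substitution $x_i\mapsto x_i^m$. You spell out the one point the paper leaves implicit---that the exponent $m(k-j)$ remains coprime to $q$ so Theorem~\ref{C} still applies---which is exactly the verification needed. One small slip: you write that in the shifted tiling $\mathcal{T}_W$ the origin is a codeword, but Corollary~\ref{C0} does not assume $W$ is a codeword; this is harmless, since your derived formula $T(E_k)=\tfrac{1}{q}\bigl(\binom{q-1}{k}-(-1)^k\bigr)+(-1)^kT(1)$ matches $T(e_k)$ regardless of the value of $T(1)$.
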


\begin{proof}
It is sufficient to prove the statement only for $W=O.$ To determine the number of codewords of type $[m^{k}]$ we need to
calculate $T(e_{k}^{(m)}),$ where $e_{k}^{(m)}=\sum_{i_{1}<\dots <i_{k}}x_{i_{1}^m}x_{i_{2}^m}\cdots x_{i_{k}^m}$. Thus, $%
e_{k}^{(1)}=e_{k}$ as defined above. It is not difficult to see that after
substituting $y_{i}=x_{i}^{m}$ one can apply verbatim the proof of Theorem \ref{type}. 
\end{proof}

\noindent Substituting $k=q-1$, we get:

\begin{corollary}
\label{C1}For each word $W$, $W$ is a codeword if and only if $W \pm (1,1, \dots ,1)$ is a codeword as well.
\end{corollary}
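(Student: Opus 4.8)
The plan is to obtain Corollary~\ref{C1} by substituting $k = p-1$ into the corollary that records the number of codewords of type $[1^{k}]$ and of type $[-1^{k}]$ with respect to a codeword $W$. First I would make the key bookkeeping observation: since we are tiling $\mathbb{Z}^{p-1}$, a word $Z$ is of type $[1^{p-1}]$ with respect to $W$ exactly when $Z-W$ has all $p-1$ of its coordinates equal to $1$, i.e. exactly when $Z = W + (1,\dots,1)$. Hence there is precisely one word of type $[1^{p-1}]$ with respect to any given $W$, namely $W+(1,\dots,1)$, and likewise precisely one word of type $[(-1)^{p-1}]$, namely $W-(1,\dots,1)$.

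Next, assuming $W$ is a codeword, I would evaluate the count supplied by that corollary. The number of codewords of type $[1^{p-1}]$ with respect to $W$ is
\[ \frac{1}{p}\left(\binom{p-1}{p-1} + (p-1)(-1)^{p-1}\right) = \frac{1}{p}\bigl(1 + (p-1)\bigr) = 1, \]
using that $p$ is an odd prime, so $(-1)^{p-1}=1$. Because there is only one candidate word of this type, this computation forces $W+(1,\dots,1)$ itself to be a codeword; the identical evaluation for type $[(-1)^{p-1}]$ shows $W-(1,\dots,1)$ is a codeword. This establishes that if $W$ is a codeword then so are $W \pm (1,\dots,1)$.

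The equivalence then follows by symmetry: applying the direction just proved to the codeword $W' = W + (1,\dots,1)$ yields that $W' - (1,\dots,1) = W$ is a codeword, and the same reasoning applies to $W - (1,\dots,1)$. Thus each of $W \pm (1,\dots,1)$ being a codeword forces $W$ to be one, completing the biconditional.

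The only real obstacle is conceptual rather than computational: one must recognize that the extreme choice $k = p-1$ collapses the set of words of type $[1^{p-1}]$ with respect to $W$ to the single point $W+(1,\dots,1)$, thereby converting the counting conclusion ``there is exactly one such codeword'' into the membership conclusion ``$W+(1,\dots,1)$ is a codeword.'' It is worth recording that the argument genuinely uses $p$ odd so that the count equals $1$; the prime $p=2$ is degenerate, since the ambient space is then $\mathbb{Z}^{1}$ where every tiling is lattice, and may be set aside.
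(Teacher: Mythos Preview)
Your proof is correct and follows the paper's approach: the paper simply writes ``Substituting $k=q-1$, we get'' before stating the corollary, and your proposal carries out exactly that substitution with full details. One slight difference in presentation is that Theorem~\ref{type} itself already handles the non-codeword case (giving $\frac{1}{p}(\binom{p-1}{p-1}-(-1)^{p-1})=0$ codewords of type $[1^{p-1}]$ when $W$ is not a codeword), so the biconditional can be read off directly without the symmetry step; but your route via the codeword-only corollary plus the observation $W=(W+I)-I$ is equally valid and arguably cleaner.
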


\noindent Applying Corollary \ref{CC} we have

\begin{corollary}
\label{C2}Let $W$ be a codeword. Then, for any $a\leq q-1$, there is no
codeword of type $[a^{1}]$ and of type $[a^{1},-a^{1}]$ with respect to $W$.
\end{corollary}

\noindent Finally,

\begin{corollary} \label{C3}
Let $W$ be a codeword. Then there are $n=\frac{q-1}{2}$ codewords $U_{1}, \dots ,U_{n}$ \ of type $[1^{2}]$, and $n=\frac{q-1}{2}$ codewords $U_{1}^{\prime}, \dots ,U_{n}^{\prime}$ of type $[-1^{2}]$ with respect to $W$. In addition, 
\begin{equation*}
\sum_{i=1}^{n} U_{i}-W=I, \quad \sum_{i=1}^{n}U_{i}^{\prime}-W=-I,
\end{equation*}
where $I = (1, 1, \dots, 1)$. 
\end{corollary}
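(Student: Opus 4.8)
The plan is to split the statement into two nearly independent parts: the enumeration of the codewords of each type, which is immediate from what precedes, and the two additive identities, which require a short combinatorial argument.

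First I would dispose of the counting claim. Applying the corollary following Theorem~\ref{type} with $k=2$ and $p=q$, the number of codewords of type $[1^2]$ (and likewise of type $[-1^2]$) with respect to $W$ equals
\[
\frac{1}{q}\left(\binom{q-1}{2}+(q-1)\right)=\frac{1}{q}\cdot\frac{(q-1)q}{2}=\frac{q-1}{2}=n.
\]
So there are exactly $n$ codewords $U_1,\dots,U_n$ of type $[1^2]$ and $n$ codewords $U_1',\dots,U_n'$ of type $[-1^2]$ with respect to $W$, as asserted.

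For the additive identity I would first reduce to the case $W=O$ by passing to the shifted tiling $\mathcal{T}_W=\{V_{q-1}+l;\,l\in\mathcal{L}-W\}$, in which $O$ is a codeword. Then $U_i-W$ ranges over the codewords of type $[1^2]$ with respect to $O$, that is, over the $0$--$1$ vectors $e_{a_i}+e_{b_i}$ with exactly two nonzero coordinates, and the claim $\sum_i(U_i-W)=I$ becomes $\sum_{i=1}^n(e_{a_i}+e_{b_i})=I$. Thus each $U_i$ determines an unordered pair $\{a_i,b_i\}\subseteq\{1,\dots,q-1\}$, and it suffices to show that these $n$ pairs are pairwise disjoint.

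The key step is this disjointness. Suppose two of the codewords, say $U_i=e_{a}+e_{b}$ and $U_j=e_{a}+e_{c}$ with $b\neq c$, shared the coordinate $a$. Then $U_j-U_i=e_c-e_b$ is a word of type $[1^1,-1^1]$, so $U_j$ is a codeword of type $[1^1,-1^1]$ with respect to the codeword $U_i$. This contradicts Corollary~\ref{C2}, which forbids any codeword of type $[a^1,-a^1]$ with respect to a codeword (take $a=1$). Hence no two of the pairs $\{a_i,b_i\}$ share a coordinate. Since there are $n=\frac{q-1}{2}$ pairwise disjoint pairs, they cover $2n=q-1$ distinct coordinates, which is all of $\{1,\dots,q-1\}$; the pairs therefore form a perfect matching, and $\sum_{i=1}^n(e_{a_i}+e_{b_i})=(1,1,\dots,1)=I$, giving $\sum_i U_i-W=I$. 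The identical argument applied to the type $[-1^2]$ codewords $U_i'$ yields $\sum_i U_i'-W=-I$. The main obstacle is this disjointness step, and the one point needing care is the bookkeeping of ``type with respect to $W$'': one must check that sharing exactly one coordinate forces the difference of two type-$[1^2]$ codewords to be exactly of type $[1^1,-1^1]$ (two coincident pairs would instead make the codewords equal), so that Corollary~\ref{C2} applies with the correct reference codeword. Everything else is a direct count, and no machinery beyond Theorem~\ref{type} and Corollary~\ref{C2} is required.
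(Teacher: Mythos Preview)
Your proposal is correct and follows essentially the same approach as the paper. The only cosmetic difference is in the disjointness step: the paper argues directly that if two of the $U_i-W$ coincided in a coordinate then the semi-crosses centered at $U_i$ and $U_j$ would overlap (they would both cover the same word of type $[1^3]$), whereas you route this through Corollary~\ref{C2} with $a=1$; but for $a=1$ that corollary is precisely the statement that two tiles do not overlap, so the two justifications are the same observation in different packaging.
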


\begin{proof}
By Theorem \ref{type}, $n=\frac{q-1}{2}$. To see the other part of the statement it suffices to note that if two words $U_{i}-W$ and $U_{i}-W$ coincided in a coordinate then the semi-crosses centered at $U_{i}$ and $U_{j}$ would not be disjoint.
\end{proof}

\bigskip

\begin{proof}[Proof of Theorem~\ref{Con}]
By Theorem~\ref{Semicross} it is sufficient to prove that each tiling of $\mathbb{Z}^{q-1}$ by translates of $V=\{0,e_{1}, \dots ,e_{q-1}\}$ is lattice. We start by introducing additional notation and notions. We denote by $I$ the word $(1,1, \dots ,1)$. For a word $W=(a_{1},a_{2},a_{3}, \dots ,a_{q-1})$, by $\pi (W)$ we mean the word obtained by the shift of coordinates of $W$, i.e., $\pi(W) = (a_{2},a_{3}, \dots ,a_{q-1},a_{1})$; further we put $\langle W \rangle := \{W, \pi (W),\pi^{2}(W), \dots ,\pi ^{q-2}(W)\}$, the set of all shifts of $W$. \bigskip 

\noindent $q=2$. Trivially, each tiling of $\mathbb{Z}^{1}$ by the tile $V_{1} = \{0,e_{1}\}$ is lattice. \bigskip 

\noindent $q=3$. By Corollary \ref{C1}, for all integers $n$, $nI$ is a codeword, and from periodicity of $\mathcal{L}$, if $W$ is a codeword then $W + 3n e_{1}$ is a codeword as well. Thus, $\mathcal{L}$ contains a lattice $F$ generated by $I$ and $3e_{1}$. However, $F=\mathcal{L}$ as 
\begin{equation*}
\det \begin{vmatrix} 1 & 1 \\ 3 & 0 \end{vmatrix} =-3.
\end{equation*}

In what follows, there will be several statements formulated for a general codeword $W$ but we will prove them all without loss of generality only for $W=O$. Further, we point out, that with respect to a cyclic property, it suffices to prove statements given below only for one codeword from a set $\langle V \rangle$.

\bigskip

\noindent $q=5$. Let $W$ be a codeword. By Corollary~\ref{C3}, there are two codewords $A, B$ of type $[1^{2}]$, $A+B=I$, and two codewords $C,D$ of type $[-1^{2}]$, $C+D=-I$, with respect to $W$; we denote them by $\mathcal{U}_{2}^{+}(W)$ and $\mathcal{U}_{2}^{-}(W)$, respectively. To simplify the proof, we assume without loss of generality that $\mathcal{U}_{2}^{+}(O)=\langle (1,0,1,0) \rangle$. There are $6$ words of type $[1^{2}]$, each of them covered by a codeword either of type $[1^{2}]$ or of type $[1^{2},-1]$. Hence, as there are $2$ codewords of type $[1^{2}]$, there have to be $4$ codewords of type $[1^{2},-1]$, we denote them by $\mathcal{U}_{3}^{+}(W)$; the same is true for codewords of type $[-1^{2},1]$, they will be denoted $\mathcal{U}_{3}^{-}(W)$. The following straightforward claim will be applied repeatedly in the proof.\bigskip 

\noindent \textbf{Claim 1.} Each codeword in $\mathcal{U}_{2}^{+}(W)$ determines uniquely the other codeword in $\mathcal{U}_{2}^{+}(W)$. In addition, codewords in $\mathcal{U}_{2}^{+}(W)$, and one codeword in $\mathcal{U}_{3}^{+}(W)$, determine uniquely the other codewords in $\mathcal{U}_{3}^{+}(W)$. The same is true for the ``$-$'' part. \bigskip 

\noindent For example, if $\mathcal{U}_{2}^{+}(O) = \langle (1,0,1,0) \rangle$, then we have 
$\mathcal{U}_{3}^{+}(O)=\langle (1,1,-1,0) \rangle$ or $\mathcal{U}_{3}^{+}(O)=\langle (1,1,0,-1) \rangle$. Thus, one codeword in $\mathcal{U}_{3}^{+}(O)$ determines in a unique way the set $\mathcal{U}_{3}^{+}(O)$. \bigskip 

\noindent \textbf{Claim 2.} Let $W$ be a codeword. Then, for each $A\in \mathcal{U}_{2}^{+}(W)$, 
\begin{equation*}
\mathcal{U}_{2}^{+}(W+A)=\mathcal{U}_{2}^{+}(W).
\end{equation*}

\begin{proof}[Proof of Claim 2] 
Let $W=O$ without loss of generality. As $I$ is a codeword, it means that $(0,1,0,1)\in \mathcal{U}_{2}^{+}((1,0,1,0))$, and therefore $\mathcal{U}_{2}^{+}((1,0,1,0)) = \mathcal{U}_{2}^{+}(O)$. Further, since $\langle (1,0,1,0) \rangle -I = -\langle (1,0,1,0) \rangle$ are codewords, we get
\begin{equation*}
\mathcal{U}_{2}^{-}(O) = -\mathcal{U}_{2}^{+}(O),
\end{equation*}
which implies 
\begin{equation*}
\mathcal{U}_{2}^{-}(W+A) = \mathcal{U}_{2}^{-}(W), \text{ for any codeword }A \in \mathcal{U}_{2}^{-}(W).
\end{equation*}
It follows that $\mathcal{U}_2^+ (W+A) = \mathcal{U}_2^- (W)$. 
\end{proof}

\noindent Then, by a straightforward induction, tiling $\mathcal{T}$ contains a lattice generated by $\langle (1,0,1,0) \rangle$. Now we will show that $\mathcal{T}$ contains a lattice generated by $\langle (1,0,1,0) \rangle$ and any codeword in $\mathcal{U}_{3}^{+}(O)$. \bigskip 

\noindent \textbf{Claim 3.} $\mathcal{U}_{3}^{-}(O)=-\mathcal{U}_{3}^{+}(O)$.

\begin{proof}[Proof of Claim 3] 
Assume by contradiction that $\mathcal{U}_{3}^{-}(O)\neq - \mathcal{U}_{3}^{+}(O)$, say without loss of generality, $\mathcal{U}_{3}^{+}(O) = \langle (1,1,-1,0) \rangle$, and $\mathcal{U}_{3}^{-}(O) = -\langle (1,1,0,-1) \rangle$. Then, $U=(1,1,-1,0) \in \mathcal{U}_{3}^{+}(O)$, and $V=(1,-1,-1,0) \in \mathcal{U}_{3}^{-} (O)$. However, this contradicts Corollary~\ref{C2} as $U-V$ is of type $[2^{1}]$. \bigskip 
\end{proof}

\noindent \textbf{Claim 4.} For any $A\in \mathcal{U}_{2}^{+}(W)$, 
\begin{equation*}
\mathcal{U}_{3}^{+}(W+A)=\mathcal{U}_{3}^{+}(W),
\end{equation*}
and, for any $A \in \mathcal{U}_{3}^{+}(W)$,
\begin{equation*}
\mathcal{U}_{2}^{+}(W+A)=\mathcal{U}_{2}^{+}(W).
\end{equation*}

\begin{proof}[Proof of Claim 4] 
Let $W=O$. Assume without loss of generality that $\mathcal{U}_{3}^{+}(O)=\langle (1,1,-1,0) \rangle$. It is sufficient to notice that $B+C-I$ is a codeword for each $B \in \langle (1,0,1,0) \rangle$, and $C \in \langle (1,1,-1,0) \rangle$. \bigskip 
\end{proof}

\noindent In view of Claim~3, Claim~4 is true also for the ``$-$'' part. \bigskip 

\noindent \textbf{Claim 5.} Let $\mathcal{U}_{2}^{+}(W)=\langle (1,0,1,0) \rangle$. Then $\mathcal{U}_{3}^{+}(W+A)=\mathcal{U}_{3}^{+}(W)$ for any $A \in \mathcal{U}_{3}^{+}(W)$. 

\begin{proof}[Proof of Claim 5]
Assume that $\mathcal{U}_{3}^{+}(A) \neq \mathcal{U}_{3}^{+}(O)$ for an $A\in \mathcal{U}_{3}^{+}(O)$, say without loss of generality that $\mathcal{U}_{3}^{+}(O)=\langle (1,1,-1,0) \rangle$, and $\mathcal{U}_{3}^{+}((1,1,-1,0))= \langle (1,1,0,-1) \rangle$. Then $(1,1,-1,0) + (0,-1,1,1) = (1,0,0,1)$ would be a codeword, a contradiction since $\mathcal{U}_2^+(W) = \langle (1,0,1,0) \rangle$. \bigskip 
\end{proof}

As above, by Claim~3, Claim 5 is true also for the ``$-$'' part. Assume without loss of generality that $\langle (1,1,-1,0) \rangle$ are codewords. Let $n,m,k$ be integers. By Claim 2, $W=n(1,0,1,0)+m(0,1,0,1)$ is a codeword, and $\mathcal{U}_{2}^{\pm} (W) = \pm \langle (1,0,1,0) \rangle$. By Claim 4 and Claim 5, and a straightforward induction, $W+k(1,1,-1,0)$ is a codeword. Finally, taking into account that $\mathcal{T}$ is periodic with $5e_{1}$, we have that $\mathcal{T}$ contains a lattice $\mathcal{R}$ generated by $\langle (1,0,1,0) \rangle$, $(1,1,-1,0)$, and $5e_{1}$. The determinant of the matrix whose rows are the given four vectors equals $\pm 5$, thus $\mathcal{R}$ contains all codewords of $\mathcal{T}$. The proof for $q=5$ is complete.

\bigskip

\noindent $q=7$. By Theorem~\ref{type}, for each codeword $W$ there are $3$ codewords of type $[1^{2}]$ and $2$ codewords of type $[1^{3}]$ with respect to $W;$ we denote these codewords by $\mathcal{U}_{2}(W)$ and $\mathcal{U}_{3}(W)$, respectively.

\begin{lemma}
Let $W$ be a codeword. Then 
\begin{equation*}
\sum_{V\in \mathcal{U}_{2}(W)} V = \sum_{Z\in \mathcal{U}_{3}(W)} Z = (1,1,1,1,1,1).
\end{equation*}
\end{lemma}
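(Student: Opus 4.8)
The plan is to prove both identities for $W=O$ only, which suffices by the convention (stated just before the case $q=5$) that every such statement may be proved for $W=O$ without loss of generality; translating the tiling by $-W$ makes $O$ a codeword and carries the codewords of type $[1^k]$ with respect to $W$ onto those of type $[1^k]$ with respect to $O$. For the first identity I would argue exactly as in Corollary~\ref{C3}: by Theorem~\ref{type} there are precisely three codewords of type $[1^2]$, and any two of them must have disjoint supports, since if they agreed in a single nonzero coordinate their difference would be a codeword of type $[1,-1]$ with respect to a codeword, contradicting Corollary~\ref{C2} (agreement in both coordinates would force them equal). Three pairwise disjoint two-element supports partition $\{1,\dots,6\}$, so each coordinate equals $1$ in exactly one of the three codewords and the sum is $I$.

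For the second identity, let $Z_1,Z_2$ be the two codewords of type $[1^3]$ guaranteed by Theorem~\ref{type}, and write $P_1,P_2,P_3$ for the two-element supports of the type-$[1^2]$ codewords $C_1,C_2,C_3$, which partition $\{1,\dots,6\}$ by the previous step. I would first show that the support of each $Z_i$ contains none of the pairs $P_k$: otherwise $Z_i=C_k+e_j$ for some $j$, so $Z_i$ would lie in the semi-cross centered at the codeword $C_k$ while also lying in the semi-cross centered at itself, contradicting uniqueness of covering. Hence each $Z_i$ meets every pair in exactly one coordinate, i.e. both are transversals of $\{P_1,P_2,P_3\}$. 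The goal then reduces to showing $Z_1$ and $Z_2$ have disjoint supports, for then $2\cdot 3=6$ coordinates are each covered once and $Z_1+Z_2=I$. Two transversals cannot agree in exactly two coordinates, since their difference would be of type $[1,-1]$, excluded by Corollary~\ref{C2}, and agreement in all three forces $Z_1=Z_2$.

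The main obstacle is to exclude the remaining possibility that $Z_1$ and $Z_2$ agree in exactly one coordinate, in which case $Z_1-Z_2$ is of type $[1^2,(-1)^2]$, a difference that none of the stated corollaries forbids, since Corollary~\ref{C2} only rules out differences of type $[a]$ and $[a,-a]$. To close this case I would run a covering census of the $\binom{6}{3}=20$ words of type $[1^3]$: two are self-covered (namely $Z_1,Z_2$), the twelve three-subsets containing some pair $P_k$ are covered by the corresponding $C_k$, and the remaining six transversals are each covered by a codeword of type $[1^3,-1]$, yielding exactly six such codewords. Assuming $Z_1,Z_2$ share one coordinate, I would then examine the forced type-$[1^3,-1]$ codewords attached to those six transversals and derive a contradiction by applying Corollary~\ref{C2} to suitable differences among these codewords and $Z_1,Z_2$ (equivalently, showing that consistency of the tiling forces $Z_2=I-Z_1$, the antipodal transversal). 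I expect this elimination, rather than the two routine partition arguments, to be where the real work of the lemma lies.
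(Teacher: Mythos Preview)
Your plan for the $\mathcal{U}_2$ identity and the reduction of the $\mathcal{U}_3$ identity to ruling out a single shared coordinate are correct and match the paper. The transversal observation---that no $Z_i$ can contain any of the pairs $P_k$---is correct and is a pleasant structural fact the paper does not isolate.

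Where you diverge is the final elimination, and this is substantive. You propose a census of the twenty words of type $[1^3]$, extract six codewords of type $[1^3,-1]$, and hope to reach a contradiction among those via Corollary~\ref{C2}. The paper instead climbs one level higher. It establishes (as a claim inside the lemma, reused later in the section) two counts: there are exactly three codewords of type $[1^4]$, each coordinate being zero in precisely one of them, and exactly six codewords of type $[1^4,-1^1]$, each coordinate appearing as the $-1$ entry exactly once. The contradiction then falls out: if $Z_1,Z_2$ share one coordinate, some other coordinate (the second, in the paper's normalization $Z_1=(1,0,1,0,1,0)$, $Z_2=(1,0,0,1,0,1)$) is zero in both; the five words of type $[1^4]$ with that coordinate zero are then already covered by $Z_1$, $Z_2$, and the unique type-$[1^4]$ codeword with a zero there, yet the type-$[1^4,-1]$ codeword with $-1$ in that slot would cover one of them a second time.

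Your route through $[1^3,-1]$ codewords and Corollary~\ref{C2} alone is not obviously sufficient: the only forbidden differences are of type $[a^1]$ and $[a^1,(-a)^1]$, whereas the differences arising among the six $[1^3,-1]$ codewords and $Z_1,Z_2$ are generically of richer type (for instance $[1^2,(-1)^2]$ or $[\pm 2,\mp 1]$), which~\ref{C2} does not exclude. Getting a contradiction at that level would require further structural constraints, and it is unclear you avoid having to move to the $[1^4]$ level after all. You are right that this step is where the real work lies; the paper's answer is to do that work one rung up, where a simple double-cover argument finishes it.
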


\begin{proof}[Proof of Lemma] 
Let $W=O$. The statement is obvious for $\mathcal{U}_{2}(W)$. To show it for $\mathcal{U}_{3}(W)$, we need in fact to prove that the two codewords $Z_{1}$ and $Z_{2}$ of type $[1^{3}]$ do not coincide in any coordinate. $Z_{1}$ and $Z_{2}$ cannot coincide in two coordinates, otherwise the two semi-crosses centered at $Z_{1}$ and $Z_{2}$ would have a non-empty intersection. So assume by contradiction that $Z_{1}$ and $Z_{2}$ coincide in exactly one coordinate. Let, without loss of generality, $Z_{1}=(1,0,1,0,1,0)$ and $Z_{2}=(1,0,0,1,0,1)$. Now we show that \bigskip 

\noindent \textbf{Claim 6.} For each codeword $W$ there are 
\begin{itemize}
\itemsep 0em
\item[(i)] $3$ codewords of type $[1^{4}]$; for each $1\leq i\leq 6$, exactly one
of the three codewords has the $i$-th coordinate equal to $0;$ 
\item[(ii)] $6$ codewords of type $[1^{4},-1^{1}]$ with respect to $W$.
No two of these codewords coincide in the coordinate whose value is $-1$.
\end{itemize}

\begin{proof}[Proof of Claim 6] 
Let $W=0$. 
\begin{itemize}
\itemsep 0em
\item[(i)] There is no codeword of type $[1^{5}]$. So all $6$ words of this type are covered by the codewords of type $[1^{4}]$, the statement follows. 
\item[(ii)] There are $15$ words of type $[1^{4}]$; $3$ of them are codewords, $6$ of them are covered by codewords of type $[1^{3}]$ (regardless of in how many coordinates $Z_{1}$ and $Z_{2}$ might coincide). Thus the remaining $6$ words have to be covered by codewords of type $[1^{4},-1^{1}]$. Clearly, each codeword of type $[1^{4},-1^{1}]$ covers only one word of type $[1^{4}]$, thus there are $6$ codewords of type $[1^{4},-1^{1}]$. Two semi-crosses centered at codewords of type $[1^{4},-1^{1}]$ coinciding in the coordinate whose value is $-1$ would have a non-empty intersection. \qedhere
\end{itemize}
\end{proof}

\noindent Now we are ready to finish the proof of our lemma. Let $Z$ be a codeword of type $[1^{4}]$ whose second coordinate equals $0$. The three codewords $Z$, $Z_{1}$, and $Z_{2}$ cover all $5$ words of type $[1^{4}]$ whose second coordinate equals to $0$. However, by Claim~6, there is a codeword $W$ of type $[1^{4},-1^{1}]$ whose second coordinate equals $-1$. Clearly, $W$ also covers a word of type $[1^{4}]$ whose second coordinate is $0$, a contradiction. The proof is complete. \bigskip 
\end{proof}

We note that any codeword of type $[1^{3}]$ coincides with each codeword of type $[1^{2}]$ in precisely one coordinate. We will assume without loss of generality that $\mathcal{U}_{2}(O)=\langle (1,0,0,1,0,0) \rangle$, and $\mathcal{U}_{3}(O) = \langle (1,0,1,0,1,0) \rangle$. Also we set, $B_{1}=(1,0,0,1,0,0)$, and $B_{2}=\pi(B_{1}), B_{3}=\pi ^{2}(B_{2})$.\bigskip 

\noindent \textbf{Claim 7.} Let $W$ be a codeword. Then $\mathcal{U}_{2}(W+A) = \mathcal{U}_{2}(W)$ for all $A\in \mathcal{U}_{2}(W)$. In particular, the $3$ codewords of type $[1^{4}]$ are $\langle (0,1,1,0,1,1) \rangle$, and the $3$ codewords of type $[2^{2}]$ are $2 \cdot \mathcal{U}_{2}(O)= \langle (2,0,0,2,0,0) \rangle.$ Further, tiling $\mathcal{T}$ contains a lattice generated by codewords $\langle (1,0,0,1,0,0)\rangle$. 

\begin{proof}[Proof of Claim 7] 
Let $W=O$. To prove the statement it
suffices to show that the $3$ codewords of type $[1^{4}]$ are $%
\,\langle (0,1,1,0,1,1) \rangle$.\bigskip 

By Corollary~\ref{C0}, there are $3$ codewords of type $[2^{2}]$. We show that these codewords are $2 \cdot \langle (1,0,0,1,0,0) \rangle =\langle (2,0,0,2,0,0) \rangle$. Assume that there is codeword $W$ of type $[2^{2}]$ such that $W \notin \langle (2,0,0,2,0,0) \rangle$. Let $V$ be a codeword of type $[1^{2},-1]$ such that $V$ and $\frac{1}{2} W$ coincide in two coordinates. Then $W-V=Z$ is of type $[1^{3}]$. Hence, $Z \in \mathcal{U}_{3}(V)$, and $Z^{\prime } = I-Z \in \mathcal{U}_{3}(V)$ as well. Thus, $V+Z^{\prime}$ is a codeword, and it is of type $[1^{5},-1]$. This is a contradiction as $I$ is a codeword. Therefore, $B_{i}\in \mathcal{U}_{2}(B_{i})$, $i=1,2,3$. Let $C_{2},C_{3}$ be the other two codewords in $\mathcal{U}_{2}(B_{1})$. Then $C_{2}+C_{3}=(0,1,1,0,1,1)$ and $B_{1}+C_{i}$, $i=2,3$, are codewords of type $[1^{4}]$. By Claim 6, $(0,1,1,0,1,1)$ has to be a codeword. However, this means that $B_{2} \in \mathcal{U}_{2}(B_{3})$, and therefore also $B_{1}\in \mathcal{U}_{2}(B_{3})$; that is, $\mathcal{U}_{2}(B_{3})=\mathcal{U}_{2}(O)$. By the same argument we get $\mathcal{U}_{2}(B_{i})=\mathcal{U}_{2}(O)$ for $i=2,3$. But then $(1,1,0,0,1,1)$ is a codeword, a contradiction. The proof of the first part is complete. Clearly, as $\mathcal{U}_{2}(B_{1})=\mathcal{U}_{2}(O)$, $\langle (1,1,0,1,1,0) \rangle$ are the three codewords of type $[1^{4}]$. The final part of the proof follows by a straightforward induction.\bigskip 
\end{proof}

Let $\mathcal{U}_{2}(W) = \langle (1,0,0,1,0,0) \rangle$, and $\mathcal{U}_{3}(W)=\langle (1,0,1,0,1,0) \rangle$. Then the three codewords of type $[1^{4}]$ are $\langle (1,1,0,1,1,0) \rangle$. Further, the $6$ codewords of type $[1^{4},-1^{1}]$ with respect to $W$ are either $\langle (1,1,1,1,-1,0) \rangle$, or $\langle (1,1,1,1,0,-1) \rangle$. We denote these codewords by $\mathcal{U}_{4}(W)$. \bigskip 

\noindent \textbf{Claim 8.} Let $W$ be a codeword, $\mathcal{U}_{2}(W) = \langle (1,0,0,1,0,0) \rangle$, and $\mathcal{U}_{3}(W)=\langle (1,0,1,0,1,0) \rangle$. If $\mathcal{U}_{4}(W)=\langle (1,1,1,1,-1,0) \rangle$, then the $6$ codewords of type $[1^{3},-1]$ are $\langle (1,1,1,0,0,-1) \rangle$, and the $12$ codewords of type $[1^{2},-1^{1}]$ are $\langle (1,1,0,-1,0,0) \rangle$, and $\langle (1,-1,1,0,0,0) \rangle$. If $\mathcal{U}_{4}(W) = \langle (1,1,1,1,0,-1) \rangle$, then the $6$ codewords of type $[1^{3},-1]$ are $\langle (1,1,1,-1,0,0) \rangle$, and the $12$ codewords of type $[1^{2},-1^{1}]$ are $\langle (1,1,0,0,-1,0) \rangle$, and $\langle (1,-1,1,0,0,0) \rangle$.

\begin{proof}[Proof of Claim 8]
Let $W=O$. The six words of type $[1^{4}]$ that are covered by codewords of type $[1^{4},-1]$ are $\langle (1,1,1,1,0,0) \rangle$. By Claim~6, the codewords of type $[1^{4},-1]$ are either $\langle (1,1,1,1,-1,0) \rangle$ or $\langle (1,1,1,1,0,-1) \rangle$. Assume the former case. The six words of type $[1^{3}]$ that are covered by codewords of type $[1^{3},-1]$ are $\langle (1,1,1,0,0,0) \rangle$. Thus, the codewords of type $[1^{3},-1]$ have to be $\langle (1,1,1,0,0,-1) \rangle$, otherwise the semi-crosses centered at $\langle (1,1,1,1,-1,0) \rangle$ and at codewords of type $[1^{3},-1]$ would not be disjoint. Finally, the $12$ words of type $[1^{2}]$ covered by codewords of type $[1^{2},-1]$ are $\langle (1,1,0,0,0,0) \rangle$ and $\langle (1,0,1,0,0,0) \rangle$. Because of codewords $\langle (1,1,1,0,0,-1) \rangle$, the codewords covering words $\langle (1,1,0,0,0,0) \rangle$ and $\langle (1,0,1,0,0,0) \rangle$ are the codewords $\langle (1,1,x,y,0,0) \rangle$ and $\langle (1,z,1,v,w,0) \rangle$, where one of $x,y$ is $-1$ and the other equals $0$, also one of $z,v,w$ equals $-1$, the other two are $0$'s. With respect to $(1,0,1,0,1,0)$, it is $w=0$. If $x=-1$, then semi-crosses $\langle (1,1,x,y,0,0) \rangle$ and $\langle (1,z,1,v,0,0) \rangle$ would intersect. For the same reason it is impossible to have $y=v=-1$. Using the same ideas one can show the other part of the claim for the latter case. \bigskip 
\end{proof}

\noindent \textbf{Claim 9.} If $W$ is a codeword described in Claim~8, then $-W$ is a codeword as well.

\begin{proof}[Proof of Claim 9] 
We know that if $W$ is a codeword then $W \pm I$ as also a codeword. Hence, 
\begin{equation*}
\langle (1,1,0,1,1,0) \rangle - I = -\langle (1,0,0,1,0,0) \rangle = - \mathcal{U}_{2}(W)
\end{equation*}
are codewords. Further, 
\begin{equation*}
\mathcal{U}_{3}(W)-I=-\langle (1,0,1,0,1,0) \rangle =-\mathcal{U}_{3}(W)
\end{equation*}
are codewords as well. Applying Claim~8 to $-\mathcal{U}_{2}(W)$ and $-\mathcal{U}_{3}(W)$ we need only to show that if $\langle (1,1,1,1,-1,0) \rangle$ ($\langle (1,1,1,1,0,-1) \rangle$) are codewords, then $-\langle (1,1,1,1,-1,0) \rangle$, ($-\langle (1,1,1,1,0,-1) \rangle$) are again codewords. Assume by contradiction that $\langle (1,1,1,1,-1,0) \rangle$ and $-\langle (1,1,1,1,0,-1)\rangle $ are codewords. Then, by Claim~8, $\langle (1,1,1,0,0,-1) \rangle$ and $\langle (-1,-1,-1,1,0,0) \rangle$ are codewords as well. Set $Z = (1,1,1,0,0,-1)$, and $U = (-1,-1,1,0,0,-1)$. Then $Z - U = (2,2,0,0,0,0)$ is a codeword of type $[2^{2}]$ with respect to $U$. This in turn implies, see Claim~7, that $(1,1,0,0,0,0)$ is a codeword of type $[1^{2}]$ with respect to $U$, that is, $U+(1,1,0,0,0,0)=(0,0,1,0,0,-1)$ is a codeword. However, this contradicts Corollary~29. The proof is complete. \bigskip 
\end{proof}

\noindent \textbf{Claim 10.} $\mathcal{U}_{3}(W+B)=\mathcal{U}_{3}(W)$ for all $B \in \mathcal{U}_{2}(W)$, and $\mathcal{U}_{2}(W+B)=\mathcal{U}_{2}(W)$ for all $B \in \mathcal{U}_{3}(W)$.

\begin{proof}[Proof of Claim 10] 
Let $W=O$. It suffices to note that 
\textit{for each }$U\in \langle (1,0,1,0,1,0) \rangle$, and each $Z \in \langle (1,0,0,1,0,0) \rangle$, $U+Z-I$ is a codeword.\bigskip 
\end{proof}

\noindent \textbf{Claim 11.} $\mathcal{U}_{3}(W+B)=\mathcal{U}_{3}(W)$ for all $B\in \mathcal{U}_{3}(W)$.

\begin{proof}[Proof of Claim 11] Let $W=O$. Assume by contradiction that $\mathcal{U}_{3}(B)\neq \mathcal{U}_{3}(O)$. Then there is $C$ in $\mathcal{U}_{3}(B)$ such that $B_{4}=(1,0,1,0,1,0)$ coincides with $C$ only in one coordinate. This in turn implies that $B_{4}+C-I$ is of type $[1,-1]$, a contradiction.\bigskip 
\end{proof}

\noindent By Claim 8, 10, and 11 we have that the tiling $\mathcal{T}$ contains a lattice generated by $\langle (1,0,0,1,0,0) \rangle$ and $(1,0,1,0,1,0)$. \bigskip

\noindent \textbf{Claim 12.} Let $W$ be a codeword. Then $\mathcal{U}_{2}(W+B)=\mathcal{U}_{2}(W)$, $\mathcal{U}_{3}(W+B)=\mathcal{U}_{2}(W)$, and $\mathcal{U}_{4}(W+B)=\mathcal{U}_{4}(W)$ for all $B\in \mathcal{U}_{4}(W)$.

\begin{proof}[Proof of Claim 12]
As this proof uses the same techniques as presented above we leave it for the reader. \bigskip 
\end{proof}

With Claim~12 in hands we know that $\mathcal{T}$ contains a
lattice generated by $\langle (1,0,0,1,0,0) \rangle ,(1,0,1,0,1,0)$, and a vector from $%
\mathcal{U}_{4}(W)$, say either $(1,1,1,1,-1,0)$ or $(1,1,1,1,0,-1)$. In
addition, $\mathcal{T}$ is periodic with $7e_{1}$. The determinant of a
matrix whose rows are these $7$ vectors is $\pm 7;$ that is, the lattice
contains all codewords of $\mathcal{T}$.\bigskip 
\end{proof}

As an immediate consequence of Theorem~\ref{Z} we get:

\begin{corollary}
let $V=\{0, \mathbf{v}_{1},\dots, \mathbf{v}_{q-1} \} \subset \mathbb{Z}^{n}$ of a prime size $q\leq 7$ tiles $\mathbb{Z}^{n}$ by translates, and $\{\mathbf{v}_{1},\dots ,\mathbf{v}_{q-1}\}$ generate $\mathbb{Z}^{n}.$ Then there is a unique tiling, up to a congruency, of $\mathbb{Z}^{n}$by $V$ and this tiling is lattice. In particular, for a prime $q \leq 7$, there is a unique tiling, up to a congruency, of $\mathbb{Z}^{q-1}$
by semi-crosses.
\end{corollary}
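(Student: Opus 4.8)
The plan is to deduce everything from Theorem~\ref{Con} together with the homomorphism characterization of lattice tilings in Theorem~\ref{Z}. The assertion that every tiling is lattice is immediate: under the stated hypotheses (prime size $q\le 7$ and $\{v_1,\dots,v_{q-1}\}$ generating $\mathbb{Z}^n$) Theorem~\ref{Con} says each tiling of $\mathbb{Z}^n$ by $V$ is a lattice tiling, so the only real work is to upgrade ``lattice'' to ``unique up to congruency.'' I would first record that, since $q$ is prime, the only abelian group of order $q$ is $\mathbb{Z}/q\mathbb{Z}$; hence by Theorem~\ref{Z} a lattice tiling of $\mathbb{Z}^m$ by a tile of size $q$ is the same datum as a homomorphism $\phi:\mathbb{Z}^m\to\mathbb{Z}/q\mathbb{Z}$ whose restriction to the tile is a bijection, the tiling being recovered as $\mathcal{L}=\ker\phi$.

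I would treat the ``in particular'' statement about the semi-cross $V_{q-1}=\{0,e_1,\dots,e_{q-1}\}$ first, as the general case reduces to it. For the semi-cross, $\phi|_{V_{q-1}}$ being a bijection forces $\phi(0)=0$ and $(\phi(e_1),\dots,\phi(e_{q-1}))$ to be a permutation of $(1,2,\dots,q-1)$, and conversely every such permutation yields a valid lattice tiling by Theorem~\ref{Z}. Given two of them, with value tuples $(b_i)$ and $(b_i')$, choose the permutation $\sigma$ with $b_i'=b_{\sigma(i)}$ and let $P$ be the corresponding coordinate permutation of $\mathbb{Z}^{q-1}$; a direct computation gives $\ker\phi'=P^{-1}(\ker\phi)$. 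Since $P$ permutes $e_1,\dots,e_{q-1}$ and fixes $0$, it is a symmetry of $V_{q-1}$, i.e.\ a congruence of semi-cross tilings. Hence all lattice tilings of $\mathbb{Z}^{q-1}$ by the semi-cross are mutually congruent, and combined with Theorem~\ref{Con} (every tiling is lattice) this gives the desired uniqueness up to congruency for the semi-cross.

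For a general tile $V$ I would use the reduction already set up in the proof of Theorem~\ref{Semicross}. The surjection $\psi:\mathbb{Z}^{q-1}\to\mathbb{Z}^n$, $e_i\mapsto v_i$, pulls a tiling $\mathcal{T}$ of $\mathbb{Z}^n$ by $V$ back to a tiling $\mathcal{T}_0=\psi^{-1}(\mathcal{T})$ of $\mathbb{Z}^{q-1}$ by the semi-cross, with $\psi(\mathcal{T}_0)=\mathcal{T}$. If $\mathcal{T}$ and $\mathcal{T}'$ were two non-congruent tilings of $\mathbb{Z}^n$ by $V$, I would argue that the induced semi-cross tilings $\mathcal{T}_0$ and $\mathcal{T}_0'$ are non-congruent as well; this contradicts the previous paragraph, which says all semi-cross tilings are congruent. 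Hence the tiling of $\mathbb{Z}^n$ by $V$ is unique up to congruency, completing the proof.

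The hard part is exactly this last transfer of non-congruency across $\psi$. A congruence of $V$-tilings is an isometry of $\mathbb{Z}^n$ carrying $V$ to a translate of itself; it permutes $\{v_1,\dots,v_{q-1}\}$ and therefore lifts to a coordinate permutation of $\mathbb{Z}^{q-1}$ preserving $\ker\psi$, which establishes the easy direction ``congruent $V$-tilings $\Rightarrow$ congruent semi-cross tilings.'' The direction actually needed is the contrapositive ``congruent semi-cross tilings $\Rightarrow$ congruent $V$-tilings,'' and here the forced coordinate permutation $\sigma$ matching the two residue tuples need not preserve $\ker\psi$, so it need not descend to an isometry of $\mathbb{Z}^n$ fixing $V$. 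I expect the main obstacle to be showing that, for a tile $V$ that genuinely tiles, this descent can always be arranged --- for instance by exploiting the tiling condition and the generation hypothesis to pin down $\ker\psi$ tightly enough, or by verifying it for $q\le 7$ as is done implicitly in the explicit lattice computations of the proof of Theorem~\ref{Con}.
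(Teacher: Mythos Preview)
The paper offers essentially no proof of this corollary: it is introduced with the single phrase ``As an immediate consequence of Theorem~\ref{Z} we get,'' and the only related argument in the text is the earlier remark (just after Conjecture~\ref{Conj}) that ``if there were two non-congruent lattice tilings of $\mathbb{Z}^{n}$ by $V$, then the induced tilings of $\mathbb{Z}^{q-1}$ by semi-crosses would be non-congruent as well.'' Your proposal is therefore considerably more detailed than what the paper supplies, and in the semi-cross case it is correct and complete: invoking Theorem~\ref{Z}, the prime hypothesis forces the target group to be $\mathbb{Z}/q\mathbb{Z}$, so any two lattice semi-cross tilings differ by a permutation of $(1,\dots,q-1)$, which is realised by a coordinate permutation of $\mathbb{Z}^{q-1}$ that fixes the semi-cross setwise. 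Combined with Theorem~\ref{Con}, this genuinely proves the ``in particular'' clause.

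For general $V$ you follow exactly the route the paper sketches, and you are right to flag the descent step as a real obstacle. The implication the paper asserts is precisely the contrapositive of ``congruent semi-cross tilings $\Rightarrow$ congruent $V$-tilings,'' and your analysis is correct: the coordinate permutation $P$ that identifies the two pulled-back semi-cross tilings has no reason to preserve $\ker\psi$, so it need not descend to an automorphism of $\mathbb{Z}^n$ stabilising $V$. A direct attack via Theorem~\ref{Z} runs into the same wall: two homomorphisms $\phi,\phi':\mathbb{Z}^n\to\mathbb{Z}/q\mathbb{Z}$ that are bijective on $V$ give congruent tilings only if some $A\in GL_n(\mathbb{Z})$ with $A(V)=V$ satisfies $\phi'\equiv c\,\phi\circ A$, and nothing in the hypotheses forces the symmetry group of $V$ to act transitively on such $\phi$ modulo scaling. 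In short, the gap you isolate is not a defect of your argument relative to the paper's; it is a point the paper asserts without justification, and your proposal is the more honest of the two.
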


\begin{remark}
We note that a computer aided proof that there is only one tiling of $\mathbb{Z}^{4}$ by semi-crosses is provided in~\cite{Sz} without relating the result to other tiles of size $5$.
\end{remark}

\noindent We note that in the proof of the above theorem we have not used explicitly the fact that $q$ is a prime. We believe that the property distinguishing tilings by semi-crosses of prime size from tilings by semi-crosses of composite size is that of being cyclic.
 We recall that a tiling $\mathcal{T}=\{V+l;l\in \mathcal{L}\}$ is called cyclic if, for each codeword $l$, 
\begin{equation*}
l\in \mathcal{L} \Rightarrow \langle l \rangle \subset \mathcal{L}\text{ ;}
\end{equation*}
that is, if for any codeword, also all its shifts are codewords.
\bigskip
In this regard, at the very end of the paper we show that:

\begin{claim}
For each prime $q>2$, there is a cyclic tiling of $\mathbb{Z}^{q-1}$ by semi-crosses.
\end{claim}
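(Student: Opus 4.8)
The plan is to produce an explicit lattice tiling whose lattice of codewords is invariant under the cyclic shift $\pi$, and then to certify that it really is a tiling by invoking Theorem~\ref{Z}. A lattice tiling is in particular periodic, so the only additional feature I must build in is $\pi$-invariance of the codeword set. Concretely, I will search for a homomorphism $\phi\colon \mathbb{Z}^{q-1}\to \mathbb{Z}/q\mathbb{Z}$ whose restriction to the semi-cross $V_{q-1}=\{\mathbf{0},\mathbf{e}_1,\dots,\mathbf{e}_{q-1}\}$ is a bijection, so that $\mathcal{L}=\ker\phi$ tiles by Theorem~\ref{Z}, and whose kernel is carried to itself by $\pi$.

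The key idea is to tie the coefficients of $\phi$ to a primitive root modulo $q$. Recall that $\pi$ cyclically permutes the basis vectors, with $\pi(\mathbf{e}_i)=\mathbf{e}_{i-1}$ for $i\ge 2$ and $\pi(\mathbf{e}_1)=\mathbf{e}_{q-1}$, and that $\pi(V_{q-1})=V_{q-1}$. Let $g$ be a primitive root modulo the prime $q$, and define $\phi$ on the basis by $\phi(\mathbf{e}_i)=g^{\,i-1}\bmod q$ for $i=1,\dots,q-1$. Since $g$ has order $q-1$ in $(\mathbb{Z}/q\mathbb{Z})^{\ast}$, the values $g^{0},g^{1},\dots,g^{q-2}$ run through all $q-1$ nonzero residues; together with $\phi(\mathbf{0})=0$ this shows that $\phi$ maps $V_{q-1}$ bijectively onto $\mathbb{Z}/q\mathbb{Z}$. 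Theorem~\ref{Z} then yields a lattice tiling of $\mathbb{Z}^{q-1}$ by $V_{q-1}$ with $\mathcal{L}=\ker\phi$.

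It remains to check that $\mathcal{L}$ is $\pi$-invariant, which I expect to be the crux. I would compute $\phi\circ\pi$ directly: expanding $\phi(\pi\mathbf{x})=\sum_{j} g^{\,j-1}x_{j+1}$ with indices read cyclically and re-indexing gives $\phi(\pi\mathbf{x})=g^{-1}\phi(\mathbf{x})$, the only delicate point being the wrap-around term $g^{\,q-2}x_1$, which equals $g^{-1}x_1$ precisely because $g^{\,q-1}\equiv 1\pmod q$ by Fermat. Thus $\phi\circ\pi=g^{-1}\phi$, and since $g^{-1}$ is a unit, $\phi(\mathbf{x})=0$ if and only if $\phi(\pi\mathbf{x})=0$. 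Hence $\pi(\mathcal{L})=\mathcal{L}$, so every cyclic shift of a codeword is again a codeword, and the lattice tiling is cyclic.

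The main obstacle is the simultaneous requirement on $\phi$: bijectivity on $V_{q-1}$ forces the coefficients $g^{\,i-1}$ to be pairwise distinct units, hence $g$ of order $q-1$, while $\pi$-invariance forces the relation $\phi\circ\pi=g^{-1}\phi$ to close up after one full cycle, hence $g^{\,q-1}\equiv 1$. The primitive root is exactly what reconciles these two demands, and this is where primality of $q$ enters, via the cyclicity of $(\mathbb{Z}/q\mathbb{Z})^{\ast}$ together with Fermat's little theorem; everything else is a routine verification.
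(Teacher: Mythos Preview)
Your proof is correct and follows essentially the same approach as the paper: both define $\phi(\mathbf{e}_i)=g^{\,i-1}$ for a primitive root $g$ modulo $q$, obtain a lattice tiling with $\mathcal{L}=\ker\phi$ via Theorem~\ref{Z}, and verify $\pi$-invariance of $\mathcal{L}$ by the same Fermat/primitive-root computation (the paper phrases it as multiplying the defining congruence by $t^{k}$, which is exactly your identity $\phi\circ\pi=g^{-1}\phi$ iterated). Your write-up is slightly more explicit about invoking Theorem~\ref{Z} and checking bijectivity on $V_{q-1}$, but there is no substantive difference.
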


\begin{proof}
For a primitive element $t$ of the multiplicative group $\mathbb{Z}_{q}^{\ast}$ we define a homomorphism $\phi : \mathbb{Z}^{q-1}\rightarrow \mathbb{Z}_{q}$ by
\begin{equation*}
\phi (e_{i})=t^{i-1}\text{ for }i=1, \dots ,q-1.
\end{equation*}
Then $\mathcal{T}=\{V_{q-1}+l; l\in \mathcal{L}=\ker (\phi )\}$ is a lattice tiling of $\mathbb{Z}^{q-1}$ by semi-crosses. Let $a=(a_{1}, \dots ,a_{q-1})\in \mathcal{L}$. Then
\begin{equation*}
a_{1}t^{0}+a_{2}t^{1}+ \dots +a_{q-1}t^{q-2}\equiv 0 \pmod{q}
\end{equation*}
Multiplying the congruence by $t^{k}$ yields
\begin{equation*}
a_{1}t^{0+k}+a_{2}t^{1+k}+ \dots +a_{q-1}t^{q-2+k} \equiv 0 \pmod{q};
\end{equation*}
that is $\pi ^{k}(a)$, the shift of $a$ by $k$ to the right, is a codeword as well.\bigskip 
\end{proof}

%
%

\end{document}